\numberwithin{equation}{section}
\definecolor{cyan(process)}{rgb}{0.0, 0.72, 0.92}
\definecolor{columbiablue}{rgb}{0.61, 0.87, 1.0}
\definecolor{sandstone}{HTML}{786D5F}
\definecolor{beaublue}{rgb}{0.74, 0.83, 0.9}
\definecolor{cherryblossompink}{rgb}{1.0, 0.72, 0.77}
\definecolor{light-gray}{gray}{0.95}
\pgfplotsset{compat=1.16}
\newcommand{\vertiii}[1]{{\left\vert\kern-0.25ex\left\vert\kern-0.25ex\left\vert #1 \right\vert\kern-0.25ex\right\vert\kern-0.25ex\right\vert}}
\newcommand{\E}{\mathbb{E}}
\begin{document}

\title[Stochastic Oscillators out of equilibrium]{Stochastic Oscillators out of equilibrium:\\ scaling limits and correlation estimates}

\author[P. Gon\c{c}alves]{Patr\'{i}cia Gon\c{c}alves}
\address{Center for Mathematical Analysis, Geometry and Dynamical Systems, Instituto Superior T\'ecnico, Universidade de Lisboa}
\email{pgoncalves@tecnico.ulisboa.pt}

\author[K. Hayashi]{Kohei Hayashi}
\address{Department of Mathematics, Graduate School of Science, The University of Osaka \\
\emph{and} 
RIKEN Center for Interdisciplinary Theoretical and Mathematical Sciences}
\email{khayashi@math.sci.osaka-u.ac.jp}

\author[J. Mangi]{João Pedro Mangi}
\address{Instituto de matem\'atica pura e aplicada \\
\emph{and} Center for Mathematical Analysis, Geometry and Dynamical Systems, Instituto Superior T\'ecnico, Universidade de Lisboa}
\email{mangi.joao@impa.br}

\begin{abstract} 
We consider a purely harmonic chain of oscillators which is perturbed by a stochastic noise. Under this perturbation, the system exhibits two conserved quantities: the volume and the energy. At the level of the hydrodynamic limit, under diffusive  scaling, we show that depending on the strength of the Hamiltonian dynamics, energy and volume evolve according to either a system of autonomous heat equations or a non-linear system of coupled parabolic equations. Moreover,  for general initial measures, under diffusive scaling, we can characterize the non-equilibrium  volume fluctuations.  The proofs are based on precise bounds on the two-point volume correlation function and a uniform fourth-moment estimate. 
\end{abstract}

\keywords{Bernardin-Stoltz model, Harmonic oscillator, Hydrodynamic Limits, Non-equilibrium fluctuation, Ornstein-Uhlenbeck process, Edwards-Wilkinson universality}
\subjclass[2020]{60H15; 60J27; 82C22; 60K50}
\maketitle

\theoremstyle{plain}
\newtheorem{theorem}{Theorem}[section] 
\newtheorem{lemma}[theorem]{Lemma}
\newtheorem{corollary}[theorem]{Corollary}
\newtheorem{proposition}[theorem]{Proposition}

\theoremstyle{definition}
\newtheorem{definition}[theorem]{Definition}
\newtheorem{remark}[theorem]{Remark}
\newtheorem{assumption}[theorem]{Assumption}
\newtheorem{example}[theorem]{Example}


\makeatletter
\renewcommand{\theequation}{%
\thesection.\arabic{equation}}
\@addtoreset{equation}{section}
\makeatother

\makeatletter
\renewcommand{\p@enumi}{A}
\makeatother


\newcommand{\acks}{\textbf{Acknowledgements.}}

\newenvironment{acknowledgements}{%
\renewcommand{\abstractname}{Acknowledgements}
\begin{abstract}
}{%
\end{abstract}
}

\allowdisplaybreaks

\section{Introduction}
The derivation of the macroscopic evolution of conserved quantities in stochastic chains of oscillators has received a lot of attention in the last decade; see, e.g. \cite{bernardin2007hydrodynamics, bernardin2014anomalous, bernardin2012anomalous, gonccalves2014nonlinear, gonccalves2020derivation,jara2015superdiffusion, gonccalves2023derivation,komorowski2020kinetic}, just to mention a few. These are toy models for a variety of phenomena, and the rigorous understanding of their behavior is still very limited. 
Over the last years, by investigating the large-scale behavior, namely hydrodynamic limits and fluctuations of their conserved quantities, many important mathematical tools have been derived, which allow also for a mathematically rigorous treatment of super-diffusive systems.

\subsection*{The Bernardin-Stoltz model}

In this work, we consider the so-called Bernardin-Stoltz (BS) model introduced in \cite{bernardin2012anomalous}, which is a model that presents several analogies with the standard chains of oscillators, but has a simpler mathematical treatment. The dynamics of the chain of oscillators is a purely deterministic Hamiltonian dynamics given in terms of some potentials.  More precisely, let $U,V:\mathbb R \to \mathbb R_+$ be two potentials and consider the Hamiltonian system $\{(r_t,p_t):t\geq0\}$ whose time evolution is given by 
\begin{equation*}
\begin{cases}
\begin{aligned}
& \frac{dp_t(x)}{dt}=V'(r_t(x+1))-V'(r_t(x)), \\
& \frac{dr_t(x)}{dt}=U'(p_t(x+1))-U'(p_t(x)).
\end{aligned}
\end{cases}
\end{equation*}
Above, $p_t(x)$ is the momentum of particle $x$,  $r_t(x)=q_t(x)-q_t(x-1)$ is the deformation where $q_t(x)$ is its position, at time $t$. 
Moreover, $x$ is interpret as a discrete space variable - throughout the present paper, we consider a one-dimensional discrete torus $\mathbb T_N$ where $N\in\mathbb N$.   
The BS model $\{\eta_t:t\geq0\}$ is obtained by setting $U=V$, and, by mapping $\eta_t(x)$ to variables $p_t(x)$ and $r_t(x)$ in the following way: $\eta_t(2x)=p_t(x)$ and $\eta_t(2x-1)=r_t(x)$. From this, we get an $N$-system of oscillators whose (deterministic) time evolution is governed by the following Hamiltonian dynamics:
\begin{equation*}
\frac{d}{dt} \eta_t(x)
= V'\big(\eta_t(x+1)\big) 
- V'\big(\eta_t(x-1)\big)  .
\end{equation*}

In addition to the above deterministic evolution,  \cite{bernardin2012anomalous} introduced a noise effect where a randomly chosen pair of oscillators exchange their states after exponential random clocks that are dictated by Poisson clocks attached to nearest-neighbor chains. We assume that the rates of these Poisson clocks are uniform in space, so that the above interaction defines a nearest-neighbor, spatially homogeneous stochastic dynamics. 
Under this time evolution, the following quantities
\begin{equation*}
\sum_{x} \eta_t(x) \quad\text{and}\quad 
\sum_x V(\eta_t(x)),
\end{equation*}
which are referred to as \emph{total volume} and \emph{total energy}, respectively, are conserved.

A fundamental question is to derive the hydrodynamic limit for the two conserved quantities, i.e. the system of partial differential equations governing the space-time evolution of the volume and energy of the system and to understand the behavior of the fluctuations of the system around this limit. To that end, we scale the spatial variable by $1/N$ and accelerate the Hamiltonian dynamics (resp. exchange noise) by $N^{a-\kappa}$ (resp. $N^a$) with some $a\ge 0$ and $\kappa\ge 0$, and our goal is to describe the limiting macroscopic behavior as $N\to \infty$ associated with each conserved quantity, for each choice of the potential and the parameters $a$ and $\kappa$. 
Now we review some previous results regarding the limiting behavior of the BS model both at the level of the hydrodynamic limit, as well as the fluctuations of the microscopic system around this limit.

\subsection*{Scaling limits}
In \cite{bernardin2007hydrodynamics}, the hydrodynamic limit was obtained for the system of harmonic oscillators, but the perturbing noise is continuous, in which case the system presents a particular mathematical property called the \emph{fluctuation-dissipation} relation. However, we observe that the aforementioned property is not available in the BS model with the exchange noise.  At the level of fluctuations, many remarkable results were obtained at stationarity, i.e. when  the system starts from the Gibbs invariant measure. A significant result regarding the equilibrium fluctuations was obtained when the potential is purely harmonic, i.e., $V(\eta) =\eta^2/2$. Since there are two conservation laws, we note that the volume fluctuation behaves, under the diffusive time-scaling (i.e. $a=2$) as an Ornstein-Uhlenbeck process; though  the energy fluctuations cross from the  so-called $3/2$-L\'evy anomalous diffusion, observed in the time scale corresponding to $a=3/2$ and cross to the Ornstein-Uhlenbeck process when the intensity of the Hamiltonian is weaker when compared to the exchange dynamics. For the interested readers, we refer to \cite{bernardin20163, bernardin2018weakly, bernardin2018nonlinear, gonccalves2023derivation}, see also \cite{bernardin2018interpolation}. 

Another significant choice of the potential is the so-called Kac-van Moerbeke potential (or Toda lattice potential in some literature), i.e., $V(\eta)= e^{-b \eta}-1 + b\eta $ for some $b>0$. 
We note that in \cite{bernardin2014anomalous}  an anomalous behavior for the  energy fluctuations was observed. Later in \cite{ahmed2022microscopic} it was proved that the fluctuations for another quantity -  obtained from an appropriate linear combination of volume and energy - are given by the energy solution of the stochastic Burgers equation.
This suggests that if the nonlinearity of the potential is strong enough, there is a proper coupling of volume and energy, which belongs to the so-called Kardar-Parisi-Zhang universality class. 
More recently in \cite{gonccalves2023derivation}  the BS model was studied under a  generic potential, and by properly coupling the two conserved quantities, it was derived both the energy solution  of the stochastic Burgers equation, as well as, the $3/2$-L\'evy anomalous diffusion. 

Besides the equilibrium fluctuation of the BS model, although the results in the last decade revealed that it already includes intriguing phenomena, extension to the non-equilibrium fluctuation is undoubtedly a significant topic. 
Nevertheless, there are very few results on the non-equilibrium fluctuations of the BS model, and the main reason could rely on the fact that the model presents some pathologies for the present techniques available in the literature to be employed. These can be numbered as follows: first,  it is a Hamiltonian system with an unbounded state space; the variables can take values in $\mathbb R$; the two conserved quantities do not evolve autonomously and each can evolve in its own time scale; the Hamiltonian dynamics increases the degree of the input function while the exchange noise preserves it. 

\subsection*{Our results}
In this paper, we consider the BS model with a purely harmonic potential, i.e., $V(\eta)=\eta^2/2$, and  an exchange noise. We restrict ourselves to the one dimensional torus with $N$ sites. Moreover, we tune the intensity of the Hamiltonian dynamics by a factor $\alpha_N=\alpha N^{-\kappa}$ and we take the system in the diffusive time scale $N^2$. Above  $\alpha\in\mathbb R$ and $\kappa\geq 0$. We prove, for any $\kappa>1$, the hydrodynamic limit for volume and energy, and the evolution equations are given by two autonomous heat equations, namely:
\begin{equation*}
\begin{cases}
\partial_t v=\Delta v\\
\partial_t e=\Delta e.
\end{cases}
\end{equation*}
We also prove that in the critical case $\kappa=1$, the volume and energy evolve according to the system of equations:
\begin{equation*}
\begin{cases}
\partial_t v=\Delta v+2\alpha\nabla v\\
\partial_t e=\Delta e+2\alpha\nabla v^2.
\end{cases}
\end{equation*}

As we mentioned above, these models present several pathologies to fit the existing methods in the literature. The volume hydrodynamics holds in a stronger sense when compared to the energy hydrodynamics, since in the latter case we can prove results in expectation, while in the former, they hold in probability. In both cases, we do not have for granted bounds on the second moments of both volume and energy. Nevertheless,  we are able to derive a  fourth moment bound, obtained through an analysis of a discrete $\mathcal H^{-1}$-norm of the process.  The hydrodynamic limits for the volume are established under the entropy method. Since the variables take values in $\mathbb R$ we are then forced to consider the volume empirical measure  as an element in some Sobolev space. To make the presentation coherent we also consider the energy empirical measure  acting on the same Sobolev space. 

The reason why the hydrodynamic limit for the energy is presented only in expectation (in the case $\kappa=1$) is because the evolution equation involves a factor given in terms of the volume that has a non trivial limit in the same  time scale as the energy. This brings difficulties at the microscopic level in order to close the equation for the energy in terms of the square of the volume. 
Therefore, we prove that the convergence in expectation but we are also able to prove it in a much stronger sense, i.e. we prove that the discrete energy profile converges to the solution of the hydrodynamic equation in the $L^\infty$ sense and we quantify the rate of convergence in terms of the parameter $\kappa$ ruling the strength of the Hamiltonian dynamics. The main reason for this partial result  is the fact that the variables are real valued, and the usual techniques for proving the one- and two-block replacement lemmas do not apply in this case. To circumvent this, we embrace on the  analysis of higher-order correlation functions to replace the microscopic variables by their discrete macroscopic counterparts. This will be explained in more detail ahead. 

We also prove that the volume fluctuation field converges to a limit that we characterize through \eqref{eq:volume_limit_field} when the system starts from generic measures.  The main step of our proof is a precise estimate of the two-point correlation function for the volume. This estimate is obtained by first deducing an evolution equation for the correlation function written in terms of the generator of a random walk evolving on the two-dimensional discrete torus apart  diagonal points $x=y$. From Duhamel's principle we can then write the correlation function in terms of this random walk.  We note that in order to control the contribution of the  exchange dynamics to that evolution equation, we need to control the local time that the two-dimensional random stands at the line $y=x+1$ and this we can show to be of order $1/N$. At this point, the Hamiltonian dynamics brings more difficulties since it introduces extra terms in the equation that  require a much finer analysis. These are written in terms of energy and volume discrete profiles, but, while we do have good bounds for the volume discrete profile - as it solves the discrete heat equation and maximum principles are available -  we do not have apriori estimates for the energy discrete profile. To circumvent this difficulty, we derive an evolution equation for these extra terms in order to obtain good bounds. And again from Duhamel's principle we write and evolution equation in terms of a one-dimensional random walk. However again here the Hamiltonian dynamics brings the correlation function into the evolution equation for the energy profile. Our strategy is then to iterate the procedure and optimize our bounds on short times and long times by using sharp bound on the transition probabilities for the underlying one dimensional random walk.

We note that  the analysis of the evolution  equation of the correlation function to prove non-equilibrium fluctuations was deeply explored in the context of the exclusion process where the stirring process is used instead of the random walk, see for instance \cite{landim2008stationary,gonccalves2017nonequilibrium,franco2019non,gonccalves2020non,franceschini2023non}. 
However, as mentioned above, due to the presence of the Hamiltonian dynamics, the equation behaves quite differently from the ones of the exclusion process. 
We also note that the full characterization of the limit of the volume fluctuation field is not obtained here for the same reason as we do not obtain the hydrodynamics for the energy as we mentioned above.  The reason is that we are not able to characterize the law of the limit martingale apart when the system is at stationarity in which case we can show a strong convergence result for the quadratic variation which then characterizes the noise appearing in the equation of the volume field. In order to carry out this program to the general scenario we could rely on deriving four point correlation estimates. If it were not the Hamiltonian dynamics, this would be a simple problem, but the presence of the Hamiltonian dynamics brings extra difficulties: in one hand we cannot use the stirring process because our variables are unbounded and moreover, the random walks that appear when considering the corrrelation function for more than two points have non-homogeneous rates and move to non-nearest-neigbors and sharp bounds on their transition probabilities seem hard to obtain. We leave this as an open problem.

\subsection*{Related works and open problems} 
We note that a similar model of stochastic chains of oscillators has also been considered previously in the literature. In the sequel of papers \cite{komorowski2020fractional,komorowski2020kinetic,komorowski2021thermal, KOS2018macroscopic, komorowski2021hydrodynamic}, the authors studied the evolution of mechanical and thermal energy and showed a similar macroscopic behavior in the scaling limit. In \cite{jara2015superdiffusion}, it was also proved some super-diffusive behavior for the energy at equilibrium. At the technical level, their proof is based of the analysis of the Fourier-Wigner distributions, being completely different from the techniques used in the present paper. Moreover, the results proved in this article concerning the hydrodynamic limit are a generalization of the ones obtained in \cite{komorowski2021hydrodynamic,KOS2018macroscopic}.
We now discuss some other open problems concerning the harmonic BS model. One major concern not only in the context of non-equilibrium fluctuations but also at the level of hydrodynamics and other questions in statistical mechanics is the quantification of the decay of higher-order correlation functions. In this article, we give a first step in this direction as we prove that the 2-point correlation function for the volume decays as $1/N$. The analogue result for the energy is left open. Also higher order correlation estimates are left open. As we mentioned above, the main difficulty is to deal with the contribution of the Hamiltonian dynamics in the evolution equations for these functions and to analyze the corresponding transition probabilities of the underlying random walks. We note that in the case of the exclusion process, the situation is simpler as one can write the evolution equations in terms of the stirring process that then can be compared to independent particles and from the local CLT one can then use heat kernel estimates. Here, the situation is more intricate because of the presence of the Hamiltonian dynamics.

Now, in what concerns energy correlations, the picture is much poorly understood. We note that our strategy would give some hierarchical structure for the evolution equation of the energy correlation functions, albeit other quantities (not conserved) would appear for which we do not have any control. We also leave this as an open problem. We note that when carrying out this program for estimating the $k$ point correlation function it would be an interesting problem to deduce rates of convergence in the same spirit as recently done for the exclusion in \cite{gess2024quantitative}.

\subsection*{Organization of the paper} 
The paper is organized as follows. In Section \ref{sec:model main results} we define the model and present the statements of the main results of the article, namely Theorems \ref{thm:HDL weak regime}, \ref{thm:HDL_volume} and \cref{thm:HDL_energy} for the hydrodynamic limit, and \ref{thm:main fluctuations} for the non-equilibrium fluctuation.
Then, the forthcoming \cref{sec:hydrodynamics} (resp. \cref{sec:fluctuations}) will be devoted to show the hydrodynamics limit (resp. fluctuations).
To show the main theorems, we have two crucial estimates: fourth-moment bound and correlation estimates, which are listed in \cref{thm:fourth-moment_bound} and \cref{lem:correlation_estimate}, respectively. 
One of them is a fourth-moment bound of the process, which allows for instance to control the quadratic variation of the energy of the process and this is an important step in the proof of Theorems \ref{thm:HDL weak regime} and \ref{thm:HDL_volume}. 
The proof of the fourth-moment bound is postponed to \cref{sec:moment bound}.
On the other hand, we show the volume-volume correlation estimate in Section \ref{sec:two-point_correlation_estimate}.  
This is the main ingredient in the proof of Theorems \ref{thm:HDL_volume} and \ref{thm:main fluctuations}, and is also a significant contribution of the paper. 
In the end, for completeness for the readers, we give some explicit computations involving the generator of the process and some random walk estimates in Appendix.

\subsection*{Notation}
Let $\mathbb T_N = \mathbb Z/ (N\mathbb Z)\cong\{0,\ldots, N-1\}$ be the one-dimensional discrete torus and let $\mathbb T=\mathbb R/\mathbb Z\cong [0,1)$ be the usual one-dimensional unit torus.   
For each real sequence $g = (g_x)_{x\in\mathbb T_N}$, define the following discrete operators:
\begin{equation}
\label{eq:discrete_operators_definition}
\begin{aligned}
\nabla^N g_x = N(g_{x+1}-g_x),\quad
\widetilde{\nabla}^N g_x = \frac{N}{2}(g_{x+1}-g_{x-1}) .
\end{aligned}
\end{equation}
Moreover, for any integer $d\ge 1$, the $d$-dimensional Laplacian is defined as follows: 
\begin{equation*}
\Delta^{(d)}_N g(x) 
= N^2 \sum_{i=1}^{2d} \big( g(x+e_i)-g(x) \big) 
\end{equation*}
for each $x\in\mathbb T^d_N$, where $(e_i)_{i=1,\ldots,d}$ stands for the standard unit vectors and we set $e_{i+d}=-e_i$. 
Let us use the short-hand notation for the one-dimensional case: $\Delta^N=\Delta_N^{(1)}$. 
We denote by $L^2(\mathbb T)$ the space of square integrable measurable functions, and denote its norm and scalar product, respectively, by $\|\cdot\|_2$ and $\langle\cdot,\cdot\rangle$.
Finally, we note here that we use the letter $C$ to denote a constant appearing in estimates as multiplication, and we use the same letter for all inequalities by abuse of notation, so that the constant $C$ may be different from line to line.   

\subsection*{List of random walks}
As a main ingredient of this paper, we will present some estimates on several random walks. 
Here, in order to clarify the notation, let us list up three random walks that we will analyze in the main part. 
\begin{itemize}
\item Let $\{X^{(2)}_t:t\ge 0\}$ be a two-dimensional random walk, which is reflected on the diagonal line and generated by an operator $\mathscr L^{(2)}_n$ (see \eqref{eq:2d_RW_operator_definition} for its definition) on the two-dimensional torus without the diagonal line, that is, a space $V^{(2)}_N$ (see \eqref{eq:2d_torus_minus_diagonal_definition}) starting from any $(x,y)\in V^{(2)}_N$. The corresponding probability measure is denoted by $\mathbf P_{(x,y)}$ and we write $\mathbf E_{(x,y)}$ for the expectation with respect to $\mathbf P_{(x,y)}$.
\item We will map the two-dimensional random walk $\{X^{(2)}_t: t\ge 0\}$ onto a one-dimensional random walk evolving in the space $\Lambda_{N-1}=\{0,\ldots, N-1\}$. This projected random walk will be denoted by $\{\widetilde X^{(1)}_t: t\ge 0\}$ and its generator by $\mathfrak L^{(1)}_N$ (see \eqref{eq:1d_RW_after_projection_generator} for the definition), starting from any $x\in \Lambda_{N-1}$. 
\item Finally, let $\{ Y^{(1)}_t:t\ge0 \}$ be a one-dimensional random walk on the torus $\mathbb T_N$, generated by the one-dimensional Laplacian $\Delta^{(1)}_N$, starting from any $x\in\mathbb T_N$. The associated probability measure (resp. its expectation) is denoted by $P_x$ (resp. $E_x$).
\end{itemize}
Note that all random walks we listed above are accelerated by $N^2$, although we omit the dependency on it by a bit of abuse of notation, just for simplification.

\section{Model description and main results}
\label{sec:model main results}
\subsection{The microscopic model}
\label{subsec:model}
Here we detail properties of the dynamics that we will analyze. 

\subsubsection{The dynamics}
In what follows, we consider the dynamics on the torus $\mathbb T_N$ as in \cite{ahmed2022microscopic}.
Let $\Omega_N=\mathbb{R}^{\mathbb{T}_N}$ denote the space of oscillators and we will denote elements of $\Omega_N$ by $\eta=\{\eta(x)\}_{x\in\mathbb T_N}$. 
In what follows, we will consider a Markov process $\{\eta_t(x)\}_{t\ge0}$ defined on $\Omega_N$. 
For that purpose, now let us describe the generator of the process. 
For a smooth bounded function $f:\Omega_N\to\mathbb{R}$, we define  
\begin{equation*}
Sf(\eta)=\sum_{x\in\mathbb{T}_N} \big( f(\eta^{x,x+1})-f(\eta)\big) 
\end{equation*}
and 
\begin{equation*}
Af(\eta)=\sum_{x\in\mathbb{T}_N}\big( \eta(x+1)-\eta(x-1) \big) \partial_xf(\eta)
\end{equation*}
where $\partial_x=\partial/\partial \eta(x)$ denotes the partial derivative with respect to $\eta(x)$, and $\eta^{x,y}$ denotes the configuration in $\Omega_N$ given by
\begin{equation*}
\eta^{x,y}(z)=
\begin{dcases}
\eta(x) \quad\text{if }z=y, \\
\eta(y) \quad\text{if }z=x, \\
\eta(z) \quad\text{if }z\neq x,y
\end{dcases}
\end{equation*}
for each $x,y\in\mathbb T_N$ and each $\eta\in\Omega_N$.  
Then, we define the operator $L_N$ by
\begin{equation*}
L_N=N^2S + \alpha_N N^2A
\end{equation*}
where
\begin{equation}
\label{eq:weak_asymmetry}
\alpha_N = \alpha N^{-\kappa}
\end{equation}
for some $\alpha >0$ and $\kappa>0$. 
Note that the parameter $\alpha_N$ is vanishing as $N\to\infty$ and it introduces the \textit{weak asymmetry} in the dynamics. Moreover, note that the process has two conservation laws, namely, 
\begin{equation*}
\sum_{x\in\mathbb T_N}\eta(x) 
\quad \text{ and }\quad 
\sum_{x\in\mathbb T_N}\eta(x)^2,
\end{equation*}
which are referred to as the total volume and energy, respectively. \par
For a metric space $\mathbb{X}$, let $D([0,T],\mathbb{X})$ denote the space of right continuous and with left limits paths from $[0,T]$ to $\mathbb{X}$ endowed with the Skorohod topology and the associated Borel $\sigma-$algebra. From now on we will denote by $\{\mu_N\}_{N\in\mathbb N}$ any sequence of probability measures on the state space $\Omega_N$ and $\{ \eta_t(x):x\in\mathbb T_N, t\ge 0 \}$ denotes the Markov process on $\Omega_N$ with generator $L_N$, starting from $\mu_N$, where we omit the dependence on $N$ by abuse of notation for the process.  
Let $\mathbb P_N$ denote the probability measure on $D([0,T],\Omega_N)$ induced by the continuous-time Markov process associated to the generator $L_N$ and the initial measure $\mu_N$. Denote by $\mathbb E_N$ the expectation with respect to the measure $\mathbb P_N$.

\subsubsection{Invariant measures}
The process $\{\eta_t(x):t\ge 0, x\in\mathbb T_N\}$ has a family of invariant measures $\{\mu_{\rho,\beta}:\rho\in\mathbb R,\beta>0\}$ where $\mu_{\rho,\beta}$ is a product of Gaussian measures whose common marginal satisfies  
\begin{equation*}
\mu_{\rho,\beta}(\eta(x) \le y)
=
\int_{-\infty}^y \sqrt{\frac{\beta}{2\pi}}\exp{\Big(-\frac{\beta}{2}(z-\rho)^2\Big)}  dz  
\end{equation*}
for each $x\in\mathbb T_N$ and $y\in\mathbb R$.  
Clearly, under the stationary measure we have for any $x\in\mathbb T_N$ that 
\begin{equation*}
E_{\mu_{\rho,\beta}}[\eta(x)] = \rho,\quad
E_{\mu_{\rho,\beta}}[\eta(x)^2] = 1/\beta + \rho^2. 
\end{equation*}

\subsection{Main results}
Now that we have introduced the dynamics, we present our main results. We start by describing the possible initial measures of the process.

\subsubsection{Assumptions on the initial measure}
In order to properly describe the generality of our results, we introduce now the discrete volume and energy profiles, which are defined for each $t \ge 0$ and $x\in\mathbb T_N$, by 
\begin{equation}
\label{eq:element_definition}
v^N_t(x) 
= \mathbb{E}_N[\eta_t(x)],\quad
e^N_t(x) 
= \mathbb E_N[\eta_t(x)^2]. 
\end{equation}
We also introduce another function that will be of fundamental importance in our proofs.
For $t \ge 0$ and $x,y\in\mathbb Z$, let $\varphi_t^N(x,y)$ denote the two-point correlation function for the volume which is given by
\begin{equation}
\label{eq:correlation_volume_definition}
\varphi^N_t(x,y)
=\E_N[\overline\eta_t(x)\overline\eta_t(y)]
=\E_N[\eta_t(x)\eta_t(y)]- v_t^N(x) v_t^N(y)
\end{equation} 
for any $(x,y)\in V_N^{(2)}$ where we set 
\begin{equation}
\label{eq:2d_torus_minus_diagonal_definition}
V_N^{(2)}
= \{ (x,y)\in\mathbb T_N^2 \, :\, 
x \not\equiv y \, (\text{mod } N) \} .
\end{equation}
Above the notation $\overline\eta_t(x)$ refers to the centered random variable, i.e. $\overline\eta_t(x)\coloneqq \eta_{t}(x)- v_t^N(x)$ in the above case. 
We present now the conditions that we need to impose on the sequence of initial measures $\mu_N$. 

\begin{assumption}
\label{assump:initial_measure}
There exist a positive constant $C>0$ such that 
\begin{equation}
\label{eq:assum_H-1norm}
\limsup_{N\to\infty}\mathbb E_N
\big[\|\eta^2_0\|^2_{-1,N}\big]
\le C,
\end{equation}
\begin{equation}
\label{eq:assumption_initial_measure_volume}
\begin{aligned}
\limsup_{N\to\infty} \max_{x\in\mathbb T_N} \big\{ 
|v^N_0(x)| \vee |\nabla^N v^N_0(x)|
\vee | \Delta_N^{(1)} v^N_0(x)|
\big\} 
\le C,
\end{aligned}
\end{equation}
\begin{equation}
\label{eq:assumption_initial_meaure_energy}
\limsup_{N\to\infty}\max_{x\in\mathbb T_N}\big\{ |e^N_0(x)|\vee 
|\nabla^N e^N_0(x)| \big\} \le C, 
\end{equation}
and 
\begin{equation}
\label{eq:assumption_initial_measure_correlation}
\limsup_{N\to\infty} \max_{(x,y)\in V_N^{(2)}} |\varphi^N_0(x,y)|
\le C/N 
\end{equation}
where in the first assumption the norm $\|\cdot\|_{-1,N}$ will be defined in \eqref{eq:h-1norm_expression}. 
\end{assumption}

\begin{example}
Here we give an example of initial measures satisfying all items in Assumption \ref{assump:initial_measure}.
Let us consider the following Gaussian product measure whose common marginal satisfies  
\begin{equation}
\label{eq:product_measure_associated_to_continuous_profile}
\mu_N(\eta(x) \le y)
= \int_{-\infty}^y \frac{1}{\sqrt{2\pi\chi_0(x/N)}}
\exp{\Big(-\frac{(z-v_0(x/N))^2}{2\chi_0(x/N)}\Big)}  dz  
\end{equation}
for each $x\in\mathbb T_N$ and $y\in\mathbb R$ where $\chi_0(\cdot)=e_0(\cdot)-v_0(\cdot)^2$. 
Above, $v_0:\mathbb T\to\mathbb T$ and $e_0:\mathbb T\to (0,\infty)$ are bounded measurable functions such that $\chi_0(u)> 0$ for any $u\in\mathbb T$, to make the measure $\mu_N$ well-defined. 
Moreover, if we impose further that $v_0,e_0\in C^2_b(\mathbb T)$, then we can easily check that all items in Assumption \ref{assump:initial_measure}
are satisfied.
\end{example}

\subsubsection{Hydrodynamic limit}
Given a configuration $\eta\in\Omega_N$, we associate to it the empirical measure for each conservation law,  i.e.  the empirical measures of volume and energy defined, respectively, by
\begin{equation*}
\pi^{v,N}_t(du)
= \frac{1}{N} \sum_{x\in\mathbb T_N} 
\eta_t(x) \delta_{x/N} (du)
\quad \textrm{and}\quad 
\pi^{e,N}_t(du)
= \frac{1}{N} \sum_{x\in\mathbb T_N} 
\eta_t(x)^2 \delta_{x/N} (du)
\end{equation*}
where $u\in\mathbb T$ and $\delta_u$ stands for the Dirac measure at $u$.
For any continuous function $G:\mathbb T\to\mathbb R$, let us denote its integral with respect to the empirical measure by $\langle\pi^{\sigma,N}_t,G\rangle$ for each $\sigma\in \{v,e\}$. 
Given the fact that the volume element $\eta$ takes values in $\mathbb R$, the corresponding empirical measure takes values in general on the space of signed measures. Since a good tightness criteria in the space of unbounded signed measures is lacking, we are forced to discuss the convergence of these measures in a different topology. This is also the case, for example, for the empirical currents in some particle systems as mentioned in \cite{bertini2023concurrent} for the case of the exclusion process. Here let us interpret not only the volume empirical measure, but also the energy empirical measure as processes taking values in Sobolev spaces, which we recall as follows.

For each integer $z$, let a function $h_z$ be defined by 
\begin{equation}
\label{eq:sobolev_CONS_definition}
h_z(u)
= \begin{cases}
\begin{aligned}
& \sqrt 2\cos(2\pi zu) &&\text{ if } z>0, \\
& \sqrt 2\sin(2\pi zu) &&\text{ if } z<0, \\
& 1 &&\text{ if } z=0.
\end{aligned}
\end{cases}
\end{equation}
Then the set $\{h_{z},z\in\mathbb Z\}$ is an orthonormal basis of $L^{2}(\mathbb T)$. 
Consider in $L^{2}(\mathbb{T})$ the operator $1-\Delta$, which is linear, symmetric and positive. A simple computation shows that
$(1-\Delta) h_{z}=\gamma_{z}h_{z}$ where $\gamma_{z}=1+4\pi^2 z^2$.
For an integer $m\geq{0}$, denote by $H^m(\mathbb T)$ the Hilbert space induced by $C^\infty(\mathbb T)$ and the scalar product $\langle\cdot,\cdot\rangle_{m}$ defined by $\langle f,g\rangle_{m}=\langle f,(1-\Delta)^{m}g\rangle$, where recall that $\langle\cdot,\cdot\rangle$ denotes the inner product of
$L^{2}(\mathbb{T})$ and denote by $H^{-m}(\mathbb T)$ the dual of $H^m(\mathbb T)$ relatively to this inner product $\langle\cdot,\cdot\rangle_m$. 
Denote by $D(\mathbb R_+,H^{-m}(\mathbb T))$  the space of $H^{-m}(\mathbb T)$-valued functions, which are right-continuous and  with left limits, endowed with the Skorohod topology. 
Now, let us denote by $Q^{v,N}_m$(resp. $Q^{e,N}_m$) the measure on $D([0,T],H^{-m}(\mathbb T))$ associated with the empirical measure $\pi^{v,N}$ (resp. $\pi^{e,N}$) and the initial measure, i.e., 
\begin{equation}
\label{eq:sobolev_empirical_measure_definition}
Q^{v,N}_m=\mathbb P_N\circ(\pi^{v,N})^{-1} \quad\text{and} \quad
Q^{e,N}_m=\mathbb P_N\circ(\pi^{e,N})^{-1}. 
\end{equation}
We will prove in \cref{sec:hydrodynamics} that the empirical measure $\pi_t^{v,N}$ (resp. $\pi_t^{e,N}$) converges, in a sense to be precised later, to a deterministic measure $\pi_t^v$ (resp. $ \pi_t^e$).
Then, we will see that the limiting measures are absolutely continuous with respect to the Lebesgue measure, i.e., $\pi_t^{v}(du)=v(t,u)du$ and $ \pi_t^{e}(du)=e(t,u)du$, and the densities evolve according to a system of hydrodynamic equations.
It turns out that $\kappa=1$ is critical where the limiting equations are coupled in volume and energy: 
\begin{equation}
\label{eq:hdl}
\begin{cases}
\begin{aligned}
& \partial_tv
= \Delta v
+ 2\alpha \nabla v \\
& \partial_t e =\Delta e + 2 \alpha\nabla v^2 \\
& v(0,\cdot)= v_0(\cdot), \quad e(0,\cdot)=e_0(\cdot)
\end{aligned}
\end{cases}
\end{equation}
with some initial profile $v_0$ and $e_0$, where recall from \eqref{eq:weak_asymmetry} that $\alpha$ is the parameter controlling the intensity of the Hamiltonian part.
On the other hand, when $\kappa>1$, the limiting equations are no longer coupled and become the classical heat equations, which are obtained by setting $\alpha=0$ in \eqref{eq:hdl}.
We introduce now the precise sense in which we refer to as a solution to the previous equations.

\begin{definition}
Fix a pair of bounded measurable functions $v_0,e_0:\mathbb T\to \mathbb R$. 
We say that a pair of measurable functions $(v,e)=(v(t,u), e(t,u)) \in L^2([0,T]\times \mathbb T)^2$ is a weak solution of \eqref{eq:hdl} if for every $G\in C^{2}(\mathbb T)$, it holds that 
\begin{equation*}
\begin{aligned}
\int_{\mathbb T}v(t,u)G(u)du - \int_{\mathbb T}v_0(u) G(u)du 
+ \int_0^t\int_{\mathbb T}v(s,u) 
\big(\Delta G(u) - 2\alpha\nabla G(u)\big) du ds = 0
\end{aligned}
\end{equation*}
and 
\begin{equation*}
\begin{aligned}
& \int_{\mathbb T}e(t,u)G(u)du - \int_{\mathbb T}e_0(u) G(u)du \\
&\quad
+ \int_0^t\int_{\mathbb T}e(s,u) 
\Delta G(u) du ds 
- \int_0^t\int_{\mathbb T} v(s,u)^2 \nabla G(u) du ds = 0.
\end{aligned}
\end{equation*}
\end{definition}

It follows from standard arguments that the system \eqref{eq:hdl} has a unique weak solution. 
Indeed, we may notice that the first equation for the volume is decoupled from the other equation for the energy, and that one is  the classical heat equation with a drift term, so that we can easily show the uniqueness of weak solutions. 
Then, the second equation for the energy can be viewed as decoupled where the nonlinear term $v^2$ is considered to be given, and thus the uniqueness of the second equation is trivial as well.    

Next, let us recall the notion of association to a profile for the empirical measures. 

\begin{definition}
Let $\{\mu_N\}_{N\in\mathbb N}$ be a sequence of probability measures in $\Omega_N$ and let $v_0$ and $e_0$ be measurable real-valued functions defined on $\mathbb T$.
We say the sequence $\{\mu_N\}_{N\in\mathbb N}$ is associated to $(v_0,e_0)$ if for every real-valued continuous function $G$ on $\mathbb T$ and for every $\delta>0$ we have that 
\begin{equation*}
\lim_{N\to\infty}\mu_N\bigg( \eta\in\Omega_N \,;\,  
\Big| \frac{1}{N} \sum_{x\in\mathbb T_N} \eta(x) G\Big(\frac xN\Big) - \int_{\mathbb{T}}G(u)v_0(u)du \Big| > \delta\bigg) =0
\end{equation*}
and 
\begin{equation*}
\lim_{N\to\infty}\mu_N\bigg( \eta\in\Omega_N \,;\,  
\Big| \frac{1}{N} \sum_{x\in\mathbb T_N} \eta(x)^2 G\Big(\frac xN \Big) - \int_{\mathbb{T}}G(u)e_0(u)du \Big| > \delta\bigg) =0.
\end{equation*}
\end{definition}

Now, let us state our main theorem for the hydrodynamic limit. 

\begin{theorem}
\label{thm:HDL weak regime} 
Let $v_0:\mathbb T\to \mathbb R$ and $e_0:\mathbb T\to (0,\infty)$ be bounded measurable functions.
Let $\{\mu_N\}_{N\in\mathbb N}$ be a sequence of probability measures on $\Omega_N$ associated to the pair $(v_0,e_0)$, and assume that $\{\mu_N\}_{N\in\mathbb N}$ satisfies Assumption \ref{assump:initial_measure}.  
Moreover, assume $\kappa>1$ where $\kappa$ is the constant appearing in \eqref{eq:weak_asymmetry}.  
Then, for every $t\in [0,T]$, the sequence of empirical measures $\{(\pi^{v,N}_t, \pi^{e,N}_t)\}_{N\in\mathbb N}$ converges in probability to the absolute continuous measure $(v(t,u)du, e(t,u)du)$, that is, for any $\delta>0$ and for any $G\in C^2(\mathbb{T})$ we have 
\begin{equation*}
\lim_{N\to\infty}
Q^{\sigma,N}_m 
\bigg(\Big|\langle \pi^{\sigma,N}_{t},G\rangle-\int_{\mathbb T} \sigma(t,u)G(u)du \Big|>\delta\bigg) =0 
\end{equation*}
for each $\sigma\in \{e,v\}$ provided $m>5/2$, where recall that the measure $Q^{v,N}_m$ and $Q^{e,N}_m$ were defined for processes taking values on Sobolev space $H^{-m}(\mathbb T)$, see \eqref{eq:sobolev_empirical_measure_definition}.  
Moreover, the pair $(v(t,u), e(t,u))$ is the unique weal solution of \eqref{eq:hdl} under $\alpha=0$ with initial profile $(v_0,e_0)$.  
\end{theorem}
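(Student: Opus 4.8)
The plan is to run the martingale formulation of the hydrodynamic limit, but in the negative Sobolev space $H^{-m}(\mathbb T)$ with $m>5/2$, since the real-valued variables turn $\pi^{v,N}$ (and, for uniformity, also $\pi^{e,N}$) into genuinely signed objects for which the usual vague tightness is unavailable. For the orthonormal basis elements $h_z$, I would first write the Dynkin decomposition
\[
\langle\pi^{\sigma,N}_t,h_z\rangle - \langle\pi^{\sigma,N}_0,h_z\rangle
= \int_0^t L_N\langle\pi^{\sigma,N}_s,h_z\rangle\,ds + M^{\sigma,N}_t(h_z),
\qquad \sigma\in\{v,e\},
\]
and compute the action of $L_N=N^2S+\alpha_NN^2A$. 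The symmetric part $N^2S$ produces exactly the discrete Laplacian on the corresponding conserved density, namely $\langle\pi^{v,N}_s,\Delta^Nh_z\rangle$ and $\langle\pi^{e,N}_s,\Delta^Nh_z\rangle$; crucially there is no local function to replace here, since the diffusive part of each equation is linear in its conserved density. The antisymmetric part $\alpha_NN^2A$ produces a drift of order $\alpha_NN=\alpha N^{1-\kappa}$: it equals $-2\alpha N^{1-\kappa}\langle\pi^{v,N}_s,\widetilde{\nabla}^Nh_z\rangle$ for the volume and $-2\alpha N^{1-\kappa}\cdot\frac1N\sum_x\eta_s(x)\eta_s(x+1)\nabla^Nh_z(x/N)$ for the energy. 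Here is the only place the assumption $\kappa>1$ is used: both drifts then vanish in $L^2(\mathbb P_N)$, the energy one because $\mathbb E_N[(\frac1N\sum_x\eta_s(x)\eta_s(x+1)\nabla^Nh_z)^2]\le C$ by Cauchy--Schwarz and the uniform fourth-moment bound of \cref{thm:fourth-moment_bound}, so the whole term is $O(N^{1-\kappa})$. Next I would show the martingale is negligible: since $A$ is a derivation its contribution to the carr\'e du champ vanishes identically, while the $S$-part gives $\mathbb E_N[\langle M^{\sigma,N}(h_z)\rangle_t]\le \frac{C}{N}\int_0^t\frac1N\sum_x\mathbb E_N[(\eta_s(x+1)^k-\eta_s(x)^k)^2]\,ds$ with $k=1$ for the volume and $k=2$ for the energy; in both cases this is $O(1/N)$ --- for the energy precisely by \cref{thm:fourth-moment_bound} --- so $M^{\sigma,N}_t(h_z)\to0$ in $L^2$.

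For tightness of $\{\pi^{\sigma,N}\}_N$ in $D([0,T],H^{-m}(\mathbb T))$ I would invoke Mitoma's criterion, which reduces it to the tightness of each real-valued process $\langle\pi^{\sigma,N}_\cdot,h_z\rangle$ (obtained from the Dynkin decomposition together with Aldous' criterion, using the drift and martingale estimates above) and to a uniform bound $\sup_N\mathbb E_N[\|\pi^{\sigma,N}_t\|_{-m}^2]<\infty$. For the latter one checks $\mathbb E_N[\langle\pi^{\sigma,N}_t,h_z\rangle^2]\le C$ uniformly in $z$ and $N$ --- the off-diagonal contributions bounded by $|\mathbb E_N[\eta_t(x)^a\eta_t(y)^a]|\le C$ (Cauchy--Schwarz plus \cref{thm:fourth-moment_bound}, $a\in\{1,2\}$), the diagonal one carrying an extra $1/N$ --- and then $\mathbb E_N[\|\pi^{\sigma,N}_t\|_{-m}^2]\le C\sum_z\gamma_z^{-m}<\infty$. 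The index $m$ enters through the generator term in the time-integral part of the Dynkin decomposition, which brings a factor $\|\Delta^Nh_z\|_\infty^2\sim z^4$; making $\sum_z\gamma_z^{-m}z^4$ converge forces $m>5/2$.

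It then remains to identify the limit points and to use uniqueness. Passing to the limit in the Dynkin identity --- using $\Delta^Nh_z\to\Delta h_z$ uniformly, the vanishing of the martingale and of the $O(N^{1-\kappa})$ drift, and $\Delta h_z=-4\pi^2z^2h_z$ --- shows that any limit point is concentrated on trajectories satisfying $\langle\pi^\sigma_t,h_z\rangle=e^{-4\pi^2z^2t}\langle\pi^\sigma_0,h_z\rangle$ for all $z$ and $t$, while the hypothesis that $\{\mu_N\}_{N\in\mathbb N}$ is associated to $(v_0,e_0)$ identifies $\pi^\sigma_0$ as $\sigma_0(u)\,du$. Hence $\pi^\sigma_t(du)=(e^{t\Delta}\sigma_0)(u)\,du$, which is absolutely continuous for $t>0$ and is the unique weak solution of \eqref{eq:hdl} with $\alpha=0$; in particular the limit point is a Dirac mass, so $Q^{\sigma,N}_m$ converges in distribution --- and hence, the limit being deterministic, in probability --- to this trajectory. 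To pass from the basis functions $h_z$ to a general $G\in C^2(\mathbb T)$ one expands $\langle\pi^{\sigma,N}_t,G\rangle$ in the Fourier basis and truncates, the tail being controlled uniformly in $N$ by $|\widehat G_z|\lesssim z^{-2}$ together with $\sup_N\mathbb E_N[\langle\pi^{\sigma,N}_t,h_z\rangle^2]\le C$.

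The main obstacle is the complete absence of \emph{a priori} second-moment control: the scheme closes only thanks to the uniform fourth-moment bound \cref{thm:fourth-moment_bound}, whose own proof --- carried out separately in \cref{sec:moment bound} through a discrete $\mathcal H^{-1}$ analysis --- is the genuinely hard input. Granting it, the bracket of the energy martingale (which involves $\mathbb E_N[(\eta(x+1)^2-\eta(x)^2)^2]$) and the $L^2$-size of the energy drift (which involves $\mathbb E_N[\eta(x)\eta(x+1)\eta(x')\eta(x'+1)]$) both become controllable, and the remaining work --- the tightness in the negative Sobolev space with the bookkeeping in $z$ that fixes $m>5/2$, and the identification and uniqueness, which are easy here because the limiting system is decoupled and linear --- is of a routine nature.
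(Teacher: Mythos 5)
Your overall architecture coincides with the paper's: Dynkin decomposition per Fourier mode $h_z$, tightness in $D([0,T],H^{-m}(\mathbb T))$ with the constraint $m>5/2$ coming from the $z^4$ factor of $\Delta^N h_z$, the fourth-moment bound as the key input for the energy, vanishing of the $O(\alpha_N N)=O(N^{1-\kappa})$ drift when $\kappa>1$, and conclusion by uniqueness. Where you genuinely diverge is in the identification of limit points. The paper first proves absolute continuity of any limit point by an $L^2$-in-Fourier argument (Fatou plus the fourth-moment bound, \cref{lem:absolute_continuity}), then shows concentration on weak solutions and invokes uniqueness of the weak solution. You instead exploit that $h_z$ are eigenfunctions of $\Delta$, close the limiting Dynkin identity into the scalar ODE $\langle\pi^\sigma_t,h_z\rangle=\langle\pi^\sigma_0,h_z\rangle-4\pi^2z^2\int_0^t\langle\pi^\sigma_s,h_z\rangle ds$, and read off $\pi^\sigma_t=e^{t\Delta}\sigma_0\,du$. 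For this linear, decoupled system that is a legitimate and arguably cleaner route: it delivers absolute continuity for free and bypasses \cref{lem:absolute_continuity} entirely. You should still note that passing to the limit in the time-integral term requires the limit points to charge only continuous trajectories (or an equivalent continuity argument for the integral functional on Skorohod space), which the paper secures through the jump estimates in its tightness lemmas.

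There is one step that is misjustified as written. For the uniform bound $\sup_N\mathbb E_N[\|\pi^{\sigma,N}_t\|_{-m}^2]<\infty$ you invoke pointwise-in-time moment bounds $|\mathbb E_N[\eta_t(x)^a\eta_t(y)^a]|\le C$ "by Cauchy--Schwarz plus \cref{thm:fourth-moment_bound}". But \cref{thm:fourth-moment_bound} only controls the \emph{time-integrated} spatial average $\mathbb E_N[\int_0^t\frac1N\sum_x\eta_s(x)^4\,ds]$; it gives no control on $\mathbb E_N[\eta_t(x)^4]$ at a fixed positive time, and no such fixed-time bound is available in the paper. The same issue touches your claim that the energy drift has bounded second moment at each fixed $s$. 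The repair is exactly the route you already use for the drift terms and which the paper follows throughout: bound $\sup_{0\le t\le T}|\langle\pi^{\sigma,N}_t,h_z\rangle|^2$ through the Dynkin decomposition, apply Cauchy--Schwarz in time to convert $\sup_t|\int_0^t(\cdots)ds|^2$ into $T\int_0^T(\cdots)^2ds$, and only then use the time-integrated fourth-moment bound (for the energy) or energy conservation together with \eqref{eq:assumption_initial_meaure_energy} (for the volume). With that substitution the argument closes; note also that for the volume's symmetric integral term the paper uses the two-point correlation estimate \cref{lem:correlation_estimate}, though for tightness alone the cruder conservation-law bound you implicitly rely on would suffice.
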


\begin{remark}
Note that the energy empirical measure could be interpret as a measure-valued process, so that we can state the above theorem without imposing any condition on the regularity $m$. 
Here we decided to work with Sobolev space valued processes also for the energy empirical measure, just for the sake of consistency. 
\end{remark}

\begin{theorem}
\label{thm:HDL_volume}
Let $v_0$ be a bounded real-valued measurable function on $\mathbb T$ and let $\{\mu_N\}_{N\in\mathbb N}$ be a sequence of probability measures on $\Omega_N$ associated to $v_0$ and assume that $\{\mu_N\}_{N\in\mathbb N}$ satisfies Assumption \ref{assump:initial_measure}.  
Moreover, assume that the parameter $\alpha_N$ is taken under the weak asymmetry \eqref{eq:weak_asymmetry} with $\kappa=1$.
Then, for every $t\in [0,T]$, the sequence of empirical measures $\{\pi^{v,N}_t\}_{N\in\mathbb N}$ converges in probability to the absolute continuous measure $v(t,u)du$, that is, for any $\delta>0$ and for any $G\in C^2(\mathbb{T})$ we have 
\begin{equation*}
\lim_{N\to\infty}Q^{v,N}_m \bigg(\Big|\langle \pi^{v,N}_{t},G\rangle-\int_{\mathbb T} v(t,u)G(u)du \Big|>\delta\bigg) =0
\end{equation*}
provided $m>5/2$, where $v(t,u)$ is the unique weak solution of the first equation in \eqref{eq:hdl}. 
\end{theorem}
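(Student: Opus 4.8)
The plan is to follow the entropy (more precisely, the martingale/tightness) method, adapted to the fact that the volume variables are $\mathbb{R}$-valued and that we are working in the negative Sobolev space $H^{-m}(\mathbb{T})$ with $m>5/2$. The starting point is Dynkin's formula: for $G\in C^2(\mathbb{T})$, the process
\begin{equation*}
M^{v,N}_t(G) = \langle \pi^{v,N}_t, G\rangle - \langle \pi^{v,N}_0, G\rangle - \int_0^t L_N \langle \pi^{v,N}_s, G\rangle\, ds
\end{equation*}
is a martingale. A direct computation with $S$ and $A$ shows that $N^2 S$ acting on $\frac1N\sum_x \eta(x) G(x/N)$ produces, after summation by parts, a term converging to $\langle \pi^{v,N}_s, \Delta G\rangle$ (the exchange part is the discrete Laplacian applied to $G$), while $\alpha_N N^2 A$ with $\alpha_N = \alpha N^{-1}$ produces a term $\frac{\alpha}{N}\sum_x(\eta(x+1)-\eta(x-1))G(x/N) = -\frac{2\alpha}{N}\sum_x \eta(x)\widetilde\nabla^N G(x/N)$, which converges to $-2\alpha\langle \pi^{v,N}_s, \nabla G\rangle$. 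Note that \emph{no unknown local functions} appear here: because the volume current under the Hamiltonian dynamics is already linear in $\eta$, the equation closes on its own and there is no need for a one- or two-block replacement lemma for the volume. This is the structural reason the volume hydrodynamics can be pushed through while the energy one cannot.

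The second step is tightness of $\{Q^{v,N}_m\}_N$ in $D([0,T],H^{-m}(\mathbb{T}))$. I would use the Mitoma-type criterion: it suffices to show tightness of the real-valued processes $\{\langle \pi^{v,N}_\cdot, h_z\rangle\}_N$ for each basis element $h_z$ of \eqref{eq:sobolev_CONS_definition}, together with a uniform bound $\limsup_N \mathbb{E}_N[\|\pi^{v,N}_t\|_{-m}^2]<\infty$ that lets one pass from the finite-dimensional projections to the full Sobolev-valued tightness (this is where $m>5/2$ enters, so that $\sum_z \gamma_z^{-m}\,\mathbb{E}_N[\langle \pi^{v,N}_t,h_z\rangle^2]$ converges; the summand is controlled using $\langle\pi^{v,N}_t,h_z\rangle^2 \le \frac1N\sum_x v^N_t(x)^2 + (\text{correlation terms})$, and the diagonal terms $v^N_t(x)^2$ are bounded by the maximum principle for the discrete heat equation applied to $v^N_t$ starting from the bounded profile $v_0$, using \eqref{eq:assumption_initial_measure_volume}, while the off-diagonal correlation sum is $O(1)$ by \cref{lem:correlation_estimate}). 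For the Aldous criterion on each $\langle\pi^{v,N}_\cdot,h_z\rangle$, I control the drift term by the $L_N$-computation above (bounded since $v^N_s(x)$ is bounded uniformly) and the martingale term by computing its quadratic variation $\langle M^{v,N}(G)\rangle_t = \int_0^t \Gamma_N(s)\,ds$, where $\Gamma_N$ comes from the carré-du-champ of $N^2S$ and equals $\frac{1}{N^2}\sum_x (\eta_s(x+1)-\eta_s(x))^2 (\nabla^N G(x/N))^2 \cdot \frac{1}{N}$ up to constants; in expectation this is $O(N^{-1})$ times $\sup_x e^N_s(x)$ plus correlations, hence vanishes — here I invoke the fourth-moment bound \cref{thm:fourth-moment_bound} (or just a second-moment bound for $e^N_s$) to make this rigorous. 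Thus the limiting martingale is zero and every limit point is concentrated on weak solutions of $\partial_t v = \Delta v + 2\alpha\nabla v$.

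The third step is identification and uniqueness. Any subsequential limit $Q^v$ is supported on trajectories $\pi^v_t$ satisfying, for all $G\in C^2(\mathbb{T})$, $\langle \pi^v_t,G\rangle - \langle \pi^v_0,G\rangle = \int_0^t \langle \pi^v_s, \Delta G - 2\alpha\nabla G\rangle\,ds$; the initial condition $\pi^v_0(du)=v_0(u)\,du$ follows from the association hypothesis; an energy/regularity estimate (again via the $H^{-m}$ bound and the fact that $v^N$ solves the discrete heat equation, so that $\pi^v_t$ is in fact a.c.\ with density in $L^2([0,T]\times\mathbb{T})$) lets one conclude $\pi^v_t(du)=v(t,u)\,du$ with $v$ a weak solution in the sense of the Definition above. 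Since the drift heat equation has a unique weak solution, all subsequential limits coincide, which upgrades convergence in distribution to convergence in probability, and since the limit is deterministic we get the stated convergence $Q^{v,N}_m(|\langle\pi^{v,N}_t,G\rangle - \int_\mathbb{T} v(t,u)G(u)\,du|>\delta)\to 0$.

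The main obstacle I expect is not the closure of the equation — which is automatic here — but the \emph{a priori moment control} needed to make tightness in $H^{-m}$ legitimate: one does not have a free second-moment bound on $\eta_t(x)$, so every step that looks routine in a bounded-spin model (the $H^{-m}$-norm bound, the vanishing of the quadratic variation, the passage from Mitoma's projections to full tightness) must be fed by the two external inputs of the paper, namely the uniform fourth-moment estimate \cref{thm:fourth-moment_bound} and the $1/N$ two-point correlation bound \cref{lem:correlation_estimate}; assembling these correctly, and in particular using the discrete maximum principle for $v^N_t$ to bound $\max_x|v^N_t(x)|$ and $\max_x|\nabla^N v^N_t(x)|$ uniformly in $N$ and $t$, is the delicate part.
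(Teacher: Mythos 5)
Your proposal is correct and follows essentially the same route as the paper: Dynkin's martingale decomposition with the observation that the volume equation closes without replacement lemmas, tightness in $H^{-m}(\mathbb T)$ for $m>5/2$ via the projections onto $h_z$ fed by the two-point correlation estimate of \cref{lem:correlation_estimate} together with the Aldous criterion for the martingale part, and identification of limit points through absolute continuity and uniqueness of the weak solution. The only slip is the extra factor $1/N$ in your quadratic variation formula (the correct expression is $\int_0^t N^{-2}\sum_x(\eta_s(x+1)-\eta_s(x))^2(\nabla^N G(x/N))^2\,ds$), which does not affect the conclusion that it vanishes.
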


Meanwhile, for the energy, the convergence in probability as stated above for the volume is left open in this paper. 
Instead, we are forced to state the result in mean as follows, but with some quantitative estimate on the rate of convergence.

\begin{theorem}
\label{thm:HDL_energy}
Assume that $ v_0, e_0\in C^\infty_b(\mathbb T)$ and that $e_0$ is positive. 
Let $\kappa=1$ and let $\{ ( v^N_t(x),e^N_t(x))\}_{t\ge 0, x\in\mathbb T_N}$ (resp. $\{ ( v(t,u), e(t,u))\}_{t\ge0 , u\in \mathbb T}$) be the unique solution of \eqref{eq:discrete_hdl} (resp. \eqref{eq:hdl}) with the following initial conditions: 
\begin{equation}
\label{eq:initial_cond}
v^N_0(x) =  v_0(x/N), \quad
e^N_0(x)=e_0(x/N)
\end{equation}
and 
\begin{equation*}
v(0,u) =  v_0(u),\quad
e(0,u) = e_0(u). 
\end{equation*}
Let $\mu_N$ be a product measure defined by \eqref{eq:product_measure_associated_to_continuous_profile}, and assume further that the sequence of measures $\{\mu_N\}_{N\in\mathbb N}$ satisfies \eqref{eq:assum_H-1norm} and \eqref{eq:assumption_initial_measure_correlation}.
Then, there exists $C=C(T,v_0,e_0)>0$ such that 
\begin{equation}
\label{eq:convergece_profile_energy}
\sup_{0\leq t\leq T}
\max_{x\in\mathbb T_N} 
\big| e^N_t(x)-e_t(x/N)\big| 
\le C\frac{\log N}{N}.
\end{equation}
\end{theorem}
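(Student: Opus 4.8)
The plan is to work entirely at the level of the discrete profiles $v^N_t(x)$ and $e^N_t(x)$, comparing them to the PDE solutions $v(t,u)$ and $e(t,u)$ evaluated at $u=x/N$. The first step is to derive the closed evolution equation (the "discrete hydrodynamic equation" \eqref{eq:discrete_hdl}) satisfied by $(v^N_t, e^N_t)$. Applying Dynkin's formula and using the explicit action of $L_N = N^2 S + \alpha_N N^2 A$ on the functions $\eta\mapsto\eta(x)$ and $\eta\mapsto\eta(x)^2$, one gets
\begin{equation*}
\partial_t v^N_t(x) = \Delta^N v^N_t(x) + 2\alpha \widetilde{\nabla}^N v^N_t(x),
\end{equation*}
which is closed and autonomous, and
\begin{equation*}
\partial_t e^N_t(x) = \Delta^N e^N_t(x) + 2\alpha \widetilde{\nabla}^N \big( v^N_t(x)^2 \big) + (\text{error terms involving } \varphi^N_t).
\end{equation*}
The key point is that the Hamiltonian part $A$ acting on $\eta(x)^2$ produces $2\eta(x)(\eta(x+1)-\eta(x-1))$, whose expectation is $2 v^N_t(x)(v^N_t(x+1)-v^N_t(x-1)) + 2(\varphi^N_t(x,x+1)-\varphi^N_t(x,x-1))$; so the discrete energy equation is \emph{not} closed, and the correlation terms $\varphi^N_t(x,x\pm1)$ appear as a source. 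Here \cref{lem:correlation_estimate} enters: since $\kappa=1$ and the initial correlation is $O(1/N)$, we have $\sup_{(x,y)}|\varphi^N_t(x,y)| \le C/N$ uniformly in $t\le T$, so with the factor $\alpha_N N^2 = \alpha N$ in front, this source term is of order $\alpha N \cdot (1/N) = O(1)$ pointwise — but crucially it must be integrated against the heat kernel, and the relevant quantity is $\alpha N \cdot \frac{1}{N}$ multiplied by a random-walk local-time factor at the diagonal, which contributes the extra $\log N$.

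The second step is a standard energy/comparison estimate. Write $R^N_t(x) = e^N_t(x) - e(t,x/N)$. Using consistency of the discrete Laplacian ($\Delta^N G(x/N) = \Delta G(x/N) + O(N^{-2})$ for $G\in C^\infty_b$) and of $\widetilde\nabla^N$, together with the fact that $v^N_t(x) \to v(t,x/N)$ with an explicit rate (which follows from the same comparison argument applied to the \emph{closed} volume equation, where maximum-principle bounds and the smoothness of $v_0$ give $\sup_{t,x}|v^N_t(x)-v(t,x/N)| = O(N^{-1})$ or better, using that the drift $2\alpha\widetilde\nabla^N$ is a consistent discretization), one obtains that $R^N_t$ solves a discrete heat equation with drift, with a forcing term bounded by
\begin{equation*}
\big| \text{forcing}(t,x) \big| \le C N^{-1} + 2\alpha \big| \widetilde\nabla^N( v^N_t(x)^2 - v(t,x/N)^2 ) \big| + 2\alpha N \big| \varphi^N_t(x,x+1) - \varphi^N_t(x,x-1) \big|.
\end{equation*}
By Duhamel's principle, $R^N_t(x) = \sum_y p^N_t(x,y) R^N_0(y) + \int_0^t \sum_y p^N_{t-s}(x,y) \,\text{forcing}(s,y)\, ds$, where $p^N$ is the transition kernel of the random walk $\{Y^{(1)}_t\}$ with drift. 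Since $R^N_0 \equiv 0$ by the choice \eqref{eq:initial_cond}, only the forcing contributes. The first two pieces of the forcing are $O(N^{-1})$ uniformly (for the $v^N$-vs-$v$ difference one uses $|a^2-b^2| = |a+b||a-b|$ with $a,b$ bounded and $\widetilde\nabla^N$ expanded carefully — here one needs a $C^1$-type bound on $v^N_t - v(t,\cdot/N)$, again available from the volume comparison), and integrating a uniform $O(N^{-1})$ bound against the probability kernel over $[0,T]$ just gives $O(N^{-1})$.

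The third and decisive step is the correlation source term $2\alpha N\int_0^t \sum_y p^N_{t-s}(x,y)\big(\varphi^N_s(y,y+1)-\varphi^N_s(y,y-1)\big) ds$. Naively bounding $|\varphi^N_s| \le C/N$ gives $2\alpha N \cdot \frac{C}{N} \cdot t = O(1)$, which is \emph{not} good enough. The improvement comes from summing by parts in $y$: the difference $\varphi^N_s(y,y+1)-\varphi^N_s(y,y-1)$ paired with $p^N_{t-s}(x,y)$ can be rewritten, after discrete summation by parts, as $\frac1N \sum_y (\nabla^N p^N_{t-s}(x,\cdot))(y)\, \psi^N_s(y)$ for an appropriate order-one reindexing $\psi^N_s$ of $N\varphi^N_s$ near the diagonal (which is $O(1)$ by \cref{lem:correlation_estimate}), so the bound becomes $2\alpha \int_0^t \sum_y |\nabla^N p^N_{t-s}(x,y)| \, ds \cdot O(1)$. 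The standard gradient heat-kernel estimate $\sum_y |\nabla^N p^N_\tau(x,y)| \le C \min(N, \tau^{-1/2})$ then yields $\int_0^t \min(N, (t-s)^{-1/2}) ds = \int_0^{1/N^2} N\, ds + \int_{1/N^2}^t (t-s)^{-1/2} ds \le C$ — still only $O(1)$, so in fact one more layer is needed: one must use \emph{both} the $1/N$ smallness of $\varphi^N$ \emph{and} the near-diagonal localization. The sharp route, which I expect the authors take, is to not sum by parts but instead recognize that $p^N_{t-s}(x,y)$ restricted to the diagonal strip $y \in \{x', x'\pm 1\}$ contributes a local-time factor: $\sum_{y}p^N_{t-s}(x,y)\mathbf 1_{\text{strip}} \le C\min(1,(N^2(t-s))^{-1/2})$, whence $2\alpha N \cdot \frac CN \int_0^t \min(1, (N^2(t-s))^{-1/2}) ds = 2\alpha C\big(\tfrac{1}{N^2}\cdot N^2\cdot\tfrac1{N^2} + \tfrac1N\log N\big)$-type bookkeeping, and the long-time/short-time split of $\int_0^t \min(1,(N^2 r)^{-1/2}) dr$ around $r \sim 1/N^2$ produces exactly the $\frac{\log N}{N}$: the short-time part is $\int_0^{1/N^2} 1\, dr = N^{-2}$ and contributes negligibly after the $N\cdot N^{-1}$ prefactor, while the careful analysis of how $\varphi^N_s$ itself decays — using that it solves its own evolution equation driven by a random walk on $V_N^{(2)}$ whose local time at the line $y=x+1$ is $O(1/N)$, as described in the introduction — is what upgrades the crude $O(1)$ to $O(N^{-1}\log N)$.

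Assembling the three pieces via Duhamel gives $\sup_{t\le T}\max_x |R^N_t(x)| \le C(T,v_0,e_0)\, N^{-1}\log N$, which is the claim. The main obstacle is unquestionably the correlation source term in the third step: one cannot simply use the pointwise bound $|\varphi^N| \le C/N$ from \cref{lem:correlation_estimate} because the prefactor $\alpha_N N^2 = \alpha N$ exactly cancels it, so the entire argument hinges on extracting additional decay either from the heat-kernel gradient/local-time estimates for $\{Y^{(1)}_t\}$ or, more likely, from a finer version of the correlation estimate that tracks the time-decay of $\varphi^N_s$ near the diagonal — and it is precisely the interplay of these two that generates the logarithmic correction, so the bound $\frac{\log N}{N}$ (rather than $\frac1N$) is genuine and not an artifact of the method.
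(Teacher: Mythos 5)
Your overall architecture (discrete equation for $\gamma^N_t(x)=e^N_t(x)-e(t,x/N)$ with a correlation source, Duhamel against the heat kernel, summation by parts onto the kernel, short/long time splitting) is exactly the paper's, and your first two steps are fine. The problem is in the decisive third step, where a bookkeeping error derails you. The source term is $2\alpha_N N^2\big(\varphi^N_t(x,x+1)-\varphi^N_t(x-1,x)\big)$; writing $f(x)=\varphi^N_t(x,x+1)$ this equals $2\alpha N\cdot\frac1N\nabla^N f(x-1)=2\alpha\,\nabla^N f(x-1)$, so after Duhamel and summation by parts the contribution is $-2\alpha\int_0^t\sum_y f_s(y)\,\nabla^N_y p_{t-s}(x,y)\,ds$, i.e. there is a factor $\frac{1}{N^2}$ (not $\frac1N$) in front of $\sum_y\psi^N_s(y)\nabla^N p_{t-s}(x,y)$ with $\psi^N=N\varphi^N=O(1)$. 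You dropped one power of $N$, which is why you arrive at the bound $2\alpha\int_0^t\sum_y|\nabla^N p_{t-s}(x,y)|\,ds\cdot O(1)=O(1)$ and then conclude that ``one more layer is needed.'' With the correct factor the bound is $\frac{2\alpha C}{N}\int_0^t\sum_y|\nabla^N_y p_{t-s}(x,y)|\,ds$, and the argument closes: this is precisely what the paper does. Concretely, on $[0,t_0)$ one uses $|\varphi^N|\le C/N$ (\cref{lem:correlation_estimate}) together with $\sum_y|\nabla^N_y p|\le 2N$ to get a contribution of order $\alpha_N N t_0=\alpha t_0$, while on $[t_0,t)$ the pointwise gradient bound $|\nabla^N_y P_x(Y^{(1)}_s=y)|\le C/(Ns)$ of \cref{lem:SRW_transition_prob_estimate} gives $\int_{t_0}^t C/(Ns)\,ds=O(N^{-1}\log(1/t_0))$; choosing $t_0=N^{-\delta}$, $\delta>1$, yields the stated $C\log N/N$.

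Because of the miscount, the second half of your third step is speculation that does not constitute a proof: you invoke an unproved local-time estimate for the one-dimensional kernel on a ``diagonal strip,'' and then defer to ``a finer version of the correlation estimate that tracks the time-decay of $\varphi^N_s$ near the diagonal,'' which you neither state nor prove and which the argument does not need. (The local-time-at-the-diagonal estimate of order $1/N$ is used in the paper to \emph{prove} \cref{lem:correlation_estimate} for the two-dimensional walk, not in the energy comparison.) Two smaller remarks: the paper runs Duhamel with the \emph{symmetric} walk generated by $\Delta^{(1)}_N$ and absorbs all drift discrepancies into an $O(N^{-1})$ remainder, which avoids having to prove gradient kernel bounds for a drifted walk as your setup would require; and your closing claim that the $\log N$ is ``genuine'' is doubtful — it comes from using the pointwise bound $C/(Ns)$ summed over $y$ rather than an $\ell^1$ gradient bound $\sum_y|\nabla^N p_s|\le Cs^{-1/2}$, which would give $O(1/N)$.
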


\begin{remark}
We note that the assumption $ v_0, e_0\in C^\infty_b(\mathbb T)$ could be relaxed to ask less regularity and also the condition \eqref{eq:initial_cond} by asking the same bound as in \eqref{eq:convergece_profile_energy} for the initial time. 
\end{remark}

\begin{remark}
\label{rmk:weak convergence energy}
As a consequence of \eqref{eq:convergece_profile_energy},  the sequence $\{\pi^{e,N}_t\}_{N\in\mathbb N}$ converges in mean in the following sense: 
\begin{equation*}
\lim_{N\to\infty}\mathbb{E}_N \big[\langle\pi_t^{e,N},G\rangle\big] 
=\int_\mathbb T e(t,u)G(u)du
\end{equation*}
for any $G\in C^2(\mathbb T)$. 
This will be proved in the end of \cref{sec:hydrodynamics}.
\end{remark}

\subsubsection{Non-equilibrium fluctuations}
Recall that we defined the discrete profiles for volume and energy,  $ v^N_t(x)$ and $e^N_t(x)$, by \eqref{eq:element_definition}. 
Now, we are in a position to state our main theorem for the non-equilibrium fluctuations for the volume. 
Let us define the  fluctuation field associated to the volume by 
\begin{equation*}
\mathcal{V}^N_t(G)
= \frac{1}{\sqrt{N}}
\sum_{x\in\mathbb T_N} 
\big(\eta_t(x) -  v^N_t(x)\big) 
T^-_{f_Nt}G \Big( \frac{x}{N}\Big)
\end{equation*}
for any test function $G\in C(\mathbb T)$, where $f_N$ is a moving frame defined by
\begin{equation}
\label{eq:moving_frame_definition}
f_N =  2\alpha_N N^2
\end{equation}
and $T^\pm_\cdot$ is a canonical shift defined by $T^\pm_zg(x)= g(x\pm z)$ for any $z\in\mathbb R$. 
Let $\mathcal Q^N_m$ denote the probability measure on $D([0,T],H^{-m}(\mathbb T))$ induced by the volume fluctuation field $\mathcal{V}^N_\cdot$ starting from an initial measure $\mu_N$, which will be specified in the statement.  
Then, we have the following statement for the non-equilibrium fluctuations for the volume field.

\begin{theorem}
\label{thm:main fluctuations}
Let $m>5/2$ be fixed. 
Assume that the sequence of initial measures $\{\mu_N\}_{N\in\mathbb N}$ satisfies Assumption \ref{assump:initial_measure}. 
Moreover, assume that bounded measurable functions $v_0:\mathbb T\to\mathbb R$ and $e_0:\mathbb T\to(0,\infty)$ satisfies $e_0(u)-v_0(u)^2\ge 0$ for any $u\in\mathbb T$.
Finally, assume that the sequence of initial density fields $\mathcal{V}_0^N$ converges in distribution to an $H^{-m}(\mathbb T)$-valued random variable $\mathcal{V}_0$ as $N\to\infty$.  
\begin{itemize}
\item[a)] 
Let $\mathcal{M}^N_t$ be a mean-zero martingale associated naturally to the volume fluctuation fields $\mathcal V^N_\cdot$ (see \eqref{eq:dynkin_martingale_definition} for the definition).  
The sequence $\{\mathcal{M}^N_t: t\in [0,T]\}_{N\in\mathbb N}$ is tight in $D([0,T],H^{-m}(\mathbb T))$, so that there exists a subsequence of $N$, which is again denoted by the same letter by abuse of notation, along which the sequence converges to a limit $\{\mathcal{M}_t ; t\in [0,T]\}$ in distribution:
\begin{equation*}
\lim_{N\to\infty} \mathcal{M}^N_\cdot = \mathcal{M}_\cdot 
\end{equation*}
where the limit $\mathcal{M}_\cdot $ is again a mean-zero martingale.
\item [b)]For any $f \in C^\infty(\mathbb T)$, we have that
\begin{equation}\label{eq:lim_qv} 
\lim_{N\to \infty} \mathbb E_N[\langle \mathcal{M}^N(f)\rangle_t]  
= \int_0^t \int_{\mathbb T} 2 \chi(s,u) (\nabla f(u))^2 du ds
\end{equation}
where $\chi(t,u) = e(t,u) - v(t,u)^2$, and $(v,e)$ is the unique weak solution of \eqref{eq:hdl} with initial profile $(v_0,e_0)$.  
\item  [c)]
The sequence $\{\mathcal Q^N_m\}_N$ is tight and any of its limit points $\mathcal Q_m$ satisfies the identity 
\begin{equation}
\label{eq:volume_limit_field}
\mathcal{V}_t(f) 
= \mathcal{V}_0(\mathcal T_tf) 
+ \mathcal{M}_t(f)
\end{equation}
for any $f\in \mathcal{D}(\mathbb T)$, where $\mathcal{M}_t$ is a mean-zero  martingale with continuous trajectories and $\mathcal T_t$ is the semigroup generated by the one-dimensional Laplacian $\Delta$. Moreover, $\mathcal V_0$ and $\mathcal M_t$ are uncorrelated in the sense that, for all $f,g\in\mathcal D(\mathbb T)$ and for all $t\in[0,T]$, it holds $E_{\mathcal Q_m}[\mathcal  V_0(g)\mathcal M_t(f)]=0$.
\item [d)] If $\mu_N$ is the invariant measure $\mu_{\rho,\beta}$ for some $\rho\in\mathbb R$ and $\beta>0$, then we can strength our result in the sense that we have the convergence of the sequence $\{\mathcal Q^N_m\}_N$ to the solution of the Ornstein-Uhlenbeck equation
$$
d\mathcal V_t=\Delta \mathcal V_t dt+\sqrt{\frac{2}{\beta}} \nabla \dot{\mathcal W_t} 
$$
again provided that $m>5/2$, where $\dot{\mathcal W_t}$ is a one-dimensional space-time white noise. 
\end{itemize}
\end{theorem}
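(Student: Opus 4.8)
\noindent The plan is to run the usual martingale route to the Ornstein--Uhlenbeck limit, with the two model-specific inputs — the correlation bound \cref{lem:correlation_estimate} and the fourth-moment bound \cref{thm:fourth-moment_bound} — doing the work done by the equivalence of ensembles in the bounded-spin setting. First I would write Dynkin's formula for $F_s(\eta)=\tfrac1{\sqrt N}\sum_x(\eta(x)-v^N_s(x))\,T^-_{f_Ns}G(x/N)$, whose value at $\eta_t$ is $\mathcal V^N_t(G)$. Using $L_N\eta(x)=\Delta^N\eta(x)+2\alpha\widetilde\nabla^N\eta(x)$ (valid since $\kappa=1$) together with the fact that $v^N_\cdot$ solves $\partial_sv^N_s=\Delta^Nv^N_s+2\alpha\widetilde\nabla^Nv^N_s$, a summation by parts turns the compensator into a discrete-Laplacian term plus the transport term $2\alpha\widetilde\nabla^N$, and the moving frame $f_N=2\alpha_NN^2$ is tuned precisely so that $\partial_sT^-_{f_Ns}G$ cancels that transport term up to $O(N^{-2})$. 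Hence
\begin{equation*}
\mathcal M^N_t(G)=\mathcal V^N_t(G)-\mathcal V^N_0(G)-\int_0^t\frac1{\sqrt N}\sum_{x\in\mathbb T_N}\overline\eta_s(x)\,\Delta^N\big(T^-_{f_Ns}G\big)\big(\tfrac xN\big)\,ds+\mathcal E^N_t(G)
\end{equation*}
is a mean-zero martingale, where $\mathbb E_N[\mathcal E^N_t(G)^2]\to0$ once one controls the diagonal via the energy-profile bounds of \cref{sec:hydrodynamics} and the off-diagonal via \cref{lem:correlation_estimate}. Since $A$ is a derivation it does not contribute to the carr\'e du champ, and a direct computation of $L_N(F_s^2)-2F_sL_NF_s$ gives the explicit bracket
\begin{equation*}
\langle\mathcal M^N(G)\rangle_t=\int_0^t\frac1N\sum_{x\in\mathbb T_N}\big(\eta_s(x+1)-\eta_s(x)\big)^2\Big(\nabla^N\big(T^-_{f_Ns}G\big)\big(\tfrac xN\big)\Big)^2\,ds.
\end{equation*}
Tracking $\nabla^Nh_z$ and $\Delta^Nh_z$ through these formulas produces polynomial growth in $z$ that is summable against $\gamma_z^{-m}$ exactly when $m>5/2$, which is where that threshold enters here and in the tightness below.

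For (a) I would bound $\mathbb E_N[\langle\mathcal M^N(G)\rangle_t-\langle\mathcal M^N(G)\rangle_{t'}]\le C(t-t')\sup_{r\le T}\|e^N_r\|_\infty\,\|\nabla^NG\|_\infty^2$ using $(\eta_r(x+1)-\eta_r(x))^2\le2\eta_r(x+1)^2+2\eta_r(x)^2$, the supremum being finite uniformly in $N$ by the discrete-profile estimates of \cref{sec:hydrodynamics}. This gives Aldous' condition for each coordinate $\{\mathcal M^N_\cdot(G)\}$, hence via Mitoma's criterion the tightness of $\{\mathcal M^N_\cdot\}$ in $D([0,T],H^{-m}(\mathbb T))$, and Doob's inequality supplies the uniform $L^2$ bound making every subsequential limit a mean-zero martingale. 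For (b) I would expand $\mathbb E_N[(\eta_s(x+1)-\eta_s(x))^2]=e^N_s(x)+e^N_s(x+1)-2v^N_s(x)v^N_s(x+1)-2\varphi^N_s(x,x+1)$: the last term is $O(1/N)$ uniformly in $x$ by \cref{lem:correlation_estimate} and drops out after spatial averaging, the remaining three converge to $2\chi(s,x/N)$ by the hydrodynamic behaviour of $v^N$ and $e^N$, and a Riemann-sum argument in the moving frame with dominated convergence in $s$ then yields \eqref{eq:lim_qv}.

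For (c) I would instead feed the time-dependent test function $P^N_{t-s}\big(T^-_{f_Nt}G\big)$ — with $P^N_\cdot$ the semigroup generated by $\Delta^N$ — into Dynkin's formula; this choice annihilates the compensator and leaves the mild identity $\mathcal V^N_t(G)=\mathcal V^N_0\big(P^N_t\,T^-_{f_Nt}G\big)+\widehat{\mathcal M}^N_t(G)+o_{L^2}(1)$ with $\widehat{\mathcal M}^N_\cdot(G)$ a martingale. The same a priori bounds, together with the assumed convergence of $\mathcal V^N_0$, give tightness of $\{\mathcal Q^N_m\}$; passing to a joint subsequential limit of $(\mathcal V^N_\cdot,\widehat{\mathcal M}^N_\cdot,\mathcal V^N_0)$ and using $P^N_t\to\mathcal T_t$ and $T^-_{f_Nt}G\to G$ in $H^m(\mathbb T)$ produces $\mathcal V_t(G)=\mathcal V_0(\mathcal T_tG)+\mathcal M_t(G)$ with $\mathcal M_\cdot$ a mean-zero martingale. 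It has continuous trajectories because each jump of $\widehat{\mathcal M}^N_\cdot(G)$ is at most $CN^{-3/2}\sup_{r\le T,x}|\eta_r(x+1)-\eta_r(x)|$, which vanishes by \cref{thm:fourth-moment_bound} and a union bound over the jump times; and the uncorrelatedness follows from $E_N[\mathcal V^N_0(g)\widehat{\mathcal M}^N_t(f)]=0$ (a martingale null at $0$ tested against an $\mathcal F_0$-measurable variable), passing to the limit by uniform integrability. This gives \eqref{eq:volume_limit_field}.

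For (d) the key new feature is that $\mu_{\rho,\beta}$ is invariant for the full dynamics — $S$ visibly preserves it and so does $A$, since its drift field is divergence-free and conserves $\sum_x(\eta(x)-\rho)^2$ — so at each time $s$ the law is still the product Gaussian, whence $v^N_s\equiv\rho$, $e^N_s\equiv\rho^2+\beta^{-1}$ and $\varphi^N_s(x,y)=0$ for $x\neq y$ exactly. Then $\mathbb E_N[(\eta_s(x+1)-\eta_s(x))^2]=2/\beta$ and, the summands $(\eta_s(x+1)-\eta_s(x))^2$ being independent over disjoint bonds,
\begin{equation*}
\mathrm{Var}_N\Big(\frac1N\sum_{x\in\mathbb T_N}\big(\eta_s(x+1)-\eta_s(x)\big)^2\Big(\nabla^N\big(T^-_{f_Ns}G\big)\big(\tfrac xN\big)\Big)^2\Big)=O(1/N),
\end{equation*}
so $\langle\mathcal M^N(G)\rangle_t\to\tfrac2\beta t\|\nabla G\|_2^2$ in $L^2$ (no frame correction is needed, $\chi\equiv\beta^{-1}$ being constant). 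Hence any limiting martingale has deterministic bracket $\tfrac2\beta t\|\nabla G\|_2^2$ and, by polarization, $\langle\mathcal M(f),\mathcal M(g)\rangle_t=\tfrac2\beta t\langle\nabla f,\nabla g\rangle$; being continuous it must be $\sqrt{2/\beta}\,\nabla\dot{\mathcal W}_\cdot$ by L\'evy's theorem. Since moreover $\mathcal V^N_0\Rightarrow\mathcal V_0$, a spatial white noise of variance $\beta^{-1}$ (classical CLT for i.i.d.\ Gaussians), part (c) identifies $\mathcal V_\cdot$ as the stationary solution of $d\mathcal V_t=\Delta\mathcal V_t\,dt+\sqrt{2/\beta}\,\nabla\dot{\mathcal W}_t$, and uniqueness in law for this Ornstein--Uhlenbeck equation promotes subsequential convergence to convergence of the whole sequence $\{\mathcal Q^N_m\}$. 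The hard part is exactly the gap between (b)--(c) and (d): off equilibrium we have no control on four-point volume correlations, so $\langle\mathcal M^N(G)\rangle_t$ cannot be shown to concentrate and the law of $\mathcal M_\cdot$ cannot be pinned down — only the product structure preserved at stationarity makes the bracket deterministic in the limit, which is why a complete characterization is obtained there and nowhere else.
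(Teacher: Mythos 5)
Your proposal follows essentially the same route as the paper: the Dynkin decomposition with the moving frame cancelling the transport term, tightness via the $h_z$ coordinates and Aldous' criterion for $m>5/2$, the expansion of $\mathbb E_N[(\eta_s(x+1)-\eta_s(x))^2]$ in terms of $e^N$, $v^N$ and $\varphi^N$ combined with \cref{lem:correlation_estimate} and the quantitative energy convergence for item b), the backward heat semigroup as time-dependent test function for item c) (you use the discrete semigroup $P^N_{t-s}$, the paper the continuous one $\mathcal T_{t-s}$ — an immaterial difference), and the $L^2$ concentration of the bracket under the product invariant measure for item d).

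One justification in item c) would not survive scrutiny as written: you claim the maximal jump of the martingale, of size $CN^{-3/2}\sup_{r\le T,\,x}|\eta_r(x+1)-\eta_r(x)|$, vanishes ``by \cref{thm:fourth-moment_bound} and a union bound over the jump times.'' The fourth-moment bound only controls a time-integrated spatial average of $\eta^4$ and gives no control on a supremum over the $O(N^3)$ jump times in $[0,T]$, so a union bound does not close. The correct (and simpler) input, used in the paper, is the conservation of total energy: $(\eta_r(x+1)-\eta_r(x))^2\le 2\sum_x\eta_r(x)^2=2\sum_x\eta_0(x)^2$, which is $O(N)$ in expectation by \eqref{eq:assumption_initial_meaure_energy}, so the squared jump in $H^{-m}$ is $O(N^{-2})$ and the limit trajectories are continuous. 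With that substitution the argument is complete.
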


\begin{remark}
Regarding  item b) in \cref{thm:main fluctuations}, it is not difficult to show that the uniform non-negativity of the static compressibility $\chi$ at the initial time propagates to any time $t\ge 0$. 
Indeed, notice that the time evolution of $\chi$ is given by
\begin{equation*}
\partial_t \chi 
= \Delta\chi 
+ 2(\nabla v)^2.
\end{equation*}
Then, due to the maximum principle, we have $\chi(t,u) \ge 0$ for any $t\ge 0$ and for any $u\in\mathbb T$.  
\end{remark}

We observe that in the previous result (apart from item d)) we do not have a full convergence result because we do not have enough information about the quadratic variation of the limiting martingale $\mathcal M_t$. We were not able to prove convergence in law to its expectation, but only that this convergence holds in expectation, and this does not fully characterize the distribution of the limit. One way to prove indeed that the convergence in distribution holds is to obtain four-point correlation bounds. However, this seems much more challenging than deriving bounds on the two-point correlations (as we do below). The reason for this difficulty comes from the presence of the Hamiltonian dynamics, since we obtain an evolution equation that is written in terms of non-homogeneous random walks for which sharp bounds on the transition probabilities are not available in the literature. Therefore, we leave this as an open problem. 
\medskip

Now we give some comments on the strategy of the proof. First we observe that our model deals with unbounded variables which do not have a sign, this causes troubles when controlling many additive functionals that appear along the way. 
Moreover, we do not have any information about higher moments of the volume; this has to be extracted from correlation estimates, in what concerns second-moment bounds or the so-called $\mathcal H^{-1,N}$ estimates for averages of time-integrated fourth moments. 
Our main results regarding moment bounds can be stated as follows. The first result gives good control on the averages of the fourth moments.
\begin{theorem}
\label{thm:fourth-moment_bound}
Let $T>0$ and assume that the sequence of probability measures $\{\mu_N\}_N$ satisfies
\eqref{eq:assum_H-1norm}.
Then, there exists some $C=C(T)>0$ such that for any $t\in[0,T]$ it holds 
\begin{equation*}
\mathbb{E}_N \bigg[\int_0^t\frac{1}{N}\sum_{x\in\mathbb{T}_N}\eta_s(x)^4ds\bigg] 
\le C(1 + \alpha_N N).   
\end{equation*}
\end{theorem}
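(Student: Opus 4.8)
The plan is to derive an evolution equation for the discrete $\mathcal H^{-1,N}$-norm of the energy configuration $\eta_t^2$ and to extract the fourth-moment average as a dissipative (carré-du-champ) term. Concretely, I would apply Dynkin's formula to the functional $t\mapsto \|\eta_t^2\|_{-1,N}^2$, where $\|\cdot\|_{-1,N}$ is the norm introduced in \eqref{eq:h-1norm_expression} — that is, the norm dual to the discrete gradient, so that $\|g\|_{-1,N}^2 = \langle g, (-\Delta^N)^{-1} g\rangle$ on the space of mean-zero sequences (with the zero mode handled separately, since the total energy is conserved). The point of working with the $-1$ norm is that the symmetric exchange part $N^2 S$, when acted against $\eta^2$, produces a Dirichlet form whose leading contribution is comparable to $\frac1N\sum_x \eta_s(x)^4$ — this is the standard mechanism by which one controls a degree-raising quantity by a negative Sobolev norm. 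So the first step is the identity
\begin{equation*}
\mathbb E_N\big[\|\eta_t^2\|_{-1,N}^2\big]
= \mathbb E_N\big[\|\eta_0^2\|_{-1,N}^2\big]
+ \mathbb E_N\Big[\int_0^t L_N \|\eta_s^2\|_{-1,N}^2\, ds\Big],
\end{equation*}
and then a careful computation of $L_N\|\eta^2\|_{-1,N}^2 = N^2 S\|\eta^2\|_{-1,N}^2 + \alpha_N N^2 A\|\eta^2\|_{-1,N}^2$.

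Second, I would analyze the symmetric contribution. Writing $\eta^{x,x+1}$ for the swapped configuration, one has $(\eta^{x,x+1})^2(z) = (\eta^2)^{x,x+1}(z)$, so $S$ acts on $\eta^2$ as a genuine exchange operator, and the standard computation gives $N^2 S\|\eta^2\|_{-1,N}^2 = -N^2 \sum_x \big(\text{something}\big)^2 + (\text{lower order})$, where the square involves differences of $(-\Delta^N)^{-1}\eta^2$ evaluated at neighboring sites. The key quantitative input — which I expect to be a short lemma or an appendix computation — is that this carré-du-champ term dominates $c\,\frac1N\sum_x \eta_s(x)^4$ from below up to a term controlled by $\frac1N\sum_x\eta_s(x)^2$ times a constant; the latter is handled because the total energy $\sum_x\eta_s(x)^2$ is conserved and its expectation is bounded by the initial data under \eqref{eq:assum_H-1norm} (indeed $\mathbb E_N[\frac1N\sum_x\eta_0(x)^2]$ is finite since it is dominated by a multiple of $\mathbb E_N[\|\eta_0^2\|_{-1,N}^2]$ plus the conserved mean, or can be assumed bounded). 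Thus the symmetric part yields, after moving it to the left-hand side,
\begin{equation*}
c\,\mathbb E_N\Big[\int_0^t \frac1N\sum_{x\in\mathbb T_N}\eta_s(x)^4\, ds\Big]
\le \mathbb E_N\big[\|\eta_0^2\|_{-1,N}^2\big] + C t
+ \Big|\mathbb E_N\Big[\int_0^t \alpha_N N^2 A\|\eta_s^2\|_{-1,N}^2\, ds\Big]\Big|.
\end{equation*}

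Third, and this is where the main obstacle lies, I must bound the antisymmetric (Hamiltonian) contribution $\alpha_N N^2 A\|\eta^2\|_{-1,N}^2$. Since $A$ is a first-order differential operator, $A\|\eta^2\|_{-1,N}^2$ will be a sum over $x$ of $\eta(x\pm1)$ times $\eta(x)$ times a first difference of $(-\Delta^N)^{-1}\eta^2$ — that is, a cubic expression in $\eta$ paired against a smoothing kernel. The danger is that a naive bound produces $\frac1N\sum_x|\eta_s(x)|^3$ or even $\frac1N\sum_x\eta_s(x)^4$ with too large a constant, which cannot be absorbed. The resolution should be a Cauchy–Schwarz/Young split that trades this term against a small multiple $\varepsilon\,\frac1N\sum_x\eta_s(x)^4$ (absorbed into the left side, using that $\alpha_N\to0$, or at worst requiring the factor $(1+\alpha_N N)$ on the right) plus a term controlled by the regularity of the resolvent kernel $(-\Delta^N)^{-1}$, whose discrete gradient is bounded (the Green's function of the discrete Laplacian on $\mathbb T_N$ has gradient of order $1/N$ in sup norm, or is at least bounded in the relevant norm). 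Carrying out this estimate one sees the extra factor $\alpha_N N = \alpha N^{1-\kappa}$ emerge precisely because the Hamiltonian term carries $\alpha_N N^2$ while the available dissipation and the kernel bounds save only a factor $N$; hence the right-hand side $C(1+\alpha_N N)$. Collecting the three steps and using \eqref{eq:assum_H-1norm} to bound $\mathbb E_N[\|\eta_0^2\|_{-1,N}^2]$ uniformly in $N$ gives the claim. The one place requiring genuine care — beyond bookkeeping — is verifying that the constant multiplying $\frac1N\sum_x\eta_s^4$ coming out of the Hamiltonian estimate can be made strictly smaller than the dissipative constant $c$ (possibly at the cost of the $\alpha_N N$ factor), so that absorption is legitimate; I would isolate that as the crux lemma.
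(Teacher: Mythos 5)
Your choice of functional and overall architecture (Dynkin's formula applied to $\|\eta_t^2\|^2_{-1,N}$, extraction of the fourth moment from the symmetric part, Gronwall at the end) is exactly the paper's, but the mechanism you describe for the symmetric part is not the one that works, and the step you flag as the crux is resolved in the paper by a different device. First, the dissipation does \emph{not} come from the carr\'e du champ. Writing $\omega=\eta^2$, the exchange generator applied to the quadratic functional $\tfrac1N\langle\omega,K_N*\omega\rangle$ splits into an off-diagonal transport part and a diagonal correction. The transport part, via the resolvent identity $(I-\Delta^N)K_N=\delta_0$ (note the paper works with $(I-\Delta^N)^{-1}$, not $(-\Delta^N)^{-1}$, which is what makes this identity clean and sidesteps the zero mode), produces $-\tfrac2N\sum_x\eta(x)^4+2\|\eta^2\|^2_{-1,N}$; this is the source of the good term. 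The genuine carr\'e du champ is the diagonal correction $2N(K_N(0)-K_N(1))\sum_x(\eta(x+1)^2-\eta(x)^2)^2$, which is \emph{positive} and, after the bound $(a^2-b^2)^2\le a^4+b^4$, is of the \emph{same order} in $N$ as the dissipation, namely $\tfrac{2(1-K_N(0))}{N}\sum_x\eta(x)^4$ — it is not ``lower order'', and it nearly cancels the good term; keeping track of this requires the exact kernel identity $N^2(K_N(0)-K_N(1))=(1-K_N(0))/2$. Your proposal, which asserts a lower bound $c\,\tfrac1N\sum_x\eta(x)^4$ for a (sign-reversed) ``carr\'e du champ'' and treats the diagonal terms as negligible, does not survive this cancellation as written.

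Second, the Hamiltonian part. You propose a Young-type splitting producing $\varepsilon\,\tfrac1N\sum_x\eta(x)^4$ to be absorbed into the dissipation, and you correctly identify the legitimacy of that absorption as the unproven crux — but you do not carry it out. The paper avoids absorption entirely: using $K_N(x)-K_N(x+1)=\tfrac1{N^2}\{\tfrac{1-K_N(0)}{2}-\sum_{j=1}^{x}K_N(j)\}$ together with $|\sum_{j=1}^{x-y}K_N(j)|\le(x-y)/N$, the cubic term $\alpha_NN\sum_{x,y}\eta(x)\eta(x+1)\eta(y)^2(K_N(x-y)-K_N(x-y+1))$ is bounded by $C\alpha_NN\big(\tfrac1N\sum_x\eta(x)^2\big)^2$, i.e., by $\alpha_NN$ times the square of the conserved energy density, whose expectation at time zero is controlled by \eqref{eq:assum_H-1norm} (it is the zero Fourier mode of $\|\eta_0^2\|^2_{-1,N}$). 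No fourth moment appears on the Hamiltonian side, and this is precisely where the factor $\alpha_NN$ in the statement originates. To complete your version you would need either to perform the absorption with an explicit small constant compatible with the (already nearly cancelled) net dissipation, or to switch to the paper's conserved-quantity bound.
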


The second result gives a good control of the two-point correlation function.
\begin{theorem}
\label{lem:correlation_estimate}
Assume $\kappa\ge 1$, and assume that the sequence of initial measures $\{\mu_N\}_{N\in\mathbb N}$ satisfies the conditions \eqref{eq:assumption_initial_measure_volume} and \eqref{eq:assumption_initial_measure_correlation}. Then, there exists a constant $C=C( v_0,e_0,T)>0$ that does not depend on $N$, such that 
\begin{equation}
\label{eq:volume_correlation_estimate}
\sup_{0\le t\le T} 
\| \varphi^{N}_s\|_{\ell^\infty(V^{(2)}_N)}
\le C/N. 
\end{equation}
\end{theorem}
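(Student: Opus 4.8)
The plan is to derive a closed evolution equation for $\varphi^N_t(x,y)$ on the set $V_N^{(2)}$, interpret it probabilistically via Duhamel's formula in terms of the reflected two-dimensional random walk $\{X^{(2)}_t\}$ generated by $\mathscr L^{(2)}_N$, and then bound the resulting expectation uniformly in $N$. First I would apply the generator $L_N = N^2 S + \alpha_N N^2 A$ to the product $\overline\eta_t(x)\overline\eta_t(y)$ and take expectations. The symmetric (exchange) part $N^2 S$ produces, after the standard computation, the action of a discrete Laplacian-type operator on $\varphi^N_t$, except that the diagonal $x = y$ must be excised: the exchange of sites $x$ and $x+1$ when $\{x,y\}$ are adjacent forces a reflection-type boundary condition on the line $y = x+1$ (and symmetrically $x = y+1$), which is exactly why the process $X^{(2)}$ lives on $V_N^{(2)}$ and is reflected on the diagonal. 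The Hamiltonian part $\alpha_N N^2 A$, being a first-order (transport) operator, contributes two types of terms: a ``bulk'' transport term $\alpha_N N^2(\varphi^N_t(x+1,y) - \varphi^N_t(x-1,y) + \varphi^N_t(x,y+1) - \varphi^N_t(x,y-1))$, which can be absorbed into the generator of a weakly asymmetric walk, and crucially an inhomogeneous source term supported near the diagonal, of the form $\alpha_N N^2$ times combinations of the discrete profiles $v^N_t$ and $e^N_t$ evaluated at neighboring sites (this is the term coming from differentiating the quadratic conserved quantity — it is where $e^N_t(x) = \mathbb E_N[\eta_t(x)^2]$ enters).

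Next, writing $\mathscr L^N$ for the full generator appearing on the right-hand side (reflected Laplacian plus weak drift), Duhamel's principle gives
\begin{equation*}
\varphi^N_t(x,y) = \mathbf E_{(x,y)}\big[\varphi^N_0(X^{(2)}_t)\big] + \int_0^t \mathbf E_{(x,y)}\big[ g^N_{t-s}(X^{(2)}_s)\big]\, ds,
\end{equation*}
where $g^N$ collects the diagonal source terms. The first term is bounded by $C/N$ directly from Assumption \eqref{eq:assumption_initial_measure_correlation}. For the second term, the key point is that the source $g^N$ is supported only on the lines $\{y = x+1\}$ and $\{x = y+1\}$ (a set of $O(N)$ points out of $N^2$), and carries a prefactor $\alpha_N N^2 = \alpha N^{2-\kappa}$; meanwhile the magnitude of $g^N$ on those lines is controlled using the \emph{volume} bounds from \eqref{eq:assumption_initial_measure_volume} — here one must use that $v^N_t$ solves (or approximately solves) the discrete heat/transport equation so that a discrete maximum principle gives $|v^N_t(x)|, |\nabla^N v^N_t(x)|, |\Delta^N v^N_t(x)| \le C$ uniformly, making the relevant combination of $v$-profiles of size $O(1/N)$ after the discrete gradient is extracted. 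The remaining integral is then estimated by the expected local time that $X^{(2)}$ spends on the diagonal line $\{y=x+1\}$ up to time $T$, which I would show is of order $1/N$ (this is the two-dimensional analogue of the fact that a one-dimensional walk spends $O(\sqrt t)$ time at the origin but here the relevant codimension-one set, combined with the $N^2$ time acceleration and diffusive scaling, gives the $1/N$ bound). Combining: $\alpha_N N^2 \times (\text{size of }g^N) \times (\text{local time}) = N^{2-\kappa} \cdot N^{-1}\cdot N^{-1} \cdot N^{-1}\cdot(\ldots)$, and tracking the powers carefully for $\kappa \ge 1$ yields the $C/N$ bound.

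The main obstacle is the appearance of the energy profile $e^N_t$ in the source term $g^N$: unlike $v^N_t$, we have no a priori uniform bound on $e^N_t$ (indeed \eqref{eq:assumption_initial_meaure_energy} only controls it at time zero, and Theorem \ref{thm:HDL_energy} already costs a $\log N$). To handle this I would, following the bootstrap indicated in the introduction, derive a separate evolution equation for the ``extra'' near-diagonal quantities (essentially $\mathbb E_N[\eta_t(x)\eta_t(x+1)]$ and $e^N_t(x)$), express them via Duhamel in terms of the \emph{one-dimensional} projected walk $\{\widetilde X^{(1)}_t\}$ with generator $\mathfrak L^{(1)}_N$, and observe that this equation in turn feeds back $\varphi^N_t$ through the Hamiltonian term. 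Rather than trying to close this system exactly, I would iterate the two Duhamel representations, splitting the time integral into a short-time regime $[t-\delta_N, t]$ and a long-time regime $[0,t-\delta_N]$, using the sharp on-diagonal decay $\sim (N^2 s)^{-1/2}$ of the one-dimensional transition probabilities to make the short-time contribution summable and the long-time contribution small, and optimizing $\delta_N$; a Gronwall-type closing argument then produces the uniform constant $C(v_0,e_0,T)$. The delicate bookkeeping is to verify that each iteration does not lose powers of $N$ — precisely here the restriction $\kappa \ge 1$ is used, since the Hamiltonian prefactor $\alpha_N N^2 = \alpha N^{2-\kappa}$ must be beaten by the $N^{-1}$ gains coming from discrete gradients and local-time estimates at each step.
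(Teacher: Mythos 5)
Your proposal follows essentially the same route as the paper: the evolution equation for $\varphi^N_t$ with a source supported on the lines $y=x\pm1$, Duhamel with the reflected two-dimensional walk, the $O(1/N)$ local-time estimate on the diagonal, and then a bootstrap in which the near-diagonal quantity $e^N_t(x+1)-e^N_t(x)-v^N_t(x+1)^2+v^N_t(x)^2$ is itself represented via Duhamel against a one-dimensional walk, the time integral is split into short and long times using the sharp bound $|\Delta^{(1)}_N P_x(Y^{(1)}_t=y)|\le C/(Nt^{3/2})$, and the resulting self-referential inequality is closed by absorption using $\kappa\ge 1$. The only cosmetic differences are that the paper uses the plain symmetric walk $Y^{(1)}$ on $\mathbb T_N$ (not the projected walk $\widetilde X^{(1)}$, which serves only for the local-time lemma) in the bootstrap step, and closes with a one-shot absorption inequality rather than an iterated Gronwall argument.
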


Here follows an outline of the paper. In \cref{sec:hydrodynamics} we present the proof of the hydrodynamic limit. 
In \cref{sec:fluctuations} we present the proof of the non-equilibrium fluctuations for the volume. 
Both results substantially rely on moment bounds and decay of the two-point correlation function. These are presented in Sections \ref{sec:moment bound} and \ref{sec:two-point_correlation_estimate}.

\section{Hydrodynamic limit}
\label{sec:hydrodynamics}
\subsection{Proof outline}
In this section we prove Theorems \ref{thm:HDL weak regime}, \ref{thm:HDL_volume} and \ref{thm:HDL_energy}. 
To give an intuition,  we note that it is possible to check that the discrete profiles for the volume and energy satisfy 
\begin{equation}
\label{eq:discrete_hdl}
\begin{cases}
\begin{aligned}
& \partial_t  v^N_t(x)
= \Delta^N  v^N_t(x) 
+ 2\alpha_NN \widetilde{\nabla}^N  v^N_t(x),\\
&\partial_t e^N_t(x)
= \Delta^N e^N_t(x)
 + 2\alpha_NN 
\nabla^N \big( v^N_t(x-1) v^N_t(x)\big)  ,  
\end{aligned}
\end{cases}
\end{equation}
which is a discrete version of the hydrodynamic equation \eqref{eq:hdl}.  
Above, recall that $\Delta^N$ and $\widetilde{\nabla}^N$ denote the discrete differential operators defined in \eqref{eq:discrete_operators_definition}.

The strategy of our proof uses the  entropy method
\cite{guo1988nonlinear} (see also \cite[Chapters 4 and 5]{kipnis1999scaling} for a pedagogical review), and therefore we will proceed in three steps showing tightness of the sequences of probability measures induced by the empirical measures, then characterize the limit points and finally from the uniqueness of the limit we will conclude the convergence result. 
To that end, from  ~\cite[Lemma A1.5.1]{kipnis1999scaling}, for any $G\in C^2(\mathbb{T})$, 
\begin{equation*}
M_t^{v,N}(G)=\langle \pi^{v,N}_t,G\rangle -\langle \pi^{v,N}_0,G\rangle-\int_0^t L_N\langle \pi^{v,N}_s,G\rangle ds
\end{equation*} 
and $M^{v,N}_t(G)^2 - \langle M^{v,N}(G)\rangle_t$ with 
\begin{equation*}
\langle M^{v,N}(G)\rangle_t
= \int_0^t 
\big( L_N\langle \pi^{v,N}_s,G\rangle^2 
-2\langle \pi^{v,N}_s,G\rangle 
L_N \langle \pi^{v,N}_s,G\rangle\big) ds 
\end{equation*}
are martingales -the Dynkin's martingales -  with respect to the natural filtration of the process. 
A direct computation shows  that
\begin{equation}
\label{eq:dynkin_volume}
\begin{aligned}
M^{v,N}_t(G)
&=\langle \pi^{v,N}_t,G\rangle-\langle \pi^{v,N}_0,G\rangle
-\int_0^t\frac{1}{N}\sum_{x\in\mathbb{T}}
\eta_s(x)\Delta^N G(x/N) ds \\
&\quad +2\alpha_N\int_0^t\sum_{x\in\mathbb{T}}
\eta_s(x) \widetilde{\nabla}^N G(x/N)ds 
\end{aligned}
\end{equation}
and 
\begin{equation*}
\langle M^{v,N}(G)\rangle_t
=\int_0^t \frac{1}{N^2}
\sum_{x\in\mathbb{T}_N}\big(\eta_s(x+1)-\eta_s(x)\big)^2\big(\nabla^N G(x/N)\big)^2ds.
\end{equation*}
Similarly for the energy, we have the following expression for the Dynkin martingale, as well as its quadratic variation for the energy empirical measure:
\begin{equation}
\label{eq:dynkin_energy}
\begin{split}
M^{e,N}_t(G)
&=\langle \pi^{e,N}_t,G\rangle - \langle \pi^{e,N}_0,G\rangle -\int_0^t\frac{1}{N}\sum_{x\in\mathbb{T}_N}\eta_s(x)^2 \Delta^NG(x/N)ds \\
&\quad +2\alpha_N \int_0^t \sum_{x\in\mathbb{T}_N}
\eta_s(x+1)\eta_s(x)\nabla^NG(x/N) ds 
\end{split}
\end{equation}
and 
\begin{equation}
\label{eq:QV energy}
\langle M^{e,N}(G)\rangle_t
=\int_0^t \frac{1}{N^2} \sum_{x\in\mathbb{T}_N}\big(\eta^2_s(x+1)-\eta^2_s(x)\big)^2
(\nabla^N G(x/N))^2 ds.
\end{equation}
To show \cref{thm:HDL weak regime} and \cref{thm:HDL_volume}, we will rely on the above Dynkin's martingale decompositions. 
To that end, we will show that each term in the decomposition is tight in \cref{subsec:HDL_tightness} and then characterize the limiting points in \cref{subsec:clp}. 
The proof of \cref{thm:HDL_energy} will be separately given in \cref{subsec:HDL_quantitative_energy}.

\subsection{Tightness}
\label{subsec:HDL_tightness}
\subsubsection{Volume empirical measures}
Here we notice that, since volume takes values in $\mathbb R$, it is not possible to consider the empirical measure as process which takes values in a space of positive measures, as it is usually done for non-negative valued processes such as particle system (see \cite{kipnis1986central}).   
Therefore, we need to consider the empirical measure as a process which is taking values on the Sobolev space with a negative index, and then project it on each time $t$. 
We first show the result below, following \cite[Chapter 11]{kipnis1999scaling}. 

\begin{lemma}
\label{lem:tightness_volume}
Fix any $\kappa\ge 1$ and let $m>5/2$.
Then, the sequence of probability measures $\{Q^{v,N}_m\}_{N\in\mathbb N}$ is tight in $D([0,T],H^{-m}(\mathbb T))$.  
\end{lemma}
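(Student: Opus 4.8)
\textbf{Proof proposal for \cref{lem:tightness_volume}.}

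The plan is to apply the standard Mitoma-type tightness criterion for $H^{-m}(\mathbb T)$-valued processes: since $H^{-m}(\mathbb T)$ is nuclear with respect to the scale of Sobolev spaces, it suffices to show that for each fixed element $h_z$ of the orthonormal basis \eqref{eq:sobolev_CONS_definition}, the sequence of real-valued processes $\{\langle \pi^{v,N}_\cdot, h_z\rangle\}_{N}$ is tight in $D([0,T],\mathbb R)$, together with a uniform bound on $\mathbb E_N[\|\pi^{v,N}_t\|_{-m}^2]$ that controls the ``infinitely many coordinates'' and places the limit in $H^{-m}$ for $m>5/2$. The coordinate-wise tightness will be obtained from the Dynkin decomposition \eqref{eq:dynkin_volume} (with $G=h_z$) via Aldous' criterion, controlling the drift term and the martingale term separately.

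First I would establish the a priori second-moment bound. Writing $\mathbb E_N[\langle \pi^{v,N}_t,h_z\rangle^2]$ in terms of the discrete profile $v^N_t(x)$ and the correlation function $\varphi^N_t(x,y)$ via \eqref{eq:correlation_volume_definition}, the diagonal terms contribute $N^{-2}\sum_x e^N_t(x)$ and the off-diagonal terms contribute a sum of $\varphi^N_t(x,y)$ over $V^{(2)}_N$. By \cref{lem:correlation_estimate} the latter is $O(N \cdot N^{-1})=O(1)$ after the $1/N^2$ normalization times $N^2$ pairs — more precisely $N^{-2}\sum_{(x,y)\in V^{(2)}_N}|\varphi^N_t(x,y)|\le C$; for the diagonal piece one uses the energy profile bound, which follows from \cref{thm:HDL_energy} / the discrete maximum principle for \eqref{eq:discrete_hdl} under the initial-measure assumptions, or directly from \cref{thm:fourth-moment_bound} integrated in time. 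Summing the resulting bound against the eigenvalue weights $\gamma_z^{-m}=(1+4\pi^2z^2)^{-m}$, which is summable precisely when $m>1/2$, gives $\sup_{t\le T}\mathbb E_N[\|\pi^{v,N}_t\|_{-m}^2]\le C$; the stronger requirement $m>5/2$ enters when handling the drift and quadratic-variation terms below, which carry extra powers of $z$ coming from $\Delta^N h_z$ and $\nabla^N h_z$.

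Next, for fixed $z$, I would verify Aldous' condition for $\langle\pi^{v,N}_\cdot,h_z\rangle$. For the drift part of \eqref{eq:dynkin_volume}, note $\Delta^N h_z(x/N)$ is uniformly bounded by $Cz^2$ and $\widetilde\nabla^N h_z(x/N)$ by $C|z|$, while $\alpha_N=\alpha N^{-\kappa}$ with $\kappa\ge 1$, so $2\alpha_N\sum_x \eta_s(x)\widetilde\nabla^N h_z(x/N)$ has expectation bounded using $\sum_x|v^N_s(x)|\le CN$ and hence is $O(1)$ uniformly; the time-integral over $[\tau,\tau+\theta]$ is then $O(\theta)$ in $L^1$, by Cauchy–Schwarz and the second-moment bound applied to the non-expectation (centered) contribution. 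For the martingale part, from the explicit quadratic variation $\langle M^{v,N}(h_z)\rangle_t = \int_0^t N^{-2}\sum_x(\eta_s(x+1)-\eta_s(x))^2(\nabla^N h_z(x/N))^2 ds$, bound $(\nabla^N h_z)^2\le Cz^2$ and $(\eta_s(x+1)-\eta_s(x))^2\le 2\eta_s(x+1)^2+2\eta_s(x)^2$, so $\mathbb E_N[\langle M^{v,N}(h_z)\rangle_{\tau+\theta}-\langle M^{v,N}(h_z)\rangle_\tau]\le Cz^2 N^{-2}\cdot N\cdot\theta\cdot\sup_s\max_x e^N_s(x)\le Cz^2\theta/N\to 0$; together with the $L^1$-bound on the drift increment this gives the Aldous estimate. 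Initial tightness of $\langle\pi^{v,N}_0,h_z\rangle$ follows from \eqref{eq:assumption_initial_measure_volume} and \eqref{eq:assumption_initial_measure_correlation}.

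I expect the main obstacle to be the \emph{a priori second-moment control} on the volume empirical measure, i.e. the absence of any direct bound on $\mathbb E_N[\eta_t(x)^2]$ — this is exactly why \cref{lem:correlation_estimate} and \cref{thm:fourth-moment_bound} are invoked as black boxes, and care is needed so that the off-diagonal sum of correlations does not blow up (it is borderline: $N^2$ pairs times $C/N$ times $N^{-2}$ is $O(1/N)$ for the centered part, while the product-of-means part $N^{-2}(\sum_x v^N_s(x)h_z(x/N))^2$ is $O(1)$). A secondary technical point is tracking the $z$-dependence carefully enough to justify summability against $\gamma_z^{-m}$ with the stated threshold $m>5/2$ rather than just $m>1/2$; this is where the factors $z^2$ from $\Delta^N h_z$ in the drift and the Sobolev embedding needed later for the characterization of limit points force the larger index.
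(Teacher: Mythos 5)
Your route coincides in all essentials with the paper's: the Dynkin decomposition \eqref{eq:dynkin_volume} with $G=h_z$, the two-point correlation estimate of \cref{lem:correlation_estimate} to control the drift and initial terms, Aldous' criterion for the time regularity of each coordinate process, and the observation that the factor $z^4$ produced by $\Delta^N h_z$ is what forces $m>5/2$ rather than $m>1/2$.

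There is, however, one genuine gap in the a priori bound. You only establish $\sup_{t\le T}\mathbb E_N\big[\|\pi^{v,N}_t\|_{-m}^2\big]\le C$, i.e.\ a fixed-time moment bound, whereas tightness in $D([0,T],H^{-m}(\mathbb T))$ requires control of $\sup_{0\le t\le T}\|\pi^{v,N}_t\|_{-m}$ in probability \emph{and} the uniform-in-time tail condition $\lim_{n}\limsup_{N}\mathbb E_N\big[\sup_{0\le t\le T}\sum_{|z|\ge n}\langle\pi^{v,N}_t,h_z\rangle^2\gamma_z^{-m}\big]=0$. A fixed-time bound together with coordinate-wise tightness does not deliver these: $H^{-m}(\mathbb T)$ for fixed $m$ is not nuclear, so the Mitoma criterion you invoke does not apply as stated, and Chebyshev at each fixed $t$ does not control the supremum over $t$. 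The paper's \cref{lem:HDL_tighness_key_estimate} addresses exactly this point by bounding $\mathbb E_N\big[\sup_{0\le t\le T}|\langle\pi^{v,N}_t,h_z\rangle|^2\big]$ with the supremum \emph{inside} the expectation, term by term in the decomposition: Doob's inequality for the martingale, and $\sup_{t}\big|\int_0^t\cdots\,ds\big|\le\int_0^T|\cdots|\,ds$ followed by Cauchy--Schwarz and the correlation estimate for the two integral terms. Since you already have the decomposition in hand for the Aldous step, this is a repair rather than a change of strategy, but as written the passage from your moment bound to the two conditions of the tightness criterion would fail. A secondary omission: when passing from the modified modulus of continuity $\omega'_\delta$ to $\omega_\delta$, one must also control the size of the largest jump $\sup_t|\langle\pi^{v,N}_t-\pi^{v,N}_{t^-},h_z\rangle|$, which the paper does using the conservation of the total volume together with \eqref{eq:assumption_initial_measure_correlation}; your proposal does not mention this step.
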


Before giving a proof of \cref{lem:tightness_volume}, let us recall some basic notion regarding the tightness of probability measures in our setting.

\begin{definition}
For $\delta>0$ and a path $\pi$ in ${D([0,T],H^{-m}(\mathbb T))}$, the uniform modulus of continuity of $\pi$, is defined by
\begin{equation*}
\omega_{\delta}(\pi)=\sup_{\substack{|s-t|<\delta, \\0\leq{s,t}\leq{T}}}\|\pi_{t}-\pi_{s}\|_{-m}.
\end{equation*}
\end{definition}
The first result gives sufficient conditions for a subset to be weakly relatively compact.
\begin{lemma}
A subset $A$ of $D([0,T],H^{-m}(\mathbb T))$ is relatively compact for the uniform weak topology if the next two conditions hold:
\begin{equation*}
\sup_{Y\in{A}}\sup_{0\leq{t}\leq{T}}\|\pi_{t}\|_{-m}<\infty
\quad \textrm{and}\quad \lim_{\delta\rightarrow{0}}\sup_{\pi\in{A}}\omega_{\delta}(\pi)=0.
\end{equation*}
\end{lemma}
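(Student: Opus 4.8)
The plan is to prove this as a Hilbert-space version of the Arzel\`a--Ascoli theorem, via a diagonal extraction over a countable set of test functions followed by the Riesz representation theorem. First I would fix notation: set $R:=\sup_{\pi\in A}\sup_{0\le t\le T}\|\pi_t\|_{-m}<\infty$ and $\bar\omega(\delta):=\sup_{\pi\in A}\omega_\delta(\pi)$, so that $\bar\omega(\delta)\to 0$ as $\delta\to 0$ by hypothesis; note already that, since $\omega_\delta(\pi)\to 0$ for each $\pi\in A$, every such path is uniformly continuous for $\|\cdot\|_{-m}$ and hence lies in $C([0,T],H^{-m}(\mathbb T))$. Since $H^{-m}(\mathbb T)$ is a separable Hilbert space whose dual is $H^m(\mathbb T)$ (with the pairing $\langle u,g\rangle$ extending the $L^2$ scalar product and obeying $|\langle u,g\rangle|\le\|u\|_{-m}\|g\|_m$), I would pick a countable set $\{g_k\}_{k\ge 1}\subset H^m(\mathbb T)$ dense in the unit ball of $H^m(\mathbb T)$ --- for instance finite rational combinations of the basis functions $h_z$ from \eqref{eq:sobolev_CONS_definition} --- and recall that on the closed ball $B_R\subset H^{-m}(\mathbb T)$ of radius $R$ the weak topology is metrized by $d(u,v)=\sum_{k\ge 1}2^{-k}\big(1\wedge|\langle u-v,g_k\rangle|\big)$; the uniform weak topology on $B_R$-valued paths is then the one of the metric $\mathbf d(\pi,\tilde\pi)=\sup_{0\le t\le T}d(\pi_t,\tilde\pi_t)$, and since $B_R$ is weakly closed all relevant limit paths remain $B_R$-valued.

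Next, given any sequence $(\pi^n)_n$ in $A$, for each fixed $k$ the scalar functions $t\mapsto\langle\pi^n_t,g_k\rangle$ are bounded by $R$ and satisfy $|\langle\pi^n_t,g_k\rangle-\langle\pi^n_s,g_k\rangle|\le\|\pi^n_t-\pi^n_s\|_{-m}\le\bar\omega(|t-s|)$, hence are equicontinuous uniformly in $n$; the scalar Arzel\`a--Ascoli theorem together with a diagonal argument over $k=1,2,\dots$ then produces a subsequence --- still denoted $(\pi^n)$ --- along which $t\mapsto\langle\pi^n_t,g_k\rangle$ converges uniformly on $[0,T]$ for every $k$. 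To identify a limit I would, for a general $g\in H^m(\mathbb T)$ and $\varepsilon>0$, choose $g_k$ with $\|g-g_k\|_m<\varepsilon$ (after rescaling) and use $|\langle\pi^n_t,g\rangle-\langle\pi^{n'}_t,g\rangle|\le|\langle\pi^n_t,g_k\rangle-\langle\pi^{n'}_t,g_k\rangle|+2R\varepsilon$ uniformly in $t$ to conclude that $t\mapsto\langle\pi^n_t,g\rangle$ is uniformly Cauchy and converges uniformly to a continuous function $\Phi_\cdot(g)$. Since $g\mapsto\Phi_t(g)$ is linear with $|\Phi_t(g)|\le R\|g\|_m$, the Riesz representation theorem furnishes $\pi_t\in H^{-m}(\mathbb T)$ with $\|\pi_t\|_{-m}\le R$ and $\langle\pi_t,g\rangle=\Phi_t(g)$ for all $g$, and $t\mapsto\pi_t$ is weakly continuous because each $t\mapsto\langle\pi_t,g\rangle$ is continuous. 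Finally, since all $\pi^n$ and $\pi$ take values in $B_R$ and $\sup_{0\le t\le T}|\langle\pi^n_t-\pi_t,g_k\rangle|\to 0$ for every $k$, a tail estimate on the series defining $\mathbf d$ (split at index $K$ with $\sum_{k>K}2^{-k}<\varepsilon$) gives $\mathbf d(\pi^n,\pi)\to 0$, so every sequence in $A$ admits a subsequence converging in the uniform weak topology and $A$ is relatively compact.

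I do not expect a serious obstacle, since this is soft functional analysis; the only points requiring care are making precise the notion of the uniform weak topology and verifying it is metrized as above on the bounded set $B_R$, the passage from the countable dense family $\{g_k\}$ to arbitrary test functions in $H^m(\mathbb T)$ (handled by the $2R\varepsilon$ bound, which crucially uses the uniform estimate $R$), and the fact that the limiting path is only guaranteed to be weakly --- not $\|\cdot\|_{-m}$--- continuous, which is harmless because relative compactness is asserted only for the uniform weak topology.
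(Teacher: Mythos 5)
Your argument is correct, and there is nothing in the paper to compare it against: the lemma is stated without proof, as the standard Arzel\`a--Ascoli-type compactness criterion for the uniform weak topology (the Hilbert-space analogue of the criterion in Kipnis--Landim, Chapters~4 and~11), so your write-up actually supplies the missing argument rather than diverging from one. All the steps check out: the uniform bound $R$ and the duality inequality $|\langle u,g\rangle|\le\|u\|_{-m}\|g\|_m$ give pointwise boundedness and equicontinuity of the scalar maps $t\mapsto\langle\pi^n_t,g_k\rangle$; scalar Arzel\`a--Ascoli plus a diagonal extraction and the $2R\varepsilon$ approximation upgrade this to uniform convergence of $t\mapsto\langle\pi^n_t,g\rangle$ for every $g\in H^m(\mathbb T)$; Riesz representation identifies the limit; and the metrizability of the weak topology on the bounded set $B_R$ (which you correctly note requires the uniform bound) converts sequential convergence into convergence for the uniform weak topology. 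One small improvement: you are unnecessarily pessimistic about the regularity of the limit path. Since
\begin{equation*}
\|\pi_t-\pi_s\|_{-m}
=\sup_{\|g\|_m\le 1}|\langle \pi_t-\pi_s,g\rangle|
=\sup_{\|g\|_m\le 1}\lim_{n\to\infty}|\langle \pi^n_t-\pi^n_s,g\rangle|
\le \bar\omega(|t-s|),
\end{equation*}
the uniform modulus of continuity passes to the limit, so the limit path is norm-continuous, not merely weakly continuous. This is worth recording both because it confirms the limit lies in $C([0,T],H^{-m}(\mathbb T))\subset D([0,T],H^{-m}(\mathbb T))$ (so the closure stays inside the stated path space) and because it is precisely the mechanism by which the second condition later forces limit points of the tight sequences to be concentrated on continuous trajectories.
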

From this lemma we can transform the concept of tightness by replacing for compactness its Arzel\`{a}-Ascoli characterization.
Thus, we get a criterion for tightness of a sequence of probability measures defined on $D([0,T],H^{-m}(\mathbb T))$.
\begin{lemma}
A sequence $\{P_{N}, N\geq{1}\}$ of probability measures defined on $D([0,T],H^{-m}(\mathbb T))$ is tight if the following two conditions hold:
\begin{equation*}
    \begin{split}
       & (\text{a}) \lim_{A\rightarrow{\infty}}\limsup_{N\rightarrow{\infty}}P_{N}\Big(\sup_{0\leq{t}\leq{T}}\|\pi_{t}\|_{-m}>A\Big)=0\\
       &(\text{b})\lim_{\delta\rightarrow{0}}\limsup_{N\rightarrow{\infty}}P_{N}\big(\omega_{\delta}(\pi)\geq{\varepsilon}\big)=0,\;\text{
for every}\; \varepsilon>0.
    \end{split}
\end{equation*}
\end{lemma}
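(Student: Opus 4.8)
The plan is to verify the definition of tightness directly, i.e.\ to produce, for each $\varepsilon>0$, a relatively compact subset $K_\varepsilon\subset D([0,T],H^{-m}(\mathbb T))$ with $\sup_{N\ge1}P_N(K_\varepsilon^c)\le\varepsilon$ (its closure is then the compact set required by the definition). Since the relatively compact subsets are described by the previous lemma, the whole problem reduces to choosing a norm level $A$ and a sequence of scales $(\delta_k)_{k\ge1}$ so that the associated set from that lemma has $P_N$-mass at least $1-\varepsilon$, uniformly in $N$.

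First I would fix $\varepsilon>0$ and use hypothesis (a) to choose $A=A_\varepsilon>0$ with $\limsup_N P_N(\sup_{0\le t\le T}\|\pi_t\|_{-m}>A)<\varepsilon/4$; because $\sup_{0\le t\le T}\|\pi_t\|_{-m}$ is a.s.\ finite under each fixed $P_N$, I may enlarge $A$ so that in fact $P_N(\sup_{0\le t\le T}\|\pi_t\|_{-m}>A)<\varepsilon/4$ for every $N$. Then, using hypothesis (b) with the value $1/k$ in place of $\varepsilon$, I would pick $\delta_k\downarrow0$ so that $\limsup_N P_N(\omega_{\delta_k}(\pi)>1/k)<\varepsilon\,2^{-k-2}$ for each $k\ge1$, and set
\begin{equation*}
\mathcal K=\Big\{\pi:\ \sup_{0\le t\le T}\|\pi_t\|_{-m}\le A\Big\}\cap\bigcap_{k\ge1}\big\{\pi:\ \omega_{\delta_k}(\pi)\le 1/k\big\}.
\end{equation*}
Since $\omega_\delta(\pi)$ is nondecreasing in $\delta$, for every $\eta>0$ and every $\delta\le\delta_k$ with $1/k<\eta$ one has $\omega_\delta(\pi)\le\omega_{\delta_k}(\pi)\le1/k<\eta$ for all $\pi\in\mathcal K$; hence $\sup_{\pi\in\mathcal K}\sup_t\|\pi_t\|_{-m}<\infty$ and $\lim_{\delta\to0}\sup_{\pi\in\mathcal K}\omega_\delta(\pi)=0$, so $\mathcal K$ is relatively compact by the previous lemma. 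It is Borel because $\sup_t\|\pi_t\|_{-m}$ and $\omega_\delta(\pi)$ are measurable functionals on the Skorohod space (each may be computed as a supremum over a countable dense set of time pairs together with left limits). A union bound then gives $\limsup_N P_N(\mathcal K^c)\le\varepsilon/4+\sum_{k\ge1}\varepsilon\,2^{-k-2}<\varepsilon$, so there is $N_0$ with $P_N(\mathcal K^c)<\varepsilon$ for all $N\ge N_0$.

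To finish, I would absorb the finitely many indices $N<N_0$: each such $P_N$ is a single probability measure on the Polish space $D([0,T],H^{-m}(\mathbb T))$, hence tight, so there is a compact set $C_N$ with $P_N(C_N^c)<\varepsilon$. Taking $K_\varepsilon=\overline{\mathcal K}\cup C_1\cup\cdots\cup C_{N_0-1}$, a finite union of compact sets and thus compact, one has $P_N(K_\varepsilon^c)\le P_N(\mathcal K^c)<\varepsilon$ for $N\ge N_0$ and $P_N(K_\varepsilon^c)\le P_N(C_N^c)<\varepsilon$ for $N<N_0$, so $\sup_N P_N(K_\varepsilon^c)\le\varepsilon$ as desired. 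The argument is entirely soft and I do not anticipate a real obstacle; the only points deserving a line of care are the Borel measurability of the functionals $\sup_t\|\pi_t\|_{-m}$ and $\omega_\delta(\pi)$ on $D([0,T],H^{-m}(\mathbb T))$, and the routine patching of the initial indices $N<N_0$ (where Polishness of the path space is what guarantees tightness of each individual $P_N$). Everything else is the classical Arzel\`a--Ascoli/Prohorov reduction already set up by the two preceding lemmas.
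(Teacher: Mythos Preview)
Your argument is correct and is the standard Prohorov/Arzel\`a--Ascoli reduction. The paper does not give a proof of this lemma at all: it is stated without proof, introduced only by the sentence ``From this lemma we can transform the concept of tightness by replacing for compactness its Arzel\`a--Ascoli characterization,'' i.e.\ it is treated as a routine consequence of the preceding relative-compactness criterion. So there is nothing to compare against; your write-up simply fills in the details the paper omits.
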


Now, we show the tightness of the measures associated to the volume. 
In order to show that the sequence $\{Q^{v,N}_m\}_{N}$ is tight, we have to check that the next two limits hold
\begin{equation*}
\begin{split}
&\lim_{A\rightarrow{+\infty}}\limsup_{N\rightarrow{+\infty}}Q^{v,N}_m\Big(\sup_{0\leq{t}\leq{T}}\|\pi^v_{t}\|_{-m}^{2} > A \Big) =0, \\&
\lim_{\delta\rightarrow{0}}\limsup_{N\rightarrow{+\infty}}Q^{v,N}_m \big(\omega_{\delta}(\pi^v)\geq\varepsilon \big)=0
\end{split}
\end{equation*}
for any $\varepsilon>0$.

\begin{lemma} 
\label{lem:HDL_tighness_key_estimate}
There exists some $C=C(T)>0$ such that for every $z\in\mathbb Z$, 
\begin{equation*}
{\limsup
_{N\rightarrow{+\infty}}}\,\,
\mathbb{E}_{N}\Big[\sup_{0\leq{t}\leq{T}}|\langle\pi^{v,N}_{t},h_{z}\rangle|^{2}\Big] 
\le C(\alpha^2 z^2\mathbf{1}_{\kappa=1}
+ z^4).
\end{equation*}
\end{lemma}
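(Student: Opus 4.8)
\textbf{Strategy.} The plan is to use the Dynkin martingale decomposition \eqref{eq:dynkin_volume} with the specific test function $G = h_z$, bound each of the three resulting terms (the drift from the Laplacian, the drift from the Hamiltonian part, and the martingale term) in $L^2(\mathbb P_N)$ uniformly in $t\in[0,T]$, and then recombine. The key inputs are: the a priori bounds on the discrete volume profile $v^N_t(x)$ coming from the discrete heat equation \eqref{eq:discrete_hdl} together with the maximum principle (which gives $\sup_t\max_x|v^N_t(x)|\le C$ under Assumption \ref{assump:initial_measure}), the two-point correlation estimate \cref{lem:correlation_estimate} (which gives $\sup_t\|\varphi^N_t\|_{\ell^\infty(V^{(2)}_N)}\le C/N$), and Doob's $L^2$ inequality for the martingale term.

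\textbf{Steps.} First I would write, from \eqref{eq:dynkin_volume} with $G=h_z$,
\begin{equation*}
\langle \pi^{v,N}_t,h_z\rangle
= \langle \pi^{v,N}_0,h_z\rangle
+ \int_0^t \frac1N\sum_{x}\eta_s(x)\Delta^N h_z(x/N)\,ds
- 2\alpha_N\int_0^t\sum_x \eta_s(x)\widetilde\nabla^N h_z(x/N)\,ds
+ M^{v,N}_t(h_z),
\end{equation*}
so that by Cauchy--Schwarz it suffices to bound the square-expectation of each of the four terms. For the initial term, $\mathbb E_N[\langle\pi^{v,N}_0,h_z\rangle^2]$ is controlled by $\mathbb E_N[\langle\pi^{v,N}_0,h_z\rangle]^2$ plus the variance; the mean is $\frac1N\sum_x v^N_0(x)h_z(x/N)$ which is $O(1)$ (indeed $o(1)$ for $z\neq0$), and the variance is $\frac1{N^2}\sum_{x\neq y}\varphi^N_0(x,y)h_z(x/N)h_z(y/N) + \frac1{N^2}\sum_x e^N_0(x)$, which is $O(1/N)$ by \eqref{eq:assumption_initial_measure_correlation} and \eqref{eq:assumption_initial_meaure_energy}. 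For the Laplacian drift term, $|\Delta^N h_z(x/N)|\le C z^2$ (with an error from discretization that is lower order), so by Minkowski's integral inequality its $L^2(\mathbb P_N)$ norm is at most $C z^2 \int_0^T \big(\frac1N\sum_x \mathbb E_N[\eta_s(x)^2]\big)^{1/2}\,ds$; here I would use that $\frac1N\sum_x e^N_s(x) = \frac1N\sum_x \mathbb E_N[\eta_s(x)^2]$ is conserved (the total energy is conserved) and hence equals $\frac1N\sum_x e^N_0(x)\le C$, giving a bound $O(z^4)$ after squaring. For the Hamiltonian drift term, $|\widetilde\nabla^N h_z(x/N)|\le C|z|$ and the prefactor is $2\alpha_N = 2\alpha N^{-\kappa}$, so when $\kappa>1$ this term is $O(N^{-(\kappa-1)}|z|)\to 0$ and when $\kappa=1$ it is $O(|z|)$ times a conserved energy bound, giving $O(\alpha^2 z^2\mathbf 1_{\kappa=1})$ after squaring — this is where the indicator $\mathbf 1_{\kappa=1}$ enters. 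Finally, for the martingale, by Doob's inequality $\mathbb E_N[\sup_{t\le T}M^{v,N}_t(h_z)^2]\le 4\,\mathbb E_N[\langle M^{v,N}(h_z)\rangle_T]$, and from the explicit quadratic variation
\begin{equation*}
\langle M^{v,N}(h_z)\rangle_T
= \int_0^T \frac1{N^2}\sum_x \big(\eta_s(x+1)-\eta_s(x)\big)^2 \big(\nabla^N h_z(x/N)\big)^2\,ds
\end{equation*}
together with $|\nabla^N h_z(x/N)|\le C|z|$ and $(\eta_s(x+1)-\eta_s(x))^2\le 2\eta_s(x+1)^2+2\eta_s(x)^2$, this is bounded by $C z^2\int_0^T\frac1{N^2}\sum_x\mathbb E_N[\eta_s(x)^2]\,ds\le C z^2 T/N$, which is even $o(z^2)$. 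Summing the four contributions yields the claimed bound $C(\alpha^2 z^2\mathbf 1_{\kappa=1}+z^4)$.

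\textbf{Main obstacle.} The delicate point is obtaining $\sup_{t\le T}\frac1N\sum_x\mathbb E_N[\eta_s(x)^2]\le C$ uniformly in $N$ — i.e. a uniform-in-time control of the averaged second moment — since the paper repeatedly emphasizes that second-moment bounds are not available a priori and must be extracted from the conservation law or from correlation estimates. Here the saving grace is precisely that the \emph{total} energy $\sum_x\eta_t(x)^2$ is conserved by the dynamics, so $\mathbb E_N[\frac1N\sum_x\eta_t(x)^2] = \mathbb E_N[\frac1N\sum_x\eta_0(x)^2] = \frac1N\sum_x e^N_0(x)$, which is bounded by Assumption \ref{assump:initial_measure} (via \eqref{eq:assumption_initial_meaure_energy}, or directly for the product measure in the Example). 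One must be a little careful that this is an identity in expectation, not pathwise uniform control, but for the $L^2$-bounds above only the expectation of the integrand is needed. I would also double-check the discretization errors in $\Delta^N h_z$ and $\widetilde\nabla^N h_z$: these are $O(z^4/N^2)$ and $O(z^3/N^2)$ respectively and are harmlessly absorbed into the stated bound for fixed $z$.
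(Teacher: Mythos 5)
Your proposal is correct and uses the same skeleton as the paper (Dynkin decomposition \eqref{eq:dynkin_volume} with $G=h_z$, term-by-term $L^2$ bounds, Doob's inequality for the martingale), but the way you control the drift terms is genuinely different. The paper splits $\mathbb E_N[\eta_s(x)\eta_s(y)]=\varphi^N_s(x,y)+v^N_s(x)v^N_s(y)$ and invokes the two-point correlation estimate (\cref{lem:correlation_estimate}) for the first piece and the boundedness of the discrete heat-equation profile for the second; you instead bound $|\mathbb E_N[\eta_s(x)\eta_s(y)]|\le(\mathbb E_N[\eta_s(x)^2]\mathbb E_N[\eta_s(y)^2])^{1/2}$ by Cauchy--Schwarz and close the estimate with the pathwise conservation of $\sum_x\eta_t(x)^2$, which gives $\frac1N\sum_x e^N_s(x)=\frac1N\sum_x e^N_0(x)\le C$ by \eqref{eq:assumption_initial_meaure_energy}. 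Your route is more elementary and self-contained: it avoids the main technical input \cref{lem:correlation_estimate} entirely for this lemma, at the price of losing the extra factor $1/N$ on the fluctuating part that the correlation decomposition provides --- a loss that is irrelevant here since both arguments land on the same final bound, with the indicator $\mathbf 1_{\kappa=1}$ entering in both cases through the prefactor $\alpha_N N$ of the antisymmetric term. One small remark applying to both proofs: the initial term $\mathbb E_N[\langle\pi^{v,N}_0,h_z\rangle^2]$ is only $O(1)$ (its mean converges to the $z$-th Fourier coefficient of $v_0$, which need not vanish), so what is really proved is the bound $C(1+\alpha^2z^2\mathbf 1_{\kappa=1}+z^4)$; you state this honestly, the paper claims the initial term vanishes, and in either case the extra constant is harmless for the summability over $z$ needed in the subsequent corollary.
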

\begin{proof}
To prove the lemma, we estimate separately each term in the martingale decomposition \eqref{eq:dynkin_volume} with $G=h_z$. 
First, note that a simple computation together with \eqref{eq:assumption_initial_measure_correlation}, shows that
\begin{equation*}
\lim_{N\rightarrow{+\infty}}\mathbb{E}_{N}
\big[\langle \pi^{v,N}_{0},h_{z}\rangle^2 \big]=0.
\end{equation*}
The contribution of the martingale also vanishes, by combining Doob's inequality with the assumption \eqref{eq:assumption_initial_meaure_energy}.   
Now, we analyze the integral terms. 
First, we start with the contribution from the symmetric part. 
To that end note that 
\begin{equation*}\begin{split}
&\mathbb E_N\Big[\Big(\sup_{0\leq{t}\leq{T}}\int_0^t\frac{1}{N}\sum_{x\in\mathbb{T}}
\eta_s(x)\Delta^N h_z(x/N) ds \Big)^2\Big] \\
&\quad\le T \mathbb E_N\Big[\int_0^T
\frac{1}{N^2}\sum_{x,y\in\mathbb{T}_N}\eta_s(x)\eta_s(y)\Delta^N h_z(x/N)\Delta^N h_z(y/N)ds \Big]
\\
&\quad\leq \frac{T}{N^2} \sum_{x,y\in\mathbb{T}_N}\int_0^T\varphi_s^N(x,y)\Delta^N h_z(x/N)\Delta^N h_z(y/N)ds
\\
&\qquad+\frac{T}{N^2}\sum_{x,y\in\mathbb{T}_N}\int_0^T v^N_s(y) v_s^N(x) \Delta^N h_z(x/N)\Delta^N h_z(y/N)ds
\\&\quad 
\le C(N^{-1}+1) z^4
\end{split}
\end{equation*}
for some $C=C(T)>0$ {where we used the correlation estimate (\cref{lem:correlation_estimate})}. 

For the anti-symmetric part we reason as follows:
\begin{equation*}\begin{split}
&\mathbb E_N\Big[\Big(\sup_{0\leq{t}\leq{T}}\int_0^t\alpha_N 
\sum_{x\in\mathbb{T}}\eta_s(x)\widetilde{\nabla}^N h_z(x/N) ds \Big)^2\Big] \\
&\quad\le T \mathbb E_N\Big[\int_0^T\alpha_N^2\sum_{x,y\in\mathbb{T}_N}\eta_s(x)\eta_s(y)\widetilde{\nabla}^N h_z(x/N) \widetilde{\nabla}^N h_z(x/N) ds\Big]
\\
&\quad\leq T \alpha_N^2\sum_{x,y\in\mathbb{T}_N}\int_0^T\varphi_s^N(x,y)\widetilde{\nabla}^N h_z(x/N) \widetilde{\nabla}^N h_z(x/N)ds
\\
&\qquad+T\alpha_N^2\sum_{x,y\in\mathbb{T}_N}\int_0^T v^N_s(y) v_s^N(x) \widetilde{\nabla}^N h_z(x/N) \widetilde{\nabla}^N h_z(x/N)ds\\
&\quad\le C \alpha_N^2(N+ N^2) z^2
\end{split}
\end{equation*}
for some $C=C(T)>0$. 
Hence, we complete the proof of the desired estimates. 
\end{proof}

\begin{corollary}
Assume $m>5/2$ and $\kappa\ge 1$. Then, it holds:
\begin{equation*}\begin{split}
&(1)\qquad\limsup_{N\rightarrow{+\infty}}\mathbb{E}_{N}\Big[\sup_{0\leq{t}\leq{T}}\|\pi_{t}^{v,N}\|_{-m}^{2}\Big] < \infty 
\\& (2)\qquad 
\lim_{n\rightarrow{+\infty}}\limsup_{N\rightarrow{+\infty}}\mathbb{E}_{N}\Big[\sup_{0\leq{t}\leq{T}}\sum_{|z|\geq{n}}(\langle \pi_{t}^{v,N},h_{z}\rangle)^{2}\gamma_{z}^{-m}\Big]=0.
\end{split}\end{equation*}
\end{corollary}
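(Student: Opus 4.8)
The plan is to deduce both assertions directly from the single-mode estimate of \cref{lem:HDL_tighness_key_estimate} together with the spectral description of the norm on $H^{-m}(\mathbb T)$. Since $\{h_z\}_{z\in\mathbb Z}$ is an orthonormal basis of $L^2(\mathbb T)$ diagonalising $1-\Delta$ with $(1-\Delta)h_z=\gamma_z h_z$ and $\gamma_z=1+4\pi^2z^2$, the dual norm admits the Parseval-type expression $\|\pi\|_{-m}^2=\sum_{z\in\mathbb Z}\langle\pi,h_z\rangle^2\gamma_z^{-m}$. As all summands are non-negative, for each path one has $\sup_{0\le t\le T}\|\pi^{v,N}_t\|_{-m}^2\le\sum_{z\in\mathbb Z}\big(\sup_{0\le t\le T}\langle\pi^{v,N}_t,h_z\rangle^2\big)\gamma_z^{-m}$, and the same inequality with $\sum_{|z|\ge n}$ on both sides. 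Taking $\mathbb E_N$ and interchanging it with the series by Tonelli's theorem gives
\begin{equation*}
\mathbb E_N\Big[\sup_{0\le t\le T}\|\pi^{v,N}_t\|_{-m}^2\Big]\le\sum_{z\in\mathbb Z}\mathbb E_N\Big[\sup_{0\le t\le T}\langle\pi^{v,N}_t,h_z\rangle^2\Big]\gamma_z^{-m},
\end{equation*}
and likewise for the tail sum over $|z|\ge n$.

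The next step is to feed in \cref{lem:HDL_tighness_key_estimate}, after noting that the constants produced in its proof are uniform in $N$: the initial term vanishes (hence is bounded) by \eqref{eq:assumption_initial_measure_correlation}, the Doob bound on the martingale part is $\le CN^{-1}z^2$ thanks to conservation of the total energy together with \eqref{eq:assumption_initial_meaure_energy}, the symmetric integral term is $\le C(N^{-1}+1)z^4$, and the anti-symmetric one is $C\alpha_N^2(N+N^2)z^2=C\alpha^2(N^{1-2\kappa}+N^{2-2\kappa})z^2\le 2C\alpha^2z^2$ when $\kappa\ge 1$. Consequently $\mathbb E_N[\sup_{t\le T}\langle\pi^{v,N}_t,h_z\rangle^2]\le C(1+z^4)$ for all large $N$, with $C$ independent of $N$ and $z$. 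Since $\gamma_z\ge 4\pi^2 z^2$ for $z\ne0$, the majorant $B(z):=C(1+z^4)\gamma_z^{-m}$ is comparable to $|z|^{4-2m}$ as $|z|\to\infty$, hence summable exactly because $m>5/2$; this gives an $N$-independent summable bound, so dominated convergence for series lets us move $\limsup_N$ inside the sum:
\begin{align*}
\limsup_{N\to\infty}\mathbb E_N\Big[\sup_{0\le t\le T}\|\pi^{v,N}_t\|_{-m}^2\Big]
&\le\sum_{z\in\mathbb Z}\Big(\limsup_{N\to\infty}\mathbb E_N\big[\sup_{t\le T}\langle\pi^{v,N}_t,h_z\rangle^2\big]\Big)\gamma_z^{-m}\\
&\le C\sum_{z\in\mathbb Z}\big(\alpha^2z^2\mathbf 1_{\kappa=1}+z^4\big)\gamma_z^{-m}<\infty,
\end{align*}
which is assertion (1).

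For assertion (2) I would use the same majorant without passing to the limit: for all large $N$,
\begin{equation*}
\mathbb E_N\Big[\sup_{0\le t\le T}\sum_{|z|\ge n}\langle\pi^{v,N}_t,h_z\rangle^2\gamma_z^{-m}\Big]\le\sum_{|z|\ge n}\mathbb E_N\big[\sup_{t\le T}\langle\pi^{v,N}_t,h_z\rangle^2\big]\gamma_z^{-m}\le\sum_{|z|\ge n}B(z),
\end{equation*}
and the right-hand side, being the tail of a convergent series, is independent of $N$ and tends to $0$ as $n\to\infty$; taking $\limsup_N$ and then letting $n\to\infty$ yields (2). The one genuinely non-routine point is the interchange of $\limsup_N$ and of $\sup_t$ with the infinite sum over frequencies, which is precisely what the uniform-in-$N$, summable bound $B(z)$ — available only when $m>5/2$ — is there to license; everything else is bookkeeping with Parseval's identity and Tonelli's theorem.
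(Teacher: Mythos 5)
Your proof is correct and follows essentially the same route as the paper: bound $\|\pi^{v,N}_t\|_{-m}^2$ by the weighted series $\sum_z\gamma_z^{-m}\langle\pi^{v,N}_t,h_z\rangle^2$, invoke \cref{lem:HDL_tighness_key_estimate} mode by mode, and use that $(1+z^4)\gamma_z^{-m}$ is summable precisely when $m>5/2$. Your additional care in verifying that the single-mode bounds are uniform in $N$ (so that $\limsup_N$ can be passed inside the series) makes explicit a step the paper leaves implicit, but it is the same argument.
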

\begin{proof}
First, let us show the first item. 
Since
\begin{equation*}
\limsup_{N\rightarrow{+\infty}}\mathbb{E}_{N}\Big[\sup_{0\leq{t}\leq{T}}\|\pi^{v,N}_{t}\|_{-m}^{2}\Big]
\le \limsup_{N\rightarrow{+\infty}}
\sum_{z\in{\mathbb{Z}}}\gamma_{z}^{-m} \mathbb{E}_{N}\Big[\sup_{0\leq{t}\leq{T}}\langle \pi^{v,N}_{t},h_{z}\rangle ^{2}\Big] ,
\end{equation*}
and from \cref{lem:HDL_tighness_key_estimate} the last display is bounded  as long as $m>5/2$ and $\kappa\ge 1$.
The proof of the second assertion is analogous and we omit the proof here. 
\end{proof}

Note that condition (1) above holds as a consequence of item (a) of the previous corollary. It remains now to prove (2) but this follows from the next lemma:
\begin{lemma}
For every $n\in{\mathbb{N}}$ and every $\varepsilon>{0}$,
\begin{equation*}
\lim_{\delta\rightarrow{0}}
\limsup_{N\rightarrow{+\infty}}
\mathbb{P}_N
\bigg(\sup_{\substack{|s-t|<\delta, \\0\leq{s,t}\leq{T}}} \,
\sum_{|z|\leq{n}}(\langle \pi^{v,N}_{t}-\pi^{v,N}_{s},h_{z}\rangle)^{2}\gamma_{z}^{-m}>\varepsilon\bigg)=0 . 
\end{equation*}
\end{lemma}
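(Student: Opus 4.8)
The plan is to reduce the statement to a finite sum over $|z|\le n$ of oscillation estimates for the scalar processes $t\mapsto \langle\pi^{v,N}_t,h_z\rangle$, and then control each such oscillation using the Dynkin martingale decomposition \eqref{eq:dynkin_volume} with $G=h_z$. Since the sum over $|z|\le n$ is finite, it suffices to show that for each fixed $z$ and every $\varepsilon>0$,
\begin{equation*}
\lim_{\delta\to 0}\limsup_{N\to\infty}\mathbb P_N\bigg(\sup_{\substack{|s-t|<\delta,\\ 0\le s,t\le T}}|\langle\pi^{v,N}_t-\pi^{v,N}_s,h_z\rangle|>\varepsilon\bigg)=0,
\end{equation*}
because $\gamma_z^{-m}\le 1$ and $(a+b+c)^2\le 3(a^2+b^2+c^2)$ lets us distribute the oscillation of the sum over the finitely many modes and the three pieces of the decomposition. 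Writing $\langle\pi^{v,N}_t-\pi^{v,N}_s,h_z\rangle = M^{v,N}_t(h_z)-M^{v,N}_s(h_z) + \int_s^t \frac1N\sum_x \eta_r(x)\Delta^N h_z(x/N)\,dr - 2\alpha_N\int_s^t\sum_x\eta_r(x)\widetilde\nabla^N h_z(x/N)\,dr$, I would treat the martingale term and the two integral terms separately.

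For the two integral (drift) terms, the key point is that they are absolutely continuous in time with a modulus controlled by the time increment: for $|s-t|<\delta$, the symmetric integral term is bounded in absolute value by $\delta$ times $\sup_{0\le r\le T}|\frac1N\sum_x\eta_r(x)\Delta^N h_z(x/N)|$, and similarly the antisymmetric term by $\delta$ times $\sup_r|2\alpha_N\sum_x\eta_r(x)\widetilde\nabla^N h_z(x/N)|$. Taking second moments of these suprema and applying the Cauchy--Schwarz inequality in $x,y$ together with the correlation estimate (\cref{lem:correlation_estimate}) and the bound \eqref{eq:assumption_initial_measure_volume} on $v^N_0$ — exactly as in the proof of \cref{lem:HDL_tighness_key_estimate} — gives a bound of the form $C(T,z)$ (finite, $N$-uniform, since for $\kappa\ge 1$ the factor $\alpha_N^2 N^2 \le \alpha^2$ and $\alpha_N^2 N \le \alpha^2/N \to 0$). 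Hence these contributions to the oscillation are bounded by $\delta\cdot C(T,z)$ with $C(T,z)$ independent of $N$, and Markov's inequality makes their probability vanish as $\delta\to 0$ uniformly in $N$.

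For the martingale term I would use the Dynkin decomposition: $M^{v,N}_t(h_z)^2-\langle M^{v,N}(h_z)\rangle_t$ is a martingale, and the quadratic variation is $\langle M^{v,N}(h_z)\rangle_t = \int_0^t\frac1{N^2}\sum_x(\eta_r(x+1)-\eta_r(x))^2(\nabla^N h_z(x/N))^2\,dr$, whose expectation is bounded by $C z^2 T/N$ using $\mathbb E_N[(\eta_r(x+1)-\eta_r(x))^2]\le 2(e^N_r(x+1)+e^N_r(x))$ and the energy profile bound (which follows from \eqref{eq:assumption_initial_meaure_energy} via the maximum principle for the discrete energy equation, together with \cref{thm:fourth-moment_bound} if needed). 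Therefore $\mathbb E_N[\langle M^{v,N}(h_z)\rangle_T]\to 0$, and one concludes by the Aldous-type tightness criterion for martingales: for any stopping time $\tau\le T$ and any $\theta\le\delta$, $\mathbb E_N[(M^{v,N}_{\tau+\theta}(h_z)-M^{v,N}_\tau(h_z))^2]=\mathbb E_N[\langle M^{v,N}(h_z)\rangle_{\tau+\theta}-\langle M^{v,N}(h_z)\rangle_\tau]\le \mathbb E_N[\langle M^{v,N}(h_z)\rangle_T]\to 0$ uniformly, which by Aldous's criterion yields $\lim_{\delta\to 0}\limsup_N\mathbb P_N(\sup_{|s-t|<\delta}|M^{v,N}_t(h_z)-M^{v,N}_s(h_z)|>\varepsilon)=0$. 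The main obstacle is really just bookkeeping: ensuring that all the $N$-uniform bounds (for the energy profile, for the weak-asymmetry factor $\alpha_N^2 N^2$, and for the correlation function) are in place so that every estimate is independent of $N$ for $\kappa\ge 1$; once \cref{lem:correlation_estimate}, \cref{thm:fourth-moment_bound} and Assumption \ref{assump:initial_measure} are invoked, no genuinely new difficulty arises and the argument is a standard consequence of the Dynkin martingale decomposition plus the Aldous criterion applied mode by mode.
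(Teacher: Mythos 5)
Your proposal follows essentially the same route as the paper: reduce to a single Fourier mode $h_z$, split via the Dynkin decomposition \eqref{eq:dynkin_volume}, control the drift terms through the correlation estimate, and control the martingale through its quadratic variation together with optional sampling and Aldous' criterion. Two technical points deserve care, though neither is fatal. First, for the drift terms you bound the oscillation by $\delta$ times a \emph{time-supremum} of the integrand and then propose to take second moments of that supremum; the fixed-time correlation estimate of \cref{lem:correlation_estimate} does not control $\mathbb E_N[\sup_r(\cdot)^2]$. The correct (and standard) order of operations, which is the one actually used in \cref{lem:HDL_tighness_key_estimate} that you cite, is to apply Cauchy--Schwarz in time first, $|\int_s^t F_r\,dr|^2\le \delta\int_0^T F_r^2\,dr$, and only then take expectations and use the correlation bound; with that modification the drift estimate goes through and yields the factor $\delta$ needed for the limit $\delta\to 0$. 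Second, Aldous' criterion controls the \emph{modified} modulus of continuity $\omega'_\delta$, not the uniform one; the paper passes to $\omega_\delta$ via $\omega_\delta\le 2\omega'_\delta+\sup_t|M^{v,N}_t(h_z)-M^{v,N}_{t-}(h_z)|$ and bounds the maximal jump using the conservation of the total volume. You omit this step, but in this particular case it is harmless: since $\mathbb E_N[\langle M^{v,N}(h_z)\rangle_T]=O(1/N)$, Doob's inequality already gives $\sup_{0\le t\le T}|M^{v,N}_t(h_z)-M^{v,N}_0(h_z)|\to 0$ in probability, which dominates any oscillation over sub-intervals and makes the jump control (and indeed the Aldous machinery) unnecessary for the martingale part.
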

\begin{proof}
The lemma follows from showing that
\begin{equation*}
\lim_{\delta\rightarrow{0}}
\limsup_{N\rightarrow{+\infty}}
\mathbb{P}_N \bigg(\sup_{\substack{|s-t|<\delta,\\0\leq{s,t}\leq{T}}}\, 
(\langle \pi^{v,N}_{t}-\pi^{v,N}_{s},h_{z}\rangle)^{2}>\varepsilon\bigg)=0
\end{equation*}
for every $z\in{\mathbb{Z}}$ and $\varepsilon>0$.
Recalling \eqref{eq:dynkin_volume} it follows from the next
claim.
For any function $f\in{C^\infty(\mathbb{T})}$ and  for every $\varepsilon>0$ it holds 
\begin{equation*}
\lim_{\delta\rightarrow{0}}
\limsup_{N\rightarrow{+\infty}}
\mathbb{P}_N \bigg( \sup_{\substack{|s-t|<\delta,\\0\leq{s,t}\leq{T}}} \,
|M_{t}^{v,N}(f)-M_{s}^{v,N}(f)|>\varepsilon \bigg) =0.
\end{equation*}
To prove the claim we denote by $\omega'_{\delta}(M^{v,N}(f))$ the modified modulus of continuity defined as
\begin{equation*}
\omega'_{\delta}(M^{v,N}(f))=\inf_{\substack{\{t_{i}\}}}\quad\max_{\substack{0\leq{i}\leq{r}}}\quad\sup_{\substack{t_{i}\leq{s}<{t}\leq{t_{i+1}}}}|M^{v,N}_t(f)
-M^{v,N}_s(f)|
\end{equation*}
where the infimum is taken over all partitions of $[0,T]$ such that $0=t_{0}<t_{1}<...<t_{r}=T$ with $t_{i+1}-t_{i}>\delta$ for each $i=0,\ldots,r$. 

Note that 
\begin{equation*}\begin{split}
\sup_{\substack{t}}|M^{v,N}_t(f)- M_{t_{-}}^{v,N}(f)|&=\sup_{\substack{t}}|\langle \pi_{t}^{v,N},f\rangle -\langle \pi _{t_{-}}^{v,N},f\rangle |\\&\leq \frac{\|\nabla{f}\|_{\infty}}{N^{2}}\sup_t\Big|\sum_{x\in\mathbb T_N}\eta_t(x)\Big|=\frac{\|\nabla{f}\|_{\infty}}{N^{2}} \Big|\sum_{x\in\mathbb T_N}\eta_0(x)\Big|.\end{split}
\end{equation*}
In the last line we used the conservation law. Now it is enough to apply Chebyshev's inequality and use the assumption \eqref{eq:assumption_initial_measure_correlation} to show that this term does not contribute to the limit. Finally note that 
\begin{equation*}
\omega_{\delta}(M^{v,N}(f))
\le 2\omega'_{\delta}(M^{v,N}(f))+\sup_{\substack{t}}|M^{v,N}_t(f)-M_{t_{-}}^{v,N}(f)|  ,
\end{equation*}
and thus the proof ends if we show that
\begin{equation*}
\lim_{\delta\rightarrow{0}}
\limsup_{N\rightarrow{+\infty}}
\mathbb{P}_N \Big(\omega'_{\delta}(M^{v,N}(f))>\varepsilon\Big)=0 
\end{equation*}
for every $\varepsilon>0$.
In order to show the last display, by the Aldous' criterion (see \cite[Proposition 4.1.6]{kipnis1999scaling}), it is enough to show that:
\begin{equation*}
\lim_{\delta\rightarrow{0}}
\limsup_{N\rightarrow{+\infty}}
\sup_{\substack{\tau\in{\mathfrak{T}_{\tau}}\\0\leq{\theta}\leq{\delta}}}
\mathbb{P}_N \Big( |M_{\tau+\theta}^{v,N}(f)- M_{\tau}^{v,N}(f)|>\varepsilon\Big)=0 
\end{equation*}
for every $\varepsilon>0$. 
Here $\mathfrak {T}_{\tau}$ denotes the family of all stopping times, with respect to the canonical filtration, bounded by $T$. 
From Chebyshev's inequality and the Optional Sampling Theorem, we can bound
\begin{equation*}
\begin{aligned}
\mathbb{P}_N \Big( |M_{\tau+\theta}^{v,N}(f)-M_{\tau}^{v,N}(f)|>\varepsilon\Big)
& \le \frac{1}{\varepsilon^2}\mathbb{E}_{N}\Big[(M_{\tau+\theta}^{v,N}(f))^{2}-(M_{\tau}^{v,N}(f))^{2}\Big] \\
&= \frac{1}{N^2\varepsilon^2} 
\int_{\tau}^{\tau +\theta}
\sum_{x\in\mathbb T_N} 
\mathbb E_N\big[(\eta_s(x)-\eta_s(x+1))^2\big]
(\nabla^N f(\tfrac{x}{N}))^2 ds \\
&\le \frac{C\theta}{N\varepsilon^2} 
\|e^N_0\|_{\ell^\infty(\mathbb T_N)} 
\| \nabla f\|^2_{L^\infty(\mathbb T)} 
\end{aligned}
\end{equation*}
with some $C>0$. 
This ends the proof since the utmost right-hand side vanishes as $N\rightarrow{+\infty}$. 
\end{proof}

\subsubsection{Energy empirical measures}
Now let us shift to showing the tightness of the sequence of empirical measures associated to the energy in the weak asymmetric regime $\kappa>1$. 

\begin{lemma}
\label{lem:tightness_energy}
Fix any $\kappa\ge 1$ and let $m>5/2$.
Then, the sequence of probability measures $\{Q^{e,N}_m\}_{N\in\mathbb N}$ is tight in $D([0,T],H^{-m}(\mathbb T))$.  
\end{lemma}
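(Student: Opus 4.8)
\textbf{Proof plan for Lemma \ref{lem:tightness_energy}.}
The plan is to follow the same scheme used for the volume in \cref{lem:tightness_volume}, working with the Dynkin martingale decomposition \eqref{eq:dynkin_energy}--\eqref{eq:QV energy} applied to the test functions $G=h_z$, and verifying the two Aldous-type conditions (a) and (b) listed in the tightness criterion on $D([0,T],H^{-m}(\mathbb T))$. The reduction to individual modes is identical: it suffices to show $\limsup_N\mathbb E_N[\sup_{0\le t\le T}\langle\pi^{e,N}_t,h_z\rangle^2]\le C(z)$ with $C(z)$ growing at most polynomially (so that $\sum_z\gamma_z^{-m}C(z)<\infty$ for $m>5/2$), together with the equicontinuity estimate via Aldous' criterion on stopping times.

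First I would bound the four pieces of \eqref{eq:dynkin_energy} with $G=h_z$. The initial term $\langle\pi^{e,N}_0,h_z\rangle$ is controlled by \eqref{eq:assumption_initial_meaure_energy} (which bounds $e^N_0$ uniformly) together with an elementary variance computation that uses the correlation bound; here one needs a bound on $\mathbb E_N[(\eta_0(x)^2-e^N_0(x))(\eta_0(y)^2-e^N_0(y))]$, i.e.\ on the \emph{fourth} moment of $\eta_0$, which is where the $\mathcal H^{-1}$-type Assumption \eqref{eq:assum_H-1norm} and \cref{thm:fourth-moment_bound} enter. The symmetric integral term $\int_0^t N^{-1}\sum_x\eta_s(x)^2\Delta^Nh_z(x/N)\,ds$ is handled by Cauchy--Schwarz in time and then bounding $\mathbb E_N[\int_0^T N^{-1}\sum_x\eta_s(x)^4\,ds]$ by \cref{thm:fourth-moment_bound} (which is $O(1)$ when $\kappa\ge1$ since $\alpha_N N = \alpha N^{1-\kappa}\le\alpha$); this produces a factor $z^4$ from $\Delta^Nh_z$. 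The antisymmetric term $2\alpha_N\int_0^t\sum_x\eta_s(x+1)\eta_s(x)\nabla^Nh_z(x/N)\,ds$ is bounded similarly, using $\alpha_N\sum_x\le\alpha_N N=O(1)$ and $|\eta_s(x+1)\eta_s(x)|\le\tfrac12(\eta_s(x+1)^2+\eta_s(x)^2)$ to reduce again to the time-integrated fourth moment, giving a factor $z^2$. Finally the martingale term is controlled by Doob's inequality applied to \eqref{eq:QV energy}: $\mathbb E_N[\langle M^{e,N}(h_z)\rangle_T]\le C z^2 N^{-2}\sum_x\int_0^T\mathbb E_N[(\eta_s(x+1)^2-\eta_s(x)^2)^2]\,ds$, and expanding the square and using $(a^2-b^2)^2\le 2a^4+2b^4$ reduces this once more to \cref{thm:fourth-moment_bound}, leaving a vanishing prefactor $N^{-1}$. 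Collecting these gives $\limsup_N\mathbb E_N[\sup_t\langle\pi^{e,N}_t,h_z\rangle^2]\le C z^4$, which yields conditions (1) and (2) for the energy exactly as in the corollary following \cref{lem:HDL_tighness_key_estimate}.

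For the equicontinuity part I would repeat the Aldous argument: decompose $\omega_\delta(M^{e,N}(f))$ into the modified modulus $\omega'_\delta$ plus the maximal jump size, bound the jump by $\sup_t|\langle\pi^{e,N}_t,f\rangle-\langle\pi^{e,N}_{t^-},f\rangle|\le 2\|\nabla f\|_\infty N^{-2}\sup_t|\sum_x\eta_t(x)^2| = 2\|\nabla f\|_\infty N^{-2}|\sum_x\eta_0(x)^2|$ by conservation of energy (then Chebyshev plus \eqref{eq:assum_H-1norm} or \eqref{eq:assumption_initial_meaure_energy} kills it), and for $\omega'_\delta$ invoke Aldous' criterion, bounding $\mathbb E_N[(M^{e,N}_{\tau+\theta}(f))^2-(M^{e,N}_\tau(f))^2]=\mathbb E_N[\int_\tau^{\tau+\theta}N^{-2}\sum_x(\eta_s(x+1)^2-\eta_s(x)^2)^2(\nabla^Nf(x/N))^2\,ds]$ by $C\theta\|\nabla f\|_\infty^2 N^{-1}\cdot\mathbb E_N[\frac1T\int_0^T\frac1N\sum_x\eta_s(x)^4\,ds]$, which is $O(\theta/N)\to0$. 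The drift part of $M^{e,N}_t(f)-M^{e,N}_s(f)$ over a short interval $[s,t]$ with $|t-s|<\delta$ is estimated by the same fourth-moment bound, giving a contribution $O(\delta)$ uniformly in $N$; letting $\delta\to0$ finishes condition (b).

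\textbf{Main obstacle.} Unlike the volume case, the energy empirical measure involves $\eta_s(x)^2$, so \emph{every} estimate requires control of (time-integrated) fourth moments of $\eta$ rather than second moments, and these are not available a priori for general initial measures — the only handle is \cref{thm:fourth-moment_bound}, whose bound $C(1+\alpha_N N)$ is uniformly $O(1)$ precisely because we are in the regime $\kappa\ge1$. The delicate point is therefore to make sure that in each of the four terms the combinatorial prefactors ($\alpha_N N$, $N^{-1}$, etc.) conspire to keep constants $N$-independent while the $z$-dependence stays polynomial; I expect bounding the antisymmetric term (where $\alpha_N$ multiplies a sum over $x$ with no extra $N^{-1}$) to be the spot that forces the hypothesis $\kappa\ge1$ and to be the most error-prone to get right.
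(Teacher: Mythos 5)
Your proposal is correct and follows essentially the same route as the paper: the same Dynkin decomposition \eqref{eq:dynkin_energy}--\eqref{eq:QV energy} tested against $h_z$, with every term reduced via Cauchy--Schwarz to the time-integrated fourth-moment bound of \cref{thm:fourth-moment_bound} (whose $1+\alpha_N N$ factor is $O(1)$ for $\kappa\ge 1$), followed by the identical Aldous/modified-modulus argument using conservation of energy for the jumps. The only cosmetic difference is in the antisymmetric term, where you use $|\eta(x)\eta(x+1)|\le\tfrac12(\eta(x)^2+\eta(x+1)^2)$ before squaring while the paper applies Cauchy--Schwarz to get $\sum_x\eta(x)^2\eta(x+1)^2$; both yield the same prefactor $\alpha_N^2N^2(1+\alpha_N N)$.
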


As above, in order to show that the sequence $\{Q^{e,N}_m\}_{N}$ is tight, we have to check that 
\begin{equation*}
\begin{split}
&\lim_{A\rightarrow{+\infty}}
\limsup_{N\rightarrow{+\infty}}
Q^{e,N}_m\Big(\sup_{0\leq{t}\leq{T}}\|\pi^e_{t}\|_{-m}^{2} > A \Big) =0. \\&
\lim_{\delta\rightarrow{0}}
\limsup_{N\rightarrow{+\infty}}
Q^{e,N}_m \big(\omega_{\delta}(\pi^e)\geq\varepsilon \big)=0
\end{split}\end{equation*}
for any $\varepsilon>0$. 
\begin{lemma} 
There exists some $C=C(T)>0$ such that for every $z\in\mathbb Z$, 
\begin{equation*}
{\limsup
_{N\rightarrow{+\infty}}} \,
\mathbb{E}_{N}\Big[\sup_{0\leq{t}\leq{T}}|\langle\pi^{e,N}_{t},h_{z}\rangle|^{2}\Big] 
\le C(1+z^2\alpha^2(1+\alpha)\mathbf{1}_{\kappa=1}
+ z^4).
\end{equation*}
\end{lemma}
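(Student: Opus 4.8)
The plan is to mimic the proof of \cref{lem:HDL_tighness_key_estimate}, estimating separately each term in the Dynkin decomposition \eqref{eq:dynkin_energy} with $G=h_z$, but now the key new difficulty is that the bilinear expressions involve \emph{fourth} moments of $\eta$ rather than second moments, and we do not have a pointwise-in-time bound on $\mathbb E_N[\eta_s(x)^4]$. First I would handle the initial term $\langle\pi^{e,N}_0,h_z\rangle^2$, which by \eqref{eq:assumption_initial_meaure_energy} is bounded by a constant (uniformly in $N$, contributing the $1$ in the bound). For the symmetric integral term, Cauchy--Schwarz in time gives
\begin{equation*}
\mathbb E_N\Big[\Big(\sup_{0\le t\le T}\int_0^t\tfrac1N\sum_x\eta_s(x)^2\Delta^N h_z(x/N)\,ds\Big)^2\Big]
\le T\int_0^T\tfrac1{N^2}\sum_{x,y}\mathbb E_N[\eta_s(x)^2\eta_s(y)^2]\,\Delta^N h_z(\tfrac xN)\Delta^N h_z(\tfrac yN)\,ds,
\end{equation*}
and then I would bound $\mathbb E_N[\eta_s(x)^2\eta_s(y)^2]\le\tfrac12\mathbb E_N[\eta_s(x)^4]+\tfrac12\mathbb E_N[\eta_s(y)^4]$ so that one sum collapses; the resulting $\tfrac1N\sum_x\int_0^T\mathbb E_N[\eta_s(x)^4]\,ds$ is controlled by the fourth-moment bound \cref{thm:fourth-moment_bound} (which under $\kappa\ge 1$ gives $C(1+\alpha_N N)\le C$), while $\|\Delta^N h_z\|_\infty^2\lesssim z^4$ and the remaining spatial sum of $|\Delta^N h_z(y/N)|$ costs another factor; a careful bookkeeping of these factors of $N$ against $1/N^2$ yields the $z^4$ term.

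For the antisymmetric term the relevant expectation is $\mathbb E_N[\eta_s(x+1)\eta_s(x)\,\eta_s(y+1)\eta_s(y)]$ with a prefactor $\alpha_N^2$; bounding this four-point correlation crudely by fourth moments via repeated Young's inequality, $|\eta_s(x+1)\eta_s(x)\eta_s(y+1)\eta_s(y)|\le\tfrac14(\eta_s(x+1)^4+\eta_s(x)^4+\eta_s(y+1)^4+\eta_s(y)^4)$, again reduces one of the two spatial sums to the averaged fourth moment handled by \cref{thm:fourth-moment_bound}. Here the fourth-moment bound must be invoked with its full strength $C(1+\alpha_N N)$: when $\kappa=1$ we have $\alpha_N N=\alpha$, so the bound is $C(1+\alpha)$, and tracking the surviving sum over $y$ (of order $N$), the $\|\nabla^N h_z\|_\infty^2\lesssim z^2$, the $\alpha_N^2=\alpha^2 N^{-2}$, and the $1/N^2$ from the normalization, one arrives at a contribution of order $z^2\alpha^2(1+\alpha)$, which is present only in the critical case $\kappa=1$ (for $\kappa>1$ the power counting makes it vanish), explaining the indicator $\mathbf 1_{\kappa=1}$. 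The martingale term from \eqref{eq:QV energy} is handled by Doob's inequality: its quadratic variation is $\int_0^t\tfrac1{N^2}\sum_x(\eta^2_s(x+1)-\eta^2_s(x))^2(\nabla^N h_z)^2\,ds$, whose expectation, after expanding the square and using $(a^2-b^2)^2\le 2a^4+2b^4$, is again controlled by $z^2$ times the averaged fourth moment, hence of lower order.

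The main obstacle is precisely the absence of an a priori pointwise fourth-moment bound: every bilinear/quadrilinear expectation appearing above must be reduced, by elementary convexity inequalities, to the \emph{space-time averaged} quantity $\mathbb E_N[\int_0^T\tfrac1N\sum_x\eta_s(x)^4\,ds]$ that \cref{thm:fourth-moment_bound} controls, and this forces us to sacrifice one of the two spatial summations in each term — which is exactly why the $z$-powers in the final bound ($z^4$ for the symmetric part, $z^2$ for the asymmetric part) are higher than one might naively expect and why the sharp constant $C(1+\alpha_N N)$ from \cref{thm:fourth-moment_bound}, rather than a cruder second-moment estimate, is essential in the critical regime. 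The secondary point to be careful about is the power counting that distinguishes $\kappa=1$ from $\kappa>1$: since $\alpha_N^2 N^2 = \alpha^2 N^{2-2\kappa}$, the asymmetric contribution is $O(z^2\alpha^2(1+\alpha))$ when $\kappa=1$ and $o(1)$ when $\kappa>1$, so the stated bound with the indicator $\mathbf 1_{\kappa=1}$ holds uniformly for all $\kappa\ge 1$.
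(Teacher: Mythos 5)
Your proposal follows essentially the same route as the paper: decompose via the Dynkin martingale \eqref{eq:dynkin_energy}, bound the initial term by \eqref{eq:assumption_initial_meaure_energy}, reduce the symmetric and antisymmetric integral terms by elementary convexity (the paper uses Cauchy--Schwarz on the spatial sum where you use Young's inequality, which is equivalent here) to the space-time averaged fourth moment of \cref{thm:fourth-moment_bound}, and control the martingale by Doob's inequality and \eqref{eq:QV energy}; the resulting powers $C(1+\alpha_N N)z^4$ and $C\alpha_N^2N^2(1+\alpha_N N)z^2$ match the paper's exactly, including the $\kappa=1$ versus $\kappa>1$ dichotomy. One bookkeeping remark: in your list of factors for the antisymmetric term the ``$1/N^2$ from the normalization'' is spurious (that term in \eqref{eq:dynkin_energy} carries the prefactor $2\alpha_N$ with no $1/N$), but since your stated conclusion $O(z^2\alpha^2(1+\alpha))$ is the correct count, this is a slip in the prose rather than a gap in the argument.
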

\begin{proof}
To prove the lemma, we estimate separately each term in the martingale decomposition \eqref{eq:dynkin_energy} with $G=h_z$. 
A simple computation shows that
\begin{equation*}
\mathbb{E}_{N}
\big[\big(
\langle \pi^{e,N}_{0},h_{z}\rangle \big)^{2} \big]
\leq \mathbb{E}_{N}
\bigg[\Big(\frac{1}{N}\sum_{x\in\mathbb T_N} \eta_0(x)^2 h_{z}(x/N) \Big)^{2} \bigg]
\leq 2\|e^N_0\|_{\ell^\infty(\mathbb T_N)} ,
\end{equation*}
which is finite by \eqref{eq:assumption_initial_meaure_energy}.  
Next, recall from \eqref{eq:QV energy} {with $G=h_z$} that 
\begin{equation*}
\langle M^{e,N}(h_z)\rangle_t
=\int_0^t \frac{1}{N^2} \sum_{x\in\mathbb{T}_N}\big(\eta^2_s(x+1)-\eta^2_s(x)\big)^2
(\nabla^N h_z(x/N))^2 ds.
\end{equation*} 
Then, we can see that the contribution of the martingale vanishes, by combining Doob's inequality with the assumption \eqref{eq:assum_H-1norm} and Theorem \ref{thm:fourth-moment_bound}.
Finally, we analyze the integral terms.  
First, we start with the contribution from the symmetric part. 
To that end, note that 
\begin{equation*}\begin{split}
&\mathbb E_N\bigg[ \Big(\sup_{0\leq{t}\leq{T}}\int_0^t\frac{1}{N}\sum_{x\in\mathbb{T}}
\eta_s(x)^2\Delta^N h_z(x/N) ds \Big)^2\bigg] \\
&\quad\le T \mathbb E_N\bigg[\int_0^T\Big(
\frac{1}{N}\sum_{x\in\mathbb{T}_N}\eta_s(x)^2\Delta^N h_z(x/N)\Big)^2ds\bigg]
\\
&\quad\leq \frac{T}{N^2}  
\int_0^T \sum_{x\in\mathbb{T}_N}
\mathbb E_N[\eta_s(x)^4] ds  
\sum_{x\in\mathbb{T}_N} (\Delta^N h_z(x/N))^2
\\&\quad
\le C (1+\alpha_N N )z^4
\end{split}
\end{equation*}
for some $C=C(T)>0$, where we used the Cauchy-Schwarz inequality and \cref{thm:fourth-moment_bound}.  
For the anti-symmetric part we reason as follows:
\begin{equation*}\begin{split}
&\mathbb E_N\bigg[ \bigg(\sup_{0\leq{t}\leq{T}}\int_0^t\alpha_N 
\sum_{x\in\mathbb{T}}\eta_s(x)\eta_s(x+1){\nabla}^N h_z(x/N) ds\bigg)^2\bigg] \\
&\quad\le T \mathbb E_N\bigg[\int_0^T\alpha_N^2\Big(\sum_{x\in\mathbb{T}_N}\eta_s(x)\eta_s(x+1){\nabla}^N h_z(x/N) \big)^2ds\bigg]
\\
&\quad\leq T \alpha_N^2\int_0^T\Big(\sum_{x\in\mathbb{T}_N}\eta_s(x)^2\eta_s(x+1)^2\Big)\Big(\sum_{x\in\mathbb T_N}{\nabla}^N h_z(x/N)^2 \Big) ds
\\
&\quad\le C \alpha_N^2N^2(1+\alpha_N N) z^2
\end{split}
\end{equation*}
for some $C=C(T)>0$. Above we  used again the Cauchy-Schwarz inequality and Theorem \ref{thm:fourth-moment_bound}. 
Hence, we complete the proof of the desired estimates. 
\end{proof}
From the last inequality the condition (1) of the corollary holds. The proof is the same as for the volume so that we omit it and we only present the proof of condition (2).
\begin{corollary}
Assume $m>5/2$ and $\kappa\ge 1$. Then:
\begin{equation*}
\limsup_{N\rightarrow{+\infty}}
\mathbb{E}_{N}\Big[\sup_{0\leq{t}\leq{T}}\|\pi_{t}^{e,N}\|_{-m}^{2}\Big] < \infty 
\end{equation*}
and 
\begin{equation*}
\lim_{n\rightarrow{+\infty}}
\limsup_{N\rightarrow{+\infty}}
\mathbb{E}_{N}\Big[\sup_{0\leq{t}\leq{T}}\sum_{|z|\geq{n}}(\langle \pi_{t}^{e,N},h_{z}\rangle)^{2}\gamma_{z}^{-m}\Big]=0.
\end{equation*}
\end{corollary}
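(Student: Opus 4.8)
Both assertions will follow from the bound of the preceding lemma together with the summability of $(\gamma_z^{-m})_{z\in\mathbb Z}$ that holds when $m>5/2$; item (1), as already noted, is then entirely parallel to the volume case. The plan is as follows. For item (1), I would start from the Parseval identity $\|\pi\|_{-m}^2=\sum_{z\in\mathbb Z}\gamma_z^{-m}\langle\pi,h_z\rangle^2$ valid for any $\pi\in H^{-m}(\mathbb T)$. Since every summand $\gamma_z^{-m}\langle\pi^{e,N}_t,h_z\rangle^2$ is non-negative, one may pass the supremum over $t\in[0,T]$ inside the sum, and then exchange the expectation with the sum by Tonelli's theorem, to get
\begin{equation*}
\mathbb E_N\Big[\sup_{0\le t\le T}\|\pi^{e,N}_t\|_{-m}^2\Big]
\le \sum_{z\in\mathbb Z}\gamma_z^{-m}\,
\mathbb E_N\Big[\sup_{0\le t\le T}\langle\pi^{e,N}_t,h_z\rangle^2\Big].
\end{equation*}
Plugging in the estimate of the previous lemma, the right-hand side is bounded, up to a multiplicative constant, by $\sum_{z\in\mathbb Z}\gamma_z^{-m}\big(1+z^2\alpha^2(1+\alpha)\mathbf 1_{\kappa=1}+z^4\big)$. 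Because $\gamma_z=1+4\pi^2 z^2$, the general term of this series is of order $|z|^{4-2m}$, so the series converges precisely when $m>5/2$; this yields the first claim (and in particular it is finite for every $\kappa\ge 1$).

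For item (2), I would run exactly the same computation but retaining only the tail $|z|\ge n$ in the Parseval sum. The same interchange of $\sup_t$, $\mathbb E_N$ and $\sum_{|z|\ge n}$ gives
\begin{equation*}
\limsup_{N\to\infty}\mathbb E_N\Big[\sup_{0\le t\le T}\sum_{|z|\ge n}\langle\pi^{e,N}_t,h_z\rangle^2\gamma_z^{-m}\Big]
\le C\sum_{|z|\ge n}\gamma_z^{-m}\big(1+z^2+z^4\big),
\end{equation*}
and the right-hand side is the tail of the convergent series from item (1); hence it tends to $0$ as $n\to\infty$. Taking the limit in $n$ proves the corollary.

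There is essentially no serious obstacle here: the only point that deserves a line of care is the legitimacy of exchanging $\sup_{0\le t\le T}$ with the infinite sum over $z$, which is valid because each term $\gamma_z^{-m}\langle\pi^{e,N}_t,h_z\rangle^2$ is non-negative, together with the interchange of $\mathbb E_N$ and the sum, which is Tonelli's theorem. Combined with the equicontinuity estimate — obtained from the Dynkin decomposition \eqref{eq:dynkin_energy} exactly as in the volume case, using \cref{thm:fourth-moment_bound} to control the quadratic variation \eqref{eq:QV energy} and hence the martingale increments — this corollary establishes the tightness of $\{Q^{e,N}_m\}_N$ asserted in \cref{lem:tightness_energy}.
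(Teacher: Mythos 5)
Your proposal is correct and follows essentially the same route as the paper: the paper simply notes that the energy case is identical to the volume case, where the argument is exactly yours — expand $\|\pi^{e,N}_t\|_{-m}^2$ in the basis $\{h_z\}$, pass the supremum and expectation through the sum using non-negativity, invoke the preceding lemma's bound $C(1+z^2\alpha^2(1+\alpha)\mathbf 1_{\kappa=1}+z^4)$, and use summability of $\gamma_z^{-m}z^4\sim|z|^{4-2m}$ for $m>5/2$, with item (2) being the tail of that convergent series.
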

Note that condition (1) above holds as a consequence of item (a) of the previous corollary. It remains now to prove (2) but this follows from the next lemma:
\begin{lemma}
For every $n\in{\mathbb{N}}$ and every $\varepsilon>{0}$,
\begin{equation*}
\lim_{\delta\rightarrow{0}}\limsup_{N\rightarrow{+\infty}}
\mathbb{P}_N \bigg(\sup_{\substack{|s-t|<\delta, \\0\leq{s,t}\leq{T}}} \,
\sum_{|z|\leq{n}}(\langle \pi^{e,N}_{t}-\pi^{e,N}_{s},h_{z}\rangle)^{2}\gamma_{z}^{-m}>\varepsilon\bigg)=0 .
\end{equation*}
\end{lemma}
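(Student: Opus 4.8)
The plan is to reproduce, with the obvious changes, the argument carried out above for the volume empirical measure; the only genuinely new ingredient is the energy quadratic variation \eqref{eq:QV energy}, which has to be estimated through the fourth-moment bound of \cref{thm:fourth-moment_bound}. Since the sum over $|z|\le n$ is finite and the weights $\gamma_z^{-m}$ are bounded, a union bound reduces the statement to showing that for each fixed $z\in\mathbb Z$ and each $\varepsilon>0$,
\begin{equation*}
\lim_{\delta\to 0}\limsup_{N\to\infty}
\mathbb P_N\Big(\sup_{\substack{|s-t|<\delta\\ 0\le s,t\le T}}\big(\langle\pi^{e,N}_t-\pi^{e,N}_s,h_z\rangle\big)^2>\varepsilon\Big)=0.
\end{equation*}
By the Dynkin decomposition \eqref{eq:dynkin_energy}, the increment $\langle\pi^{e,N}_t-\pi^{e,N}_s,h_z\rangle$ is the sum of the martingale increment $M^{e,N}_t(h_z)-M^{e,N}_s(h_z)$, the symmetric drift $\frac1N\int_s^t\sum_x\eta_u(x)^2\Delta^N h_z(x/N)\,du$ and the antisymmetric drift $2\alpha_N\int_s^t\sum_x\eta_u(x+1)\eta_u(x)\nabla^N h_z(x/N)\,du$, and it suffices to bound the modulus of continuity of each of the three.

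For the martingale I would use, exactly as for the volume, the decomposition $\omega_\delta(M^{e,N}(h_z))\le 2\omega'_\delta(M^{e,N}(h_z))+\sup_t|M^{e,N}_t(h_z)-M^{e,N}_{t-}(h_z)|$. The jump term is controlled by the conservation of the total energy: a single exchange changes $\langle\pi^{e,N}_t,h_z\rangle$ by at most $\|\nabla h_z\|_\infty N^{-2}\sum_x\eta_t(x)^2=\|\nabla h_z\|_\infty N^{-2}\sum_x\eta_0(x)^2$, whose expectation is $O(N^{-1})$ by \eqref{eq:assumption_initial_meaure_energy}, so Markov's inequality makes it negligible. For $\omega'_\delta$ I would invoke Aldous' criterion: optional sampling together with \eqref{eq:QV energy} gives, for a stopping time $\tau\le T$ and $0\le\theta\le\delta$,
\begin{equation*}
\mathbb P_N\big(|M^{e,N}_{\tau+\theta}(h_z)-M^{e,N}_\tau(h_z)|>\varepsilon\big)
\le\frac1{\varepsilon^2}\,\mathbb E_N\Big[\int_\tau^{\tau+\theta}\frac1{N^2}\sum_x\big(\eta^2_u(x+1)-\eta^2_u(x)\big)^2\big(\nabla^N h_z(x/N)\big)^2\,du\Big],
\end{equation*}
and bounding $(\eta^2(x+1)-\eta^2(x))^2\le 2(\eta(x+1)^4+\eta(x)^4)$ and $\|\nabla^N h_z\|_\infty^2\le Cz^2$, the right-hand side is at most $Cz^2\varepsilon^{-2}N^{-1}(1+\alpha_N N)$ by \cref{thm:fourth-moment_bound}; since $\kappa\ge1$ forces $\alpha_N N=\alpha N^{1-\kappa}\le\alpha$, this vanishes as $N\to\infty$, uniformly in $\tau$ and $\theta$, which verifies the Aldous condition.

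For the two drift terms the key observation is that each has the form $t\mapsto\int_0^t g_N(u)\,du$ with $\mathbb E_N\big[\int_0^T g_N(u)^2\,du\big]\le C(z,\alpha,T)$: indeed, by Cauchy--Schwarz the square of the symmetric integrand is $\le Cz^4\cdot\frac1N\sum_x\eta_u(x)^4$, while the square of the antisymmetric integrand is $\le C\alpha_N^2N^2z^2\cdot\frac1N\sum_x\eta_u(x)^4$, and after integration in time \cref{thm:fourth-moment_bound} together with $\alpha_N N\le\alpha$ gives the claimed bound. Then $\sup_{|s-t|<\delta}\big|\int_s^t g_N\big|\le\delta^{1/2}\big(\int_0^T g_N^2\big)^{1/2}$, so Markov's and Jensen's inequalities yield a bound of order $C(z,\alpha,T)\,\delta^{1/2}\varepsilon^{-1}$, which tends to $0$ as $\delta\to0$ uniformly in $N$. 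Assembling the three bounds proves the one-mode estimate and hence the lemma. The main obstacle throughout is the absence of any a priori second-moment control on the $\eta_u(x)$'s: every term, and in particular the energy quadratic variation \eqref{eq:QV energy}, which is genuinely fourth-order, has to be routed through \cref{thm:fourth-moment_bound}, which is exactly why the hypothesis $\kappa\ge1$ is needed to keep the factor $\alpha_N N$ bounded.
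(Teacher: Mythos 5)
Your proposal is correct and follows essentially the same route as the paper: reduce to a single Fourier mode, split via the Dynkin decomposition \eqref{eq:dynkin_energy}, control the martingale jumps by energy conservation and the modified modulus of continuity via Aldous' criterion, with the quadratic variation \eqref{eq:QV energy} routed through \cref{thm:fourth-moment_bound} and $\alpha_N N\le\alpha$ for $\kappa\ge1$. Your explicit Cauchy--Schwarz-in-time treatment of the two drift integrals is a welcome addition that the paper leaves implicit, but it does not change the argument.
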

\begin{proof}
The lemma follows from showing that
\begin{equation*}
\lim_{\delta\rightarrow{0}}\limsup_{N\rightarrow{+\infty}}
\mathbb{P}_N \bigg(\sup_{\substack{|s-t|<\delta,\\0\leq{s,t}\leq{T}}}\, 
(\langle \pi^{e,N}_{t}-\pi^{e,N}_{s},h_{z}\rangle)^{2}>\varepsilon\bigg)=0.
\end{equation*}
for every $z\in{\mathbb{Z}}$ and $\varepsilon>0$.
Recalling \eqref{eq:dynkin_energy} it follows from the next
claim.
For any function $f\in{C^\infty(\mathbb{T})}$ and  for every $\varepsilon>0$ it holds 
\begin{equation*}
\lim_{\delta\rightarrow{0}}
\limsup_{N\rightarrow{+\infty}}
\mathbb{P}_N \bigg( \sup_{\substack{|s-t|<\delta,\\0\leq{s,t}\leq{T}}} \,
|M^{e,N}_t(f)-\mathcal M_{s}^{e,N}(f)|>\varepsilon \bigg) =0.
\end{equation*}
Note that 
\begin{equation*}\begin{split}
\sup_{\substack{t}}|M^{e,N}_t(f)-\mathcal M_{t_{-}}^{e,N}(f)|&=\sup_{\substack{t}}|\langle \pi_{t}^{e,N},f\rangle -\langle \pi _{t_{-}}^{e,N},f\rangle |\\&\leq \frac{\|\nabla{f}\|_{\infty}}{N^{2}}\sup_t\sum_{x\in\mathbb T_N}\eta_t(x)^2=\frac{\|\nabla{f}\|_{\infty}}{N^{2}}\sum_{x\in\mathbb T_N}\eta_0(x)^2.\end{split}
\end{equation*}
In the last line we used the conservation law. Now it is enough to apply Markov's inequality and use the assumption \eqref{eq:assumption_initial_meaure_energy} to show that this term does not contribute to the limit. 
Finally note that 
\begin{equation*}
\omega_{\delta}(M^{e,N}(f))\leq{2\omega'_{\delta}(M^{e,N}(f))+\sup_{\substack{t}}|M^{e,N}_t(f)- M_{t_{-}}^{e,N}(f)|},
\end{equation*}
and thus, the proof ends if we show that
\begin{equation*}
\lim_{\delta\rightarrow{0}}
\limsup_{N\rightarrow{+\infty}}
\mathbb{P}_N \Big(\omega'_{\delta}(M^{e,N}(f))>\varepsilon\Big)=0 
\end{equation*}
for every $\varepsilon>0$.
Again by  Aldous' criterion (\cite[Proposition 4.1.6]{kipnis1999scaling}) it is enough to show that: 
\begin{equation*}
\lim_{\delta\rightarrow{0}}
\limsup_{N\rightarrow{+\infty}}
\sup_{\substack{\tau\in{\mathfrak
{T}_{\tau}}\\0\leq{\theta}\leq{\delta}}}
\mathbb{P}_N \Big(|M_{\tau+\theta}^{e,N}(f)- M_{\tau}^{e,N}(f)|>\varepsilon\Big)=0 
\end{equation*}
for every $\varepsilon>0$. 
Here $\mathfrak {T}_{\tau}$ denotes the family of all stopping times, with respect to the canonical filtration, bounded
by $T$. From Chebychev's inequality and the Optional Sampling Theorem, we can bound
\begin{equation*}
\begin{split}
\mathbb{P}_N \Big(|M_{\tau+\theta}^{v,N}(f)- M_{\tau}^{e,N}(f)|>\varepsilon\Big)
&\le \frac{1}{\varepsilon^2} \mathbb{E}_{N}\Big[(M_{\tau+\theta}^{e,N}(f))^{2}-(M_{\tau}^{e,N}(f))^{2}\Big]
\\&\le \frac{C}{N^2\varepsilon^2}\mathbb E_N\Big[\int_\tau^{\tau+\tau'} 
\sum_{x\in\mathbb T_N}\eta_s(x)^4ds \Big]
\end{split}
\end{equation*}
where we used \eqref{eq:QV energy} in the second esimate.  
From \cref{thm:fourth-moment_bound}, after extending the interval of the integral to $[0,T]$, we conclude that the utmost right-hand side of the last display vanishes as $N\to+\infty$, which ends the proof.  
\end{proof}

\subsection{Characterization of the limit points} 
\label{subsec:clp}
We characterize now the limit points of the sequences $\{Q^{v,N}_m\}_{N\in\mathbb N}$ and $\{Q^{e,N}_m\}_{N\in\mathbb N}$.  
As a first step, we show that all the limit points of $\{Q^{v,N}_m\}_N$ and $\{Q^{e,N}_m\}_N$ are absolutely continuous with respect to the Lebesgue measure. 
The proof will be based on the key observation that if the empirical measure has a finite second moment, following the strategy of the proof given in \cite[Theorem 4.2]{suzuki1993hydrodynamic}.

\begin{lemma}
\label{lem:absolute_continuity}
Assume $\kappa\ge 1$ and assume the condition in \cref{thm:fourth-moment_bound}.   
Let $Q^v_m$ (resp. $Q^e_m$) be any limiting point of the sequence $\{Q^{v,N}_m\}_{N\in\mathbb N}$ (resp. $\{Q^{e,N}_m\}_{N\in\mathbb N}$) as $N\to\infty$. 
Then, for any $t\in [0,T]$, all limiting points are concentrated on trajectories that are absolute continuous with respect to the Lebesgue measure, i.e., 
\begin{equation*} 
Q^\sigma_m \Big( \pi^\sigma:
\pi^\sigma_t(du) = \sigma(t,u)du \text{ for some } \sigma(t,u) \Big) = 1
\end{equation*}
for each $\sigma\in \{v,e\}$. 
\end{lemma}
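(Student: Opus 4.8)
The plan is to reduce the claim, separately for $\sigma=v$ and $\sigma=e$, to an estimate on a \emph{mollified second moment} of the empirical measure that is uniform in $N$, and then to let the mollification scale tend to zero. Throughout I will use that, by the discrete equations \eqref{eq:discrete_hdl}, the maximum principle and the bounds on the initial profiles (Assumption \ref{assump:initial_measure}), one has $\max_{x\in\mathbb T_N}|v^N_t(x)|\vee\max_{x\in\mathbb T_N}|e^N_t(x)|\le C$ uniformly in $t\in[0,T]$ and $N$ (here $\kappa\ge1$ enters through $\alpha_NN\le\alpha$). Fix also a smooth approximation of the identity $(\iota_\delta)_{\delta\in(0,1)}$ on $\mathbb T$ with $\iota_\delta\ge0$, $\|\iota_\delta\|_{L^1(\mathbb T)}=1$ and $\|\iota_\delta\|_2^2\le C\delta^{-1}$, and write $(\pi^{\sigma,N}_t*\iota_\delta)(u)=\langle\pi^{\sigma,N}_t,\iota_\delta(u-\cdot)\rangle$.

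For the volume I would expand, for every $t\in[0,T]$,
\begin{equation*}
\begin{aligned}
\mathbb E_N\big[\langle\pi^{v,N}_t,G\rangle^2\big]
&=\frac1{N^2}\sum_{x\ne y}\big(\varphi^N_t(x,y)+v^N_t(x)v^N_t(y)\big)G(\tfrac xN)G(\tfrac yN)\\
&\quad+\frac1{N^2}\sum_{x\in\mathbb T_N}e^N_t(x)\,G(\tfrac xN)^2 ,
\end{aligned}
\end{equation*}
and bound the $\varphi^N_t$-term by $\tfrac CN\big(\tfrac1N\sum_x|G(\tfrac xN)|\big)^2$ using the correlation estimate \cref{lem:correlation_estimate}, the $v^N_tv^N_t$-term by $C\big(\tfrac1N\sum_x|G(\tfrac xN)|\big)^2$ using $\max_x|v^N_t(x)|\le C$, and the diagonal term by $\tfrac1N\max_x|e^N_t(x)|\cdot\tfrac1N\sum_xG(\tfrac xN)^2$. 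Taking $G=\iota_\delta(u-\cdot)$ and integrating in $u\in\mathbb T$, the first two contributions stay bounded by a constant \emph{independent of $\delta$} (since $\|\iota_\delta\|_{L^1(\mathbb T)}=1$), while the diagonal one is $O((N\delta)^{-1})$. Passing to the limit along a converging subsequence $Q^{v,N}_m\Rightarrow Q^v_m$ — using that every limit point is concentrated on $C([0,T],H^{-m}(\mathbb T))$ (a consequence of the tightness estimates in \cref{subsec:HDL_tightness}), so that for each fixed $\delta$ the map $\pi\mapsto\int_{\mathbb T}(\pi_t*\iota_\delta)(u)^2\,du$ is lower semicontinuous — gives
\begin{equation*}
\mathbb E_{Q^v_m}\Big[\int_{\mathbb T}(\pi_t*\iota_\delta)(u)^2\,du\Big]\le C
\qquad\text{for all }t\in[0,T]\text{ and all }\delta\in(0,1).
\end{equation*}
Letting $\delta=\delta_k\downarrow0$ and applying Fatou's lemma, $Q^v_m$-a.s.\ $\liminf_k\|\pi_t*\iota_{\delta_k}\|_2<\infty$ for every $t$; extracting a weakly-$L^2$ convergent subsequence and recalling that $\pi_t*\iota_{\delta_k}\to\pi_t$ in $\mathcal D'(\mathbb T)$, one concludes $\pi_t(du)=v(t,u)\,du$ with $v(t,\cdot)\in L^2(\mathbb T)$, for every $t$, $Q^v_m$-a.s.

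For the energy the argument is shorter because $\eta_t(x)^2\ge0$, so the $\pi^{e,N}_t$ are non-negative measures and $\mathbb E_N[\pi^{e,N}_t(B)]=\tfrac1N\sum_{x:\,x/N\in B}e^N_t(x)\le\max_x|e^N_t(x)|\,\big(\mathrm{Leb}(B)+o(1)\big)$ for any open $B\subseteq\mathbb T$. Since $\pi^{e,N}_t\Rightarrow\pi^e_t$ weakly as measures (from $H^{-m}$-convergence together with the uniform bound on the total mass and non-negativity), the portmanteau theorem and Fatou's lemma give $\mathbb E_{Q^e_m}[\pi^e_t(B)]\le C\,\mathrm{Leb}(B)$ for every open $B$, hence for every Borel $B$ by regularity; thus the intensity measure $B\mapsto\mathbb E_{Q^e_m}[\pi^e_t(B)]$ is absolutely continuous, and by the standard fact that a random measure with absolutely continuous intensity is itself a.s.\ absolutely continuous, $\pi^e_t\ll du$ for every $t$, $Q^e_m$-a.s. (Alternatively one can run the mollification scheme of the previous paragraph for the energy, with the off-diagonal and diagonal sums now controlled by the time-integrated fourth-moment bound \cref{thm:fourth-moment_bound}; this again uses $\kappa\ge1$.)

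The main obstacle is the first step for the volume: producing a bound on $\mathbb E_N[\langle\pi^{v,N}_t,G\rangle^2]$ whose leading term carries the ``$L^1$''-type weight $\tfrac1N\sum_x|G(x/N)|$ rather than an $L^2$ weight — only the former survives mollification at a scale $\delta\to0$ — and this is exactly what the two-point correlation estimate \cref{lem:correlation_estimate} provides (controlling the off-diagonal sum), in tandem with the elementary maximum-principle bounds for the discrete profiles. For the energy the corresponding difficulty is sidestepped by the sign of $\eta^2$, which reduces everything to the first-moment bound $\max_x e^N_t(x)\le C$; a purely second-moment treatment there would instead require a two-point \emph{energy} correlation estimate, which is not available (and indeed is left open in the paper). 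The remaining ingredients — the passages to the limit, lower semicontinuity, the Fatou/weak-$L^2$ extraction, and the intensity-measure fact — are standard.
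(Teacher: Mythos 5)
Your treatment of the volume is correct and takes a genuinely different route from the paper's. You derive a fixed-time second-moment bound on the mollified field by splitting $\mathbb E_N[\langle\pi^{v,N}_t,G\rangle^2]$ into off-diagonal terms controlled by the two-point correlation estimate (\cref{lem:correlation_estimate}) plus the maximum principle for $v^N_t$, and a diagonal term of order $(N\delta)^{-1}$; the paper instead runs a single time-integrated $L^2$/Fourier argument with a Gaussian mollifier, powered by the fourth-moment bound (\cref{thm:fourth-moment_bound}) and Cauchy--Schwarz, and treats the volume as a straightforward variant of the energy case. Your decomposition buys a pointwise-in-$t$ conclusion at the price of invoking the correlation estimate; the paper's buys uniformity of method at the price of a time integral.

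Your primary argument for the energy, however, has a genuine gap: the ``standard fact that a random measure with absolutely continuous intensity is itself a.s.\ absolutely continuous'' is false. Take $\mu=\delta_U$ with $U$ uniform on $\mathbb T$: its intensity measure is exactly Lebesgue measure, yet every realization is purely atomic. First-moment (intensity) information can never rule out singular realizations; this is precisely why the paper establishes a \emph{second}-moment bound, namely that $E^e_m\big[\int_0^t\int_{\mathbb T}\int_{\mathbb T} q_\delta(u-u')\,\pi^e_s(du)\,\pi^e_s(du')\,ds\big]$ is bounded uniformly in $\delta$, via the time-integrated fourth-moment bound; such a bound controls the near-diagonal mass and hence excludes atoms and singular parts. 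Your parenthetical alternative --- running the mollification scheme for the energy with the sums controlled by \cref{thm:fourth-moment_bound} --- is therefore not optional: it must be the main argument, and it is essentially the paper's proof. Be aware that since the fourth-moment bound is only available after integration in time, that route yields the absolute continuity of $\pi^e_s$ in a time-integrated (a.e.\ $s$) sense rather than at every fixed $t$ as your volume argument achieves.
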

\begin{proof}
In what follows, let us denote by $E^{\sigma}_m$ the expectation with respect to $Q^\sigma_m$ for each $\sigma\in \{v,e\}$.  
Here we only show the assertion for the energy, since the proof works for the volume straightforwardly. 
To this end, we firstly claim that for all $0\leq t\leq T$,
\begin{equation}
\label{eq:L2bound_energy_empirical_measure_Fourier} 
E^e_m \bigg[ \int_0^t\sum_{n\in\mathbb Z} |\widehat\pi^e_s(n)|^2ds\bigg] <+\infty
\end{equation}
where $\widehat\pi^e_t(n)=\int_{\mathbb T}e^{2\pi i nu}\pi^e_t(du)$ for any $n\in\mathbb Z$.
To show the last bound, let $q_t(x)=(2\pi t)^{-1/2} e^{-x^2/(2t)}$ denote the probability density function of a centered normal distribution $\mathcal{N}(0,t)$. 
By the Cauchy-Schwarz inequality  and the boundedness of the density $q_t(x)$, we have that, for every $\delta>0$, 
\begin{equation*}
\begin{split}
\mathbb E_N\bigg[\int_0^t\frac{1}{N^2}\sum_{x,y\in\mathbb T_N}q_\delta\Big(\frac{x-y}{N}\Big)\eta_s(x)^2\eta_s(y)^2ds\bigg] 
\le\mathbb E_N\bigg[\int_0^t\frac{1}{N}\sum_{x\in\mathbb T_N}\eta_s(x)^4ds \bigg]. 
\end{split}
\end{equation*}
By Theorem \ref{thm:fourth-moment_bound}, this is bounded by $C_0+\alpha_N N$, and since we assume that $\kappa>1$, this stays finite as $N\to\infty$. 
On the other hand, by Fatou's lemma, we have that 
\begin{equation*}
\begin{split}
&\liminf_{N\to\infty}\mathbb E_N\bigg[ \int_0^t\frac{1}{N^2}\sum_{x,y\in\mathbb T_N}q_\delta\Big(\frac{x-y}{N}\Big)\eta_s(x)^2\eta_s(y)^2ds \bigg]\\
&\quad\ge E^e_m \bigg[\int_0^t\int_\mathbb T \int_\mathbb T q_\delta(u-u')\pi^e_s(du) \pi^e_s(du')ds \bigg] 
\\&\quad
= E^e_m \bigg[\int_0^t
\sum_{n\in\mathbb Z} |\widehat\pi^e_s(n)|^2 e^{-(\delta/2)(2\pi n)^2}ds \bigg],
\end{split}
\end{equation*}
along some subsequence if necessary, where we used the tightness of the empirical measure of energy in the first estimate, whereas the last identity follows by inserting the Fourier inversion of the Gaussian kernel $q_t(\theta)=\sum_{n\in\mathbb Z}\widehat q_t(n)e^{2\pi in\theta}$ where 
\begin{equation*}
\widehat q_t(n)=\int_\mathbb T q_t(\theta)e^{-2\pi i n\theta}d\theta=e^{-(t/2)(2\pi n)^2}.
\end{equation*}
Hence, by taking $\delta\to0$, by the dominated convergence theorem, the bound \eqref{eq:L2bound_energy_empirical_measure_Fourier} is in force.  

Now it is not hard to show the absolute continuity of the limiting measure, once the $L^2$-bound of the measure is proved. 
Indeed, for any $G\in C(\mathbb T:\mathbb R)$, let $\widehat G(n)=\int_{\mathbb T}e^{-2\pi i nu}G(u)du$ be the Fourier transformation of $G$ for any $n\in\mathbb Z$.
Then, by a standard Fourier analysis, we can write 
\begin{equation*}
\langle\pi^e_t, G\rangle
=\int_{\mathbb T} G(u)\pi_t^e(du)
=\int_{\mathbb T} \sum_{n\in\mathbb Z}
\widehat G(n)e^{2\pi i nu}
\pi_t(du)
=\sum_{n\in\mathbb Z} \widehat G(n) \widehat \pi^e_t(n).
\end{equation*}
Therefore, by the Cauchy-Schwarz inequality,
\begin{equation*}
E^e_m \bigg[ \int_0^t|\langle\pi^e_s,G\rangle|^2ds\bigg]
\le E^e_m\bigg[ \int_0^t\|\widehat{G}\|^2_{\ell^2(\mathbb Z)} \|\widehat \pi_s^e\|^2_{\ell^2(\mathbb Z)} ds\bigg] 
\le \|G\|^2_{L^2(\mathbb T)} 
E^e_m \bigg[ \int_0^t\|\widehat \pi_s^e\|^2_{\ell^2(\mathbb Z)} ds \bigg],
\end{equation*}
which is finite from the assertion in the beginning, where we used the Plancherel identity. 
Now, it is not hard to show that the measure $\pi^e_t$ is absolute continuous with respect to the Lebesgue measure. 
Indeed, first, note that in order to show the absolute continuity of the measure $\pi^e_t$, it is enough to show the following: for any $\varepsilon>0$, there exists some $\delta>0$ such that for any family of disjoint open intervals $\{(a_k,b_k)\}_{k=1,\ldots,m}$ we have $\pi^e_t(A)<\varepsilon$ provided $|A|< \delta$, where we set $A=\bigcup_{k=1}^m (a_k,b_k)$ and $|\cdot|$ stands for the Lebesgue measure.   
In what follows, let $F$ be the closure of the open set $A$ and let us take an approximation of $F$ by closed sets $\{F_n\}_{n\in\mathbb N} \subset \mathbb T$ at level $n\in\mathbb N$, i.e., 
\begin{equation*}
F_n = \{ x\in\mathbb T : \inf_{y\in F}|x-y| \le 1/n\}. 
\end{equation*}
Moreover, let $g_n \in C(\mathbb T: [0,1])$ be such that $g_n(x)=1$ when $x\in F$ whereas $g_n(x)=0$ for $x\in F_n^c$. 
Then, we have that 
\begin{equation*}
\begin{aligned}
E^e_m\bigg[ \int_0^t\pi^e_s(A)ds \bigg]
& \le E^e_m\bigg[ \int_0^t\pi^e_s(F)ds \bigg]
= E^e_m\bigg[ \int_0^t \langle \pi^e_s,\mathbf{1}_F\rangle ds \bigg] \\
&\le E^e_m\bigg[ \int_0^t \langle \pi^e_s,g_n\rangle ds \bigg]
\le C\|g_n\|_{L^2(\mathbb T)}
\le C|F_n|
\end{aligned}
\end{equation*}
with some $C>0$. 
Noting $F_n \searrow F$, we have that the utmost left-hand side of the last display is in fact bounded from above by $C|A|$. 
Therefore, we conclude the absolute continuity of $\pi^e_t$ at any time $t$. 
\end{proof}

Now, as a consequence of the absolute continuity of the limiting measures, it is easy to see that any limit point is concentrated on trajectories satisfying the weak form of the hydrodynamic equation. 
More precisely, we notice firstly for the volume that every limit point $Q^v_m$ of the sequence $\{Q^{v,N}_m\}_N$ is concentrated on trajectories $\pi^v=\{\pi^v_t: t\in[0,T]\}$ satisfying the weak form of the first equation of \eqref{eq:hdl}, that is, for any test function $G\in C^2(\mathbb T)$, we have that
\begin{equation*}
Q^v_m\bigg( \pi^v \, :\, 
\langle \pi^v_t,G\rangle
=\langle\pi^v_0,G\rangle
+\int_0^t\langle\pi^v_s,\Delta G\rangle ds -2\alpha\int_0^t\langle \pi^v_s,\nabla G\rangle ds \bigg) = 1 
\end{equation*}
when $\kappa=1$, whereas the same result holds with $\kappa>1$ with $\alpha=0$.
Analogously, for the energy, assume $\kappa>1$.
Then, every limit point $Q^e_m$ of the sequence $\{Q^{e,N}_m\}_N $ is concentrated on trajectories $\pi^e=\{\pi^e_t: t\in[0,T] \}$ satisfying the weak form of the second equation of \eqref{eq:hdl}: 
\begin{equation*}
Q^e_m \bigg( \pi^e \, :\,
\langle \pi^e_t,G\rangle
=\langle\pi^e_0,G\rangle
+\int_0^t\langle\pi^e_s,\Delta G\rangle ds
\bigg) = 1. 
\end{equation*}
Finally, let us comment here that the convergence along the full sequence follows from the uniqueness of the weak solution limiting equation.

\subsection{Quantitative hydrodynamic limit for energy}
\label{subsec:HDL_quantitative_energy}
In this subsection, we give a proof of \cref{thm:HDL_energy}.  
Our proof relies on the analysis of the volume-volume correlation function and the estimates obtained in Section \ref{sec:two-point_correlation_estimate}. 
For $t\in[0,T]$ and $x\in\mathbb T_N$, let $\gamma_t^N(x)$ be defined by 
\begin{equation*}
\gamma_t^N(x)
= e^N_t(x)-e(t,x/N) 
\end{equation*}
where the second term is defined via the solution of \eqref{eq:hdl} with initial profile $e_0$. 
Then, we have the following estimate for the discrepancy between the discrete and continuous energy profiles, from which we conclude the proof of \cref{thm:HDL_energy}. 

\begin{lemma}
There exists some $C=C(T,v,e)>0$ such that  
\begin{equation}
\label{eq:limit_energy profike}
\sup_{0\le t\le T}\max_{x\in\mathbb T_N}
| \gamma^N_t(x)| \le C\frac{\log N}{N} . 
\end{equation}
\end{lemma}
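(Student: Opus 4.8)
The plan is to obtain a closed evolution equation for $\gamma^N_t(x)=e^N_t(x)-e(t,x/N)$, and then apply Duhamel's principle together with heat-kernel estimates for the random walk generated by $\Delta^N$. Recall from \eqref{eq:discrete_hdl} that $e^N_t$ satisfies $\partial_t e^N_t(x)=\Delta^N e^N_t(x)+2\alpha_N N\,\nabla^N(v^N_t(x-1)v^N_t(x))$ with $\kappa=1$, so $2\alpha_N N=2\alpha$. On the other hand, the continuous profile $e(t,u)$ satisfies $\partial_t e=\Delta e+2\alpha\nabla v^2$, and by Taylor expansion $\Delta^N(e(t,x/N))=\Delta e(t,x/N)+O(N^{-2}\|e\|_{C^4})$ and $2\alpha \nabla^N(v(t,\tfrac{x-1}{N})v(t,\tfrac xN))=2\alpha\nabla v^2(t,x/N)+O(N^{-1}\|v\|_{C^3}^2)$, using that $v\in C^\infty_b$ and the regularity of the solution of the heat equation with smooth data. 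Subtracting, we obtain
\begin{equation*}
\partial_t\gamma^N_t(x)=\Delta^N\gamma^N_t(x)+2\alpha\big(\nabla^N(v^N_t(x-1)v^N_t(x))-\nabla^N(v(t,\tfrac{x-1}{N})v(t,\tfrac xN))\big)+R^N_t(x),
\end{equation*}
where $\|R^N_t\|_{\ell^\infty}\le C(v,e)/N$.

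The main point is to control the first bracket. Writing $v^N_t(x)=v(t,x/N)+\rho^N_t(x)$ with $\rho^N_t(x)=v^N_t(x)-v(t,x/N)$, the bracket expands into terms linear and quadratic in $\rho^N$. By \cref{thm:HDL_energy}'s companion statement — more precisely, the quantitative control of the volume profile available because $v^N_t$ solves the \emph{discrete heat equation with drift} \eqref{eq:discrete_hdl} and discrete maximum principles apply — one shows $\|\rho^N_t\|_{\ell^\infty}\le C/N$ uniformly on $[0,T]$ (this is the volume analogue, proved by the same Duhamel argument but without any feedback term, hence routine). Since moreover $\|v^N_t\|_{\ell^\infty}\vee\|\nabla^N v^N_t\|_{\ell^\infty}\le C$ by \eqref{eq:assumption_initial_measure_volume} propagated through the discrete equation, the discrete gradient $\nabla^N$ applied to a product of two bounded factors each within $C/N$ of its continuous counterpart is bounded in $\ell^\infty$ by $C/N$ — there is no loss of a factor $N$ here because $\nabla^N$ of a \emph{smooth} discretized function is $O(1)$ and $\nabla^N\rho^N_t$ is also $O(1)$ (again by the discrete maximum principle applied to the discrete-gradient equation). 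Hence the whole inhomogeneity in the $\gamma^N$-equation is $O(1/N)$ in $\ell^\infty$.

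With $\gamma^N_0\equiv 0$ (by \eqref{eq:initial_cond}) and inhomogeneity $F^N_s$ satisfying $\|F^N_s\|_{\ell^\infty}\le C/N$, Duhamel gives
\begin{equation*}
\gamma^N_t(x)=\int_0^t\sum_{y\in\mathbb T_N}p^N_{t-s}(x,y)F^N_s(y)\,ds,
\end{equation*}
where $p^N$ is the transition kernel of $\{Y^{(1)}_t\}$. A naive bound yields only $\|\gamma^N_t\|_{\ell^\infty}\le Ct/N$, which is already enough for the rate $C/N$ — \emph{if} the inhomogeneity were genuinely $O(1/N)$ in sup norm with no hidden singularity. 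The subtlety — and the reason the stated bound carries a $\log N$ — is that part of $F^N_s$ is not a smooth discretization but involves the volume two-point correlation $\varphi^N_s$: indeed closing the equation for $e^N_t=\mathbb E_N[\eta_t(x)^2]$ against $\mathbb E_N[\eta_t(x)\eta_t(x+1)]=v^N_t(x)v^N_t(x+1)+\varphi^N_t(x,x+1)$ produces a term $2\alpha\,\nabla^N\varphi^N_t(x,x+1)$. By \cref{lem:correlation_estimate}, $\|\varphi^N_t\|_{\ell^\infty(V^{(2)}_N)}\le C/N$, but its discrete gradient $\nabla^N$ could a priori be $O(1)$ rather than $O(1/N)$, so this term contributes $O(1)$ pointwise and must be handled by a \emph{summation by parts}: $\sum_y p^N_{t-s}(x,y)\nabla^N\varphi^N_s(y,y+1)=-\sum_y (\nabla^N)^* p^N_{t-s}(x,y)\,\varphi^N_s(y,y+1)$, and then $\sum_y|\nabla^N p^N_r(x,y)|\le C/\sqrt r$ by the local CLT / heat-kernel gradient estimate for the random walk accelerated by $N^2$. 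Integrating $\int_0^t (C/N)\cdot(C/\sqrt{(t-s)N^{-2}\cdot N^2})^{-1}$... more carefully: $\int_0^t \frac{C}{N}\cdot \frac{1}{\sqrt{t-s}}\,ds\le \frac{C\sqrt t}{N}$, which is still $O(1/N)$ — so actually the gradient-of-correlation term is fine. The genuine source of the logarithm is the \emph{diagonal} contribution: the correlation estimate degrades near $y=x$, and the quantity one really integrates is of the form $\int_0^t p^N_{t-s}(x,x)\,ds\cdot O(1/N)$, and $p^N_r(x,x)\sim C/\sqrt{rN^2}=C/(N\sqrt r)$ is integrable, whereas a term like $\int_0^t \frac{1}{N}\cdot\frac{N}{N^2(t-s)+1}\,ds = \frac{1}{N}\int_0^t\frac{ds}{N(t-s)+N^{-1}}\asymp \frac{\log N}{N}$ arises from the short-time regime where the random walk has not yet spread.

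Concretely, the last step is: split the Duhamel integral at time $t-N^{-2}$ (the microscopic time scale). On $[0,t-N^{-2}]$ use $\sum_y p^N_r(x,y)|\cdot|\le \|p^N_r\|_{\ell^\infty}\cdot(\text{mass})$ combined with $\|p^N_r\|_{\ell^\infty}\le C/(N\sqrt r)$ and the $O(1/N)$ bound on the inhomogeneity, getting $\le \frac{C}{N}\int_{N^{-2}}^t\frac{dr}{N\sqrt r}\le \frac{C}{N^2}$; on the remaining window $[t-N^{-2},t]$ of length $N^{-2}$ use the crude bound $\|F^N_s\|_{\ell^\infty}\le C\cdot(\text{something }O(N))$ — this is where the worst term, e.g. $\alpha_N N\nabla^N$ of a quantity only known to be $O(1)$, costs a factor $N$, but multiplied by the window length $N^{-2}$ this is $O(1/N)$; and the logarithm comes precisely from an intermediate-time term of the form $\int_{N^{-2}}^{1}\frac{dr}{Nr}=\frac{\log N}{N}$ which appears when the inhomogeneity is $O(1/N)$ but lacks the extra decay that would make it integrable without the log. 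Thus $\sup_{0\le t\le T}\|\gamma^N_t\|_{\ell^\infty}\le C\frac{\log N}{N}$.

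\textbf{Main obstacle.} The hard part is \emph{not} the Duhamel/heat-kernel bookkeeping but establishing that the feedback term closing the energy equation — which involves $\mathbb E_N[\eta_s(x)\eta_s(x+1)]$ and hence the two-point volume correlation $\varphi^N_s$ together with discrete gradients of the volume profile $v^N_s$ — is controlled in a norm strong enough to feed back into the $\ell^\infty$ estimate for $\gamma^N$. This is exactly why the theorem assumes $\kappa=1$ (so $\alpha_N N=\alpha$ stays bounded, otherwise the coupling term $\alpha_N N\nabla^N(v^N v^N)$ would blow up) and why it crucially invokes \cref{lem:correlation_estimate}; without the $1/N$ decay of $\varphi^N$, the argument collapses. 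The $\log N$ is the price paid for the short-time singularity of the discrete heat kernel convolved against an $O(1/N)$ source, and tracking it requires the sharp on-diagonal bound $p^N_r(x,y)\le C/(N\sqrt r)$, which is the one genuinely non-routine analytic input (supplied by the random-walk estimates in the Appendix).
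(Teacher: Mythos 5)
Your skeleton is the same as the paper's: write the evolution equation for $\gamma^N_t$, recognize that closing the energy equation produces the off-diagonal correlation $\varphi^N_t(x,x\pm1)$ multiplied by $2\alpha_N N^2$, bound the purely deterministic discretization errors by $C/N$, apply Duhamel with the kernel of $Y^{(1)}$, sum by parts to move $\nabla^N$ onto the kernel, and split the time integral at a small $t_0$. That is exactly the paper's route, and your identification of \cref{lem:correlation_estimate} as the crucial input is right.

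The problem is the final accounting, which is where the lemma actually lives. First, you invoke an $\ell^1$ gradient heat-kernel bound $\sum_y|\nabla^N p^N_r(x,y)|\le C/\sqrt r$ and correctly note that it would make the correlation term $O(\sqrt t/N)$ with no logarithm; but that $\ell^1$ bound is not what the paper provides (\cref{lem:SRW_transition_prob_estimate} gives only the pointwise bound $\max_{x,y}|\nabla^N P_x(Y^{(1)}_r=y)|\le C/(Nr)$, whose sum over $y$ is only $C/r$), and you do not prove it. Second, having (tentatively) disposed of that term, you relocate the logarithm to a ``diagonal contribution'' where ``the correlation estimate degrades near $y=x$'' and to $p^N_r(x,x)$ on-diagonal asymptotics. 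This is not correct: the correlation estimate \eqref{eq:volume_correlation_estimate} is uniform over all of $V^{(2)}_N$, the only pairs entering the source term are the nearest-neighbour pairs $(y,y+1)$, which are always off-diagonal, and no on-diagonal heat-kernel value appears anywhere in the computation. The actual origin of the $\log N$ is precisely the term you declared ``fine'': with the available pointwise gradient bound one gets, for $s\ge t_0$, the integrand $2\alpha_N N\cdot\frac{C}{N}\cdot\sum_y|\nabla^N_y P_x(Y^{(1)}_s=y)|\le \frac{C}{Ns}$, whose integral over $[t_0,t]$ is $\frac{C}{N}\log(1/t_0)$; the short window $[0,t_0)$ is handled with the trivial bound $\sum_y|\nabla^N_yP|\le 2N$, contributing $C\alpha_N N t_0$, and choosing $t_0=N^{-\delta}$ with $\delta>1$ yields $C\log N/N$. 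As written, your argument either proves a stronger bound from an unestablished kernel estimate or derives the logarithm from a mechanism that is not present; either way the stated estimate is not actually established. (A minor point: the paper gets $\|v^N_t-v(t,\cdot/N)\|_{\ell^\infty}\le CN^{-2}$, not $C/N$, but this does not affect the outcome.)
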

\begin{proof} 
It is easy to check that the energy element $e_t^N$ satisfies  
\begin{equation}
\label{eq:disc_ene}
\partial_t e^N_t(x)
= \Delta^N e^N_t(x)
+ 2\alpha_NN 
\nabla^N \mathbb E_N\big[\eta_t(x-1)\eta_t(x)\big].
\end{equation}
Then, a simple computation shows that
\begin{equation*}
\begin{split}
\partial_t\gamma_t^N(x)
&=\Delta^N\gamma^N_t(x) 
+\Delta^Ne(t,x/N) -\Delta e(t,x/N)
+2\alpha_N N^2 \big(\varphi_t^N(x,x+1)-\varphi_t^N(x-1,x)\big)\\
&\quad+ 2\alpha_NN \nabla^N\big( v^N_t(x-1) v_t^N(x)\big)-2\alpha\nabla v(t,x/N)^2.
\end{split}
\end{equation*}
Note that since the energy profile $e$ is smooth in the spatial variable, by Taylor's theorem, we have that 
\begin{equation*}
\sup_{0\le t\le T} \max_{x\in\mathbb T_N} 
\big| 
\Delta^Ne(t,x/N) -\Delta e(t,x/N)\big| 
\le C
N^{-2}
\end{equation*}
for some constant $C=C(T,e)>0$. 
Moreover, since $v^N_t(x)$ and $v(t,u)$ solve, respectively, the discrete and continuous heat equations, it is not hard to check that 
\begin{equation}
\label{eq:volume_discrepancy_disc_conti}
\sup_{0\leq t\leq T}
\max_{x\in\mathbb T_N} 
| v_t^N(x)- v(t,x/N) |
\le CN^{-2} 
\end{equation}
with some constant $C=C(T,v)>0$.  
Moreover, note that we have the decomposition 
\begin{equation*}
\begin{aligned}
& \nabla^N\big( v^N_t(x-1) v_t^N(x)\big)-\nabla v(t,x/N)^2\\
&\quad= \big[ \nabla^N\big( v^N_t(x-1) v_t^N(x)\big)
- \nabla^Nv_t^N(x)^2\big] 
+ \big[ \nabla^Nv_t^N(x)^2
- \nabla^N v(t,x/N)^2\big] \\
&\qquad+ \big[ \nabla^N v(t,x/N)^2
- \nabla v(t,x/N)^2\big] . 
\end{aligned}
\end{equation*}
Then, we can show that the error bound \eqref{eq:volume_discrepancy_disc_conti} propagates to the second and third terms on the right-hand side of the last display, whereas for the first term is estimated by $O(N^{-1})$ by a simple computation. 
Hence, we have the bound 
\begin{equation*}
\sup_{0\le t\le T} \max_{x\in\mathbb T_N} 
\big| 
\nabla^N\big( v^N_t(x-1) v_t^N(x)\big)-\nabla v(t,x/N)^2\big| 
\le C N^{-1}
\end{equation*}
with some $C=C(T,v)>0$. 
By this line, we have that 
\begin{equation*}
\partial_t\gamma_t^N(x)
=\Delta^N\gamma^N_t(x)+2\alpha_N N^2 \big( \varphi_t^N(x+1,x)-\varphi_t^N(x,x-1)\big)
+ R^N_t(x),
\end{equation*}
where the reminder term $R^N_t(x)$ satisfies the bound 
\begin{equation*}
\sup_{0\leq t\leq T}
\max_{x\in\mathbb T_N} 
| R^N_t(x)| \le CN^{-1}
\end{equation*}
with some $C=C(T,v,e)>0$. 
To proceed further, let $\{Y^{(1)}_t:t\ge 0\}$ be a continuous-time symmetric one-dimensional random walk on $\mathbb T_N$, generated by $\Delta^{(1)}_N$, that is, the one with rate $N^2$ in all directions, starting from $x\in\mathbb T_N$, and additionally, we denote by $P_x$ the probability associated to this random walk and write the expectation with respect to $ P_x$ by $E_x$. 
Then, by Duhamel's principle, we have the following random walk representation: 
\begin{equation*}
\begin{aligned}
\gamma_t^N(x)
&= E_x\big[\gamma_0^N(Y^{(1)}_t)\big] \\
&\quad+2\alpha_N N^2\int_0^t\sum_{y\in\mathbb T_N}
\big( \varphi^N_{t-s}(y+1,y)- \varphi^N_{t-s}(y-1,y) \big)
P_{x}^N(Y^{(1)}_s=y)ds 
+R^N_t(x). 
\end{aligned}
\end{equation*}
By summation by parts applied to the last display, we have that  
\begin{equation*}
\begin{aligned}
\sup_{0\leq t\leq T}
\|\gamma^N_t\|_{\ell^\infty(\mathbb T_N)}  
&\le\|\gamma_0^N\|_{\ell^\infty(\mathbb T_N)} 
+2\alpha_N N\bigg|\int_0^t\sum_{y\in\mathbb T_N}\varphi^N_s(y+1,y)
\nabla^N_y P_{x}^N(Y^{(1)}_s=y)ds\bigg| \\
&\quad+ \sup_{0\leq t\leq T}
\|R^N_t\|_{\ell^\infty(\mathbb T_N)} .
\end{aligned}
\end{equation*}
Above, we used the notation $\nabla^N_y$ to emphasize that the discrete derivative $\nabla^N$ is acting on the variable $y$. 
Now, in order to estimate the above integral, we will consider a fixed small time $t_0>0$ that will be optimized later, splitting the integral in the short-time interval $[0,t_0)$ and the rest $[t_0,t)$. 
For the short-time interval we have that
\begin{equation*}
\bigg| \int_0^{t_0}\sum_{y\in\mathbb T_N}\varphi^N_s(y+1,y) \nabla^N_y P_x(Y^{(1)}_s=y)ds \bigg|
\le 2t_0,
\end{equation*}
where we used \cref{lem:correlation_estimate} and the fact that $P_x$ is a probability satisfying $\sum_{y} P_x(Y^{(1)}_t=y)=1$ for any $t$ and $x$. 
On the other hand, for long times, by \cref{lem:SRW_transition_prob_estimate} and \cref{lem:correlation_estimate}, we have the bound 
\begin{equation*}
\begin{aligned}
\bigg|\int_{t_0}^{t}\sum_{y\in\mathbb T_N}\varphi^N_s(y+1,y) \nabla^N_y P_x(Y_s^{(1)}=y)ds\bigg|
\le \int_{t_0}^{t}\frac{C}{N s}ds,
\end{aligned}
\end{equation*}
which is of order $O(N^{-1}\log (1/t_0))$.  
Thus, 
\begin{equation*}
\sup_{0\leq t\leq T}\|\gamma^N_t\|_{\ell^\infty(\mathbb T_N)}
\le\|\gamma_0^N\|_{\ell^\infty(\mathbb T_N)} 
+ C\alpha_N \big(N t_0+ \log(1/t_0) \big)
+ CN^{-1} .
\end{equation*}
From the last bound, by taking $t_0=N^{-\delta}$ with some $\delta>1$, we conclude
\begin{equation*}
\sup_{0\leq t\leq T} \|\gamma^N_t\|_{\ell^\infty(\mathbb T_N)}
\leq\|\gamma_0^N\|_{\ell^\infty(\mathbb T_N)}
+
C\frac{\log N}{N},
\end{equation*}
and thus we complete the proof.
\end{proof}

\section{Non-Equilibrium fluctuations}
\label{sec:fluctuations}
Here we give the proof of \cref{thm:main fluctuations}.
Our starting point is to obtain a martingale decomposition for the volume fluctuation field $\mathcal{V}^N_\cdot$.

\subsection{Auxiliary martingales}
By Dynkin's martingale formula, for any $G\in C^2(\mathbb T)$, the processes
\begin{equation}
\label{eq:dynkin_martingale_definition}
\mathcal{M}^{N}_t(G)
= \mathcal{V}^N_t(G)
- \mathcal{V}^N_t(G)
+ \int_0^t (\partial_s + L_N) \mathcal{V}^N_s(G)ds 
\end{equation}
and $\mathcal{M}^N_t(G)^2 - \langle \mathcal{M}^N(G)\rangle_t$ are martingales where 
\begin{equation}
\label{eq:qv_volume_fluctuation}
\begin{aligned}
\langle \mathcal{M}^N(G)\rangle_t 
&=\int_0^t(L_N \mathcal{V}^N_s(\varphi)^2 - 2 \mathcal{V}^N_s(\varphi)L_N \mathcal{V}^N_s(\varphi) \big) ds \\
&= \int_0^t \frac{1}{N}\sum_{x\in\mathbb T_N} 
\big(\eta_s(x+1)-\eta_s(x))\big)^2 \big(T^-_{f_Ns}\nabla^NG(x/N)\big)^2 ds .
\end{aligned}
\end{equation}
Now we compute the action of the generator. 
First, for the symmetric part, a simple computation gives 
\begin{equation*}
\begin{aligned}
S_N \mathcal{V}^N_t(G)
= \mathcal{V}^N_t(\Delta^NG)
+ \frac{1}{\sqrt{N}}\sum_{x\in\mathbb Z}
\big(\Delta^N  v^N_t(x)\big) T^-_{f_Nt}G(x/N). 
\end{aligned}
\end{equation*}
On the other hand, for the anti-symmetric part, we compute 
\begin{equation}
\label{eq:anti-symmetric_part_computation}
\begin{aligned}
(\partial_t + A_N) \mathcal{V}^N_t(G)
&= 2\alpha_N N \mathcal{V}^N_t(\widetilde{\nabla}^N G) 
- \frac{f_N}{N} \mathcal{V}^N_t (\nabla G) 
\\ &\quad 
+ 2\alpha_N N^{1/2}\sum_{x\in\mathbb Z}
v^N_t(x) \widetilde{\nabla}^N T^-_{f_Nt}G(x/N).
\end{aligned}
\end{equation}
Note that, recalling the definition of the moving frame \eqref{eq:moving_frame_definition}, by the Cauchy-Schwarz inequality, the first two terms on the right-hand side of \eqref{eq:anti-symmetric_part_computation} can be treated as follows
\begin{equation*}
\begin{aligned}
&\mathbb E_N \bigg[\sup_{0\le t\le T} \bigg|\int_0^t 
\Big( 2\alpha_NN \mathcal{V}^N_s(\widetilde \nabla^NG) 
- \frac{f_N}{N} \mathcal{V}^N_s(\nabla G)\Big) ds \bigg|^2 \bigg] \\
&\quad\le CT \alpha_N^2N^2 \int_0^T \mathbb E_N \Big[ 
\mathcal{V}^N_t(\widetilde \nabla^NG-\nabla G)^2\Big] dt  \\
&\quad\le CT^2\alpha_N^2N
\Big(\max_{x\in\mathbb T_N}\big|\widetilde \nabla^NG(x/N) - \nabla G(x/N)\big|^2\Big) 
\int_0^T \sum_{x,y\in\mathbb T_N} \mathbb E_N\big[\overline\eta_t(x)\overline\eta_t(y)\big] dt
\end{aligned}
\end{equation*}
for some $C>0$. 
Note that the discrepancy between the discrete derivative and the continuous one is estimated as of order $O(N^{-2})$.
Thus, using the correlation estimate~\eqref{eq:volume_correlation_estimate}, we can easily see that the utmost right-hand side of the last display vanishes as $N\to \infty$.  
Hence, we have the following decomposition for the volume fluctuation field:
\begin{equation}
\label{eq:martingale_decomposition}
\mathcal{V}^N_t(G)
= \mathcal{V}^N_0(G)
+ \int_0^t \mathcal{V}^N_s(\Delta^NG) ds 
+ \mathcal{M}^N_t(G) 
+ \mathcal R^N_t(G)
\end{equation}
where the reminder term $\mathcal R^N_\cdot$ is negligible in the following sense: 
\begin{equation}\label{eq:remainder}
\limsup_{N\to+\infty}
\mathbb E_N\Big[ \sup_{0\le t \le T} 
|\mathcal R^N_t(G)|^2\Big] = 0
\end{equation}
for any $G\in C^\infty(\mathbb T)$.

\subsection{Tightness}\label{sec:flu:tight}
In what follows, we show that each term in the decomposition \eqref{eq:martingale_decomposition} is tight.

\begin{lemma}
\label{lem:tightness_fluctuation}
Assume $m>5/2$.
Then, the sequence of volume fluctuation fields $\{\mathcal V^N_t\}_{N\in\mathbb N}$ is tight in $D([0,T],H^{-m}(\mathbb T))$. 
\end{lemma}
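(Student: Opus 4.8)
The plan is to follow the argument used for the volume empirical measure in \cref{lem:tightness_volume}. By the tightness criterion for $H^{-m}(\mathbb T)$-valued processes recalled in \cref{subsec:HDL_tightness} (see \cite[Chapter 11]{kipnis1999scaling}), it is enough to verify two points. First, the uniform bound $\limsup_{N}\mathbb E_N[\sup_{0\le t\le T}\|\mathcal V^N_t\|_{-m}^2]<\infty$ and the tail estimate $\lim_{n\to\infty}\limsup_{N}\mathbb E_N[\sup_{0\le t\le T}\sum_{|z|\ge n}\gamma_z^{-m}\langle\mathcal V^N_t,h_z\rangle^2]=0$. Second, for every fixed $z\in\mathbb Z$ and $\varepsilon>0$, the equicontinuity estimate $\lim_{\delta\to 0}\limsup_{N}\mathbb P_N(\omega'_\delta(\langle\mathcal V^N_\cdot,h_z\rangle)>\varepsilon)=0$ together with the vanishing of the jump size $\sup_t|\langle\mathcal V^N_t,h_z\rangle-\langle\mathcal V^N_{t-},h_z\rangle|$. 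Everything reduces to the single estimate
\[
\limsup_{N\to\infty}\mathbb E_N\Big[\sup_{0\le t\le T}|\mathcal V^N_t(h_z)|^2\Big]\le C(1+z^4)\qquad(z\in\mathbb Z),
\]
for some $C=C(T)>0$, which is the fluctuation-field analogue of \cref{lem:HDL_tighness_key_estimate}.

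To prove this key estimate I would insert $G=h_z$ into the decomposition \eqref{eq:martingale_decomposition} and estimate its four terms separately. The computational heart is the identity, valid for every $G\in C(\mathbb T)$ with $\psi_s(x)=T^-_{f_N s}G(x/N)$,
\[
\mathbb E_N\big[\mathcal V^N_s(G)^2\big]
=\frac{1}{N}\sum_{x\ne y}\varphi^N_s(x,y)\psi_s(x)\psi_s(y)
+\frac{1}{N}\sum_{x}\big(e^N_s(x)-v^N_s(x)^2\big)\psi_s(x)^2,
\]
where the off-diagonal sum is $O(\|G\|_\infty^2)$ by the correlation estimate \eqref{eq:volume_correlation_estimate}, and the diagonal sum is at most $\|G\|_\infty^2\cdot\frac{1}{N}\sum_x e^N_s(x)$. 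The crucial point is that the total energy $\sum_x\eta_s(x)^2$ is conserved by the dynamics, so that $\frac{1}{N}\sum_x e^N_s(x)=\frac{1}{N}\sum_x e^N_0(x)\le C$ uniformly in $s$ by \eqref{eq:assumption_initial_meaure_energy}; hence $\mathbb E_N[\mathcal V^N_s(G)^2]\le C\|G\|_\infty^2$ for all $s$. Applying this with $G$ replaced by $h_z$ and by its discrete derivatives, and using $\|h_z\|_\infty\le\sqrt2$, $\|\nabla^N h_z\|_\infty\le\|h_z'\|_\infty\le C|z|$ and $\|\Delta^N h_z\|_\infty\le\|h_z''\|_\infty\le Cz^2$, one finds: the initial term of \eqref{eq:martingale_decomposition} is $O(1)$; the drift term $\int_0^t\mathcal V^N_s(\Delta^N h_z)\,ds$ is $O(z^4)$ after a Cauchy--Schwarz inequality in time; the martingale term is $O(z^2)$ by Doob's inequality together with \eqref{eq:qv_volume_fluctuation}, bounding $(\eta_s(x+1)-\eta_s(x))^2\le 2\eta_s(x+1)^2+2\eta_s(x)^2$ and invoking energy conservation once more; and the remainder term contributes $0$ to the $\limsup$ by \eqref{eq:remainder}. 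Summation over these yields the key estimate.

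Granting it, the first point follows exactly as in the corollary after \cref{lem:HDL_tighness_key_estimate}, the crux being that $\sum_z\gamma_z^{-m}(1+z^4)<\infty$ precisely when $m>5/2$ (recall $\gamma_z=1+4\pi^2z^2$); one only has to observe in addition that, since $\mathcal V^N_t$ carries $O(N)$ independent Fourier modes, the relevant sums may be truncated to $|z|\le N$, and on that range the contribution of the remainder term --- polynomial in $z$ and a negative power of $N$ --- is summable and tends to zero. For the second point, I decompose $\langle\mathcal V^N_t,h_z\rangle$ once more via \eqref{eq:martingale_decomposition}: the initial term is constant in $t$; the drift term has modified modulus of continuity at most $\delta^{1/2}(\int_0^T\mathcal V^N_s(\Delta^N h_z)^2\,ds)^{1/2}$, whose square has expectation $O(\delta z^4)\to 0$ as $\delta\to 0$; the remainder term has modulus of continuity at most $2\sup_t|\mathcal R^N_t(h_z)|$, negligible by \eqref{eq:remainder}; and for the martingale term $\mathbb E_N[\langle\mathcal M^N(h_z)\rangle_{\tau+\theta}-\langle\mathcal M^N(h_z)\rangle_\tau]\le C\theta z^2$ by the same energy-conservation bound on the quadratic variation \eqref{eq:qv_volume_fluctuation}, so Aldous' criterion (\cite[Proposition 4.1.6]{kipnis1999scaling}) applies; finally the jumps of $\langle\mathcal V^N_\cdot,h_z\rangle$ are controlled by conservation of the total volume, exactly as in the proof of \cref{lem:tightness_volume}. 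This gives tightness of $\{\mathcal V^N_\cdot\}_N$ in $D([0,T],H^{-m}(\mathbb T))$.

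I expect the main obstacle to be the key estimate, specifically the control of quadratic functionals of $\eta$ in the absence of any a priori pointwise bound on the second moments $e^N_s(x)$. This is exactly where the two ingredients already established become essential: the two-point correlation estimate \cref{lem:correlation_estimate} disposes of the off-diagonal sums, while the conservation of total energy reduces the diagonal sums to the time-independent, uniformly bounded quantity $\frac{1}{N}\sum_x e^N_0(x)$. The moving frame $T^-_{f_N s}$ enters only through sup-norms of shifted test functions, which are shift invariant, and so causes no additional difficulty.
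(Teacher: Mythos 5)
Your proposal follows essentially the same route as the paper: reduce tightness to the key bound $\limsup_N\mathbb E_N[\sup_{0\le t\le T}|\mathcal V^N_t(h_z)|^2]\le C(1+z^4)$, prove it term by term from the decomposition \eqref{eq:martingale_decomposition} using the two-point correlation estimate \cref{lem:correlation_estimate} for the off-diagonal sums and Doob's inequality plus \eqref{eq:qv_volume_fluctuation} for the martingale, then conclude with the Sobolev-space criterion and Aldous' criterion; your explicit appeal to energy conservation for the diagonal sums is precisely what the paper's invocation of \eqref{eq:assumption_initial_meaure_energy} implicitly relies on. The only cosmetic slip is that the jump sizes of the field (which involve $\eta_t(x+1)-\eta_t(x)$) are controlled via conservation of the total \emph{energy}, not the total volume, exactly as in the paper's proof of continuity of the limiting trajectories.
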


The strategy is analogous to \cref{subsec:HDL_tightness}, so that we only present the crucial estimates here.  
First, note that we have the following key estimate to show the tightness.
Here, recall from \eqref{eq:sobolev_CONS_definition} the definition of the function $h_z$. 

\begin{lemma}
There exists some $C=C(T)>0$ such that for any $z\in\mathbb Z$,
\begin{equation*}
\limsup_{N\to+\infty}
\mathbb E_N\Big[ \sup_{0\le t\le T} 
|\mathcal V^N_t(h_z)|^2 \Big]
\le C(1+z^4).
\end{equation*}
\end{lemma}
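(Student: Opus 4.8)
The plan is to control each term in the martingale decomposition \eqref{eq:martingale_decomposition} evaluated at $G=h_z$, exactly mimicking the argument in \cref{lem:HDL_tighness_key_estimate} but now at the fluctuation scale $N^{-1/2}$. The starting point is that
\begin{equation*}
\mathcal V^N_t(h_z)
= \mathcal V^N_0(h_z)
+ \int_0^t \mathcal V^N_s(\Delta^N h_z)\, ds
+ \mathcal M^N_t(h_z)
+ \mathcal R^N_t(h_z),
\end{equation*}
so by Cauchy--Schwarz it suffices to bound the four contributions separately. For the initial term, $\mathbb E_N[\mathcal V^N_0(h_z)^2] = \tfrac1N\sum_{x,y}\varphi_0^N(x,y)\,h_z(\tfrac xN)h_z(\tfrac yN)$, which by Assumption~\ref{assump:initial_measure}, specifically \eqref{eq:assumption_initial_measure_correlation}, is bounded by $C\|h_z\|_\infty^2 \le C$ uniformly in $N$ and $z$; alternatively one simply invokes the assumed convergence of $\mathcal V^N_0$. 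The remainder term $\mathcal R^N_t(h_z)$ contributes nothing in the limit by \eqref{eq:remainder}.

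The heart of the estimate is the drift integral and the martingale. For the drift, write $\mathbb E_N[(\sup_t \int_0^t \mathcal V^N_s(\Delta^N h_z)\,ds)^2] \le T\int_0^T \mathbb E_N[\mathcal V^N_s(\Delta^N h_z)^2]\,ds$, and expand $\mathcal V^N_s(\Delta^N h_z)^2 = \tfrac1N\sum_{x,y}\overline\eta_s(x)\overline\eta_s(y)\, T^-_{f_Ns}\Delta^N h_z(\tfrac xN)\,T^-_{f_Ns}\Delta^N h_z(\tfrac yN)$; taking expectation replaces $\overline\eta_s(x)\overline\eta_s(y)$ by $\varphi^N_s(x,y)$ for $x\neq y$ and by the centered second moment $\mathbb E_N[\overline\eta_s(x)^2]$ on the diagonal. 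Using the correlation estimate \cref{lem:correlation_estimate} the off-diagonal sum is $O(N^{-1})\cdot N^2 \cdot \|\Delta^N h_z\|_\infty^2 \cdot N^{-1} = O(\|\Delta^N h_z\|_\infty^2)$, and the diagonal sum contributes $\tfrac1N\cdot N\cdot \|\Delta^N h_z\|_\infty^2 \cdot \sup_{s,x}\mathbb E_N[\overline\eta_s(x)^2]$; since $\|\Delta^N h_z\|_\infty \le Cz^2$ and the relevant second moment is controlled via the fourth-moment bound \cref{thm:fourth-moment_bound} (note $\alpha_N N = \alpha N^{1-\kappa}\le\alpha$ for $\kappa\ge1$) or directly by $e^N_s(x) + v^N_s(x)^2$ bounds, this whole contribution is $O(z^4)$. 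For the martingale, by Doob's inequality $\mathbb E_N[\sup_{t\le T}\mathcal M^N_t(h_z)^2]\le 4\,\mathbb E_N[\langle\mathcal M^N(h_z)\rangle_T]$, and from \eqref{eq:qv_volume_fluctuation} this quadratic variation equals $\int_0^T \tfrac1N\sum_x \mathbb E_N[(\eta_s(x+1)-\eta_s(x))^2]\,(T^-_{f_Ns}\nabla^N h_z(\tfrac xN))^2\,ds$; bounding $(\eta_s(x+1)-\eta_s(x))^2 \le 2(\eta_s(x+1)^2+\eta_s(x)^2)$ and using $\|\nabla^N h_z\|_\infty\le C|z|$ together with the fourth-moment bound (or the energy-profile estimate of \cref{thm:HDL_energy}, or \eqref{eq:assumption_initial_meaure_energy} propagated) gives $O(z^2)$, hence $O(1+z^4)$.

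Combining the four bounds and taking $\limsup_{N\to\infty}$ yields the claimed estimate $\limsup_N \mathbb E_N[\sup_{t\le T}|\mathcal V^N_t(h_z)|^2] \le C(1+z^4)$. The main obstacle I anticipate is the diagonal/second-moment term in the drift expansion: unlike in the exclusion-process setting there is no a priori uniform second-moment bound on $\eta_s(x)$, so one must carefully route this through either \cref{thm:fourth-moment_bound} (controlling the time-integrated fourth moment, hence by Cauchy--Schwarz the time-integrated second moment) or through the propagation of the energy-profile bound $\sup_{s,x} e^N_s(x) \le C$, which in turn requires \cref{thm:HDL_energy} or the discrete maximum principle for \eqref{eq:disc_ene}; one must check the time-integration is compatible with the $\sup_t$ outside. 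The factor $\alpha_N N$ appearing in \cref{thm:fourth-moment_bound} is harmless precisely because $\kappa\ge 1$, so no extra restriction on $\kappa$ is needed beyond what is already assumed. Once this second-moment routing is settled, the rest is the same bookkeeping as in \cref{subsec:HDL_tightness}, and the passage from this key estimate to tightness of $\{\mathcal V^N_\cdot\}$ in $D([0,T],H^{-m}(\mathbb T))$ for $m>5/2$ follows verbatim from the Sobolev-space tightness criterion and the Aldous criterion applied to $\mathcal M^N_\cdot(f)$, exactly as for the volume empirical measure.
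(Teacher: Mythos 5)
Your proposal is correct and follows essentially the same route as the paper: the Dynkin decomposition \eqref{eq:martingale_decomposition} with $G=h_z$, the correlation estimate of \cref{lem:correlation_estimate} for the initial field and the drift integral after Cauchy--Schwarz, and Doob's inequality plus the quadratic variation \eqref{eq:qv_volume_fluctuation} for the martingale. Your explicit treatment of the diagonal term $\mathbb E_N[\overline\eta_s(x)^2]$ (which the correlation estimate does not cover and which the paper elides) is a sound refinement of the same argument, not a different one.
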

\begin{proof}
Analogously to the proof of \cref{lem:HDL_tighness_key_estimate}, let us estimate each term in the martingale decomposition \eqref{eq:martingale_decomposition}.  
First, for the initial fields, we can easily see that 
\begin{equation*}
\limsup_{N\to+\infty}
\mathbb E_N\big[|\mathcal V^N_0(h_z)|^2 \big]
\le C  
\end{equation*}
for some $C>0$ with the help of the correlation estimate (\cref{lem:correlation_estimate}) and the assumption \eqref{eq:assumption_initial_meaure_energy}.  
Next, for the integral term, we have, by the Cauchy-Schwarz inequality, that 
\begin{equation*}
\begin{aligned}
&\mathbb E_N\bigg[ \sup_{0\le t \le T} 
\bigg| \int_0^t \frac{1}{\sqrt{N}} 
\sum_{x\in\mathbb T_N} \overline{\eta}_s(x)
T^-_{f_Ns}\Delta^N h_z(x/N)ds\bigg|^2 \bigg]\\
&\le T \int_0^T \frac{1}{N} \sum_{x,y\in\mathbb T_N}
\mathbb E_N[\overline\eta_t(x)\overline\eta_t(y)] 
\Big( T^-_{f_Nt}\Delta^Nh_z(x/N)\Big)
\Big(T^-_{f_Nt}\Delta^N h_z(y/N) \Big) dt . 
\end{aligned}
\end{equation*}
With the help of the correlation estimate \cref{lem:correlation_estimate}, we can easily see that the right-hand side of the last display is bounded by $CT^2z^4$ with some universal constant $C>0$.  

Finally, for the martingale term, we have that 
\begin{equation*}
\begin{aligned}
\mathbb E_N\Big[\sup_{0\le t\le T} 
| \mathcal M^N_t(h_z)|^2 \Big]
\le 4\mathbb E_N\big[| \mathcal M^N_T(h_z)|^2 \big]
= 4\mathbb E_N\big[\langle \mathcal M^N(h_z)\rangle_T \big]
\end{aligned}
\end{equation*}
where we used Doob's inequality in the first estimate. 
Now recalling from \eqref{eq:qv_volume_fluctuation} the computation of the quadratic variation we have that 
\begin{equation*}
\begin{aligned}
\mathbb E_N\big[\langle \mathcal M^N(h_z)\rangle_T \big]
= \int_0^T \frac{1}{N}\sum_{x\in\mathbb T_N} 
\mathbb E_N\Big[\big(\eta_t(x+1)-\eta_t(x))\big)^2\Big] 
\big(T^-_{f_Nt}\nabla^N h_z(x/N)\big)^2 dt.
\end{aligned}
\end{equation*}
Since the right-hand side of the last display is bounded by $CTz^2$ with some $C>0$, we end the proof. 
\end{proof}

Following the proof in \cref{subsec:HDL_tightness}, we know that now it is enough to show the tightness of the sequence of martingales $\{ \mathcal{M}^N_t(G):t\in [0,T]\}_{N\in\mathbb N}$ with the help of the Aldous criterion (\cite[Proposition 4.1.6]{kipnis1999scaling}).   
To that end, recall that the quadratic variation process is computed as \eqref{eq:qv_volume_fluctuation}. 
Now, for any stopping time $\tau\in\mathfrak{T}_T$, it holds that 
\begin{equation*}
\begin{aligned}
&\mathbb P_N \big( 
\big| \mathcal{M}^N_{\tau+\gamma}(G)-\mathcal{M}^N_\tau(G)\big| > \varepsilon \big)\\
&\quad\le \varepsilon^{-2} 
\mathbb E_N \Big[ | \mathcal{M}^N_{\tau+\gamma}(G)-\mathcal{M}^N_\tau(G)|^2 \Big]\\
&\quad\le \varepsilon^{-2} 
\mathbb E_N\bigg[
\int_\tau^{\tau+\gamma} 
\frac{1}{N}\sum_{x\in\mathbb T_N} 
\big(\eta_s(x+1)-\eta_s(x))\big)^2 \big(T^-_{f_Ns}\nabla^N G(x/N)\big)^2 
ds \bigg] .
\end{aligned}
\end{equation*}
According to the assumption \eqref{eq:assumption_initial_meaure_energy} and the last estimate, we conclude that 
\begin{equation*}
\lim_{\delta\to0} \limsup_{N\to\infty}
\sup_{\gamma \le \delta}
\sup_{\tau\in\mathcal{T}_T}
\mathbb P_N\big( | \mathcal{M}^N_{\tau + \gamma}(G) - \mathcal{M}^N_\tau(G) | > \varepsilon \big)
= 0
\end{equation*}
for any $\varepsilon > 0$, and thus the second condition of Aldous' criterion follows.
Furthermore, note that 
\begin{equation*}
\mathbb E_N\big[ \mathcal{M}^N_t(G)^2 \big]
=\mathbb E_N\big[ \langle\mathcal{M}^N(G)\rangle_t \big]
\le  2C_E \| \nabla G\|^2_{L^\infty(\mathbb T)} t,
\end{equation*}
so that the sequence $\{ \mathcal{M}^N_t(G)\}_N$ is uniformly bounded in $L^2(\mathbb P_N)$ for each $t\in [0,T]$ and thus the first condition of the Aldous' criterion is straightforward. 
Hence, the tightness of the martingale sequence is verified. 

Now that we have all the ingredients for the proof of the theorem we analyze now each bullet point. 

\subsection{Proof of item a)}

From the previous subsection we know that the sequence of mean-zero martingales $\mathcal M^N_t$ is tight with respect to the uniform topology of $D([0,T], H^{-m})$ for $m>5/2$. Moreover, this sequence of martingales is uniformly integrable as their second moments  are bounded. Therefore, there exists a sequence along which it converges to a limit $\mathcal M_t$ which again is a mean-zero martingale.

\subsection{Proof of item b)}
In this section we complement our knowledge about the limit of the martingale sequence of item a). We  compute now the limit in mean of the sequence of martingales $\{\langle \mathcal{M}^N_t(G)\rangle\}_{N\in\mathbb N}$. 
Recall \eqref{eq:qv_volume_fluctuation}. Then,   
\begin{equation*}
\begin{aligned}
&\mathbb E_N\big[\langle \mathcal{M}^N(G)\rangle_t \big] \\
&\quad= \int_0^t  
\frac{1}{N}\sum_{x\in\mathbb T_N} 
\mathbb E_N\Big[ \big(\eta_s(x+1)-\eta_s(x))\big)^2 \Big] 
\big(T^-_{f_Ns}\nabla^N G(x/N)\big)^2 ds\\
&\quad= \int_0^t \frac{2}{N}\sum_{x\in\mathbb T_N} 
\big(e^N_s(x)+e_s^N(x+1)-v^N_s(x)^2-v^N_s(x+1)^2\big)
\big(T^-_{f_Ns}\nabla^N G(x/N)\big)^2 ds\\
&\qquad- \int_0^t \frac{2}{N}\sum_{x\in\mathbb T_N} 
\varphi^N_s(x,x+1)
\big(T^-_{f_Ns}\nabla^N G(x/N)\big)^2 ds.
\end{aligned}
\end{equation*} 
Now, using \eqref{eq:convergece_profile_energy} together with \eqref{eq:volume_correlation_estimate} and the fact that the volume discrete profile $\{v_t^N(x): t\ge 0, x\in\mathbb T_N\}$ converges to the solution of the heat equation $\{v(t,u): t\ge 0 ,u\in\mathbb T\}$, the last display converges as $N\to+\infty$ to \eqref{eq:lim_qv}. 

\subsection{Proof of item c)}
Fix a time $t\in[0,T]$  and recall \eqref{eq:martingale_decomposition}. Take there the test function $G_s\coloneqq \mathcal T_{t-s}G$ where $T_{t-s}G$ is the solution of the heat equation with initial condition $\mathcal T_tG$. In this case, since the test function is dependent on the time variable, we rewrite \eqref{eq:martingale_decomposition} as 
\begin{equation}
\label{eq:martingale_decomposition_new}
\mathcal{V}^N_t(G)
= \mathcal{V}^N_0(T_tG)
+ \int_0^t \mathcal{V}^N_s(\Delta^N \mathcal T_{t-s}G+\partial_s \mathcal T_{t-s}G) ds 
+ \mathcal{M}^N_t(G) 
+ \mathcal R^N_t(G). 
\end{equation}
By the definition of $\mathcal T_{t-s} G$ the integral term in the last identity is null. We also note that   \eqref{eq:remainder} also holds in this case. 
Moreover, in Subsection \ref{sec:flu:tight} we have already seen that the sequence of measures $\{\mathcal Q^N_m\}_N$ is tight in $D([0,T]: H^{-m})$ for $m>5/2$.
Consequently, taking the limit through a subsequence  in \eqref{eq:martingale_decomposition_new}, the limit point satisfies:
\begin{equation*}
\mathcal{V}_t(G)
= \mathcal{V}_0(G)
+ \mathcal{M}_t(G) . 
\end{equation*}
Above we put the dependence on the limit martingale on the gradient of $G$ as a consequence of the computations on the previous subsection. 
Let us show now that any limit point $\mathcal M_\cdot$ of the sequence $\{\mathcal{M}^N_\cdot\}_N$ has continuous trajectories with probability one. We will denote by $\Delta^N_t$ the size of the largest jump in $H^{-m}(\mathbb T_N)$, which is given by 
\begin{equation*}
    \Delta_T^N\coloneqq\sup_{0\leq t\leq T}\|\mathcal M^N_t-\mathcal M^N_{t-}\|_{-m}.
\end{equation*} 
By the martingale decomposition \eqref{eq:martingale_decomposition}, we have that the jumps of $\mathcal M^N_t$ are the ones of the field $\mathcal V^N_t$. Thus, 
\begin{equation*}
    \Delta^N_T=\sup_{0\leq t\leq T}\sum_{z\in\mathbb Z}\gamma_z^{-m}
    \big( \mathcal V^N_t(h_z)-\mathcal V^N_{t-}(h_z) \big)^2.
\end{equation*}
Let $x_t\in\mathbb T_N$ denote the point at which there was an update of the exchange dynamics at the time $t\in [0,T]$. By the definition of the volume fluctuation field, we get that 
\begin{equation*}
    \mathcal V^N_t(h_z)-\mathcal V^N_{t-}(h_z)=\frac{1}{N^{3/2}}\big[\eta_t(x_t+1)-\eta_t(x_t)+v^N_t(x_t)-v^N_t(x_t+1)\big]\nabla^Nh_z(x_t/N), 
\end{equation*}
which implies that
\begin{equation}
\label{eq:cont.martingale}
\begin{split}
     \mathbb E_N\big[\Delta_T^N\big]
     &\le \frac{2}{N^{3}}\mathbb E_N\bigg[\sup_{0\leq t\le T}\sum_{z\in\mathbb Z}\gamma_z^{-m}\big( \eta_t(x_t+1)-\eta_t(x_t)\big)^2\big(\nabla^Nh_z(x_t/N)\big)^2\bigg]\\
     &\quad+\frac{2}{N^{3}} \mathbb E_N\bigg[ \sup_{0\leq t\leq T}\sum_{z\in\mathbb Z}\gamma_z^{-m} \big( v^N_t(x_t)-v^N_t(x_t+1)\big)^2\big(\nabla^Nh_z(x_t/N)\big)^2\bigg] . 
\end{split}
\end{equation}
For the first term, note that by the conservation of the energy
\begin{equation*}
    (\eta_t(x_t+1)-\eta_t(x_t))^2\leq\sum_{x\in\mathbb T_N}(\eta_t(x+1)-\eta_t(x))^2 \leq2\sum_{x\in\mathbb T_N}\eta_t(x)^2
    =2\sum_{x\in\mathbb T_N}\eta_0(x)^2. 
\end{equation*}
Moreover, note that the derivative of the basis $h_z$ gives a contribution of order $z$, so that the first line in \eqref{eq:cont.martingale} is bounded by 
\begin{equation*}
    \frac{C}{N^2} \|e^N_0\|^2_{\ell^\infty(\mathbb T_N)}    \sum_{z\in\mathbb Z}z^2 \gamma_z^{-m}
\end{equation*} 
for some universal constant $C>0$. 
Noting that the sum over $z\in\mathbb Z$ remains finite as long as $m>3/2$, the last display vanishes as $N\to+\infty$. 
Additionally, using the fact that $v_t^N$ is the solution of the discrete heat equation, we can easily check also that the second line in \eqref{eq:cont.martingale} vanishes, provided $m>3/2$. 
Therefore, we conclude that for any $\varepsilon>0$, 
$$
\limsup_{N\to\infty}\mathbb P_N(\Delta_T^N>\varepsilon)=0.
$$
This immediately shows that the all limit points are concentrated on continuous trajectories. 

Finally, to prove that $\mathcal V_0$ and $\mathcal M_t$ are uncorrelated, fix  $f,g\in\mathcal D(\mathbb T)$  and note that by the martingale property  we conclude that  for any $t\in[0,T]$, 
\begin{equation*}
E_{\mathcal Q_m} \big[ \mathcal  V_0(g)\mathcal M_t(f)\big]
= E_{\mathcal Q_m}\big[ E_{\mathcal Q_m}\big[ \mathcal  V_0(g)\mathcal M_t(f)\big| \mathcal F_0\big] \big]
=E_{\mathcal Q_m}\big[ \mathcal V_0(g) E_{\mathcal Q_m}\big[ \mathcal M_t(f)|\mathcal F_0\big]\big] =0
\end{equation*}
where $\mathcal F_0$ denotes the $\sigma$-fields generated by $\{\eta_0(x)\}_{x\in\mathbb T_N}$ and $E_{\mathcal Q_m}$ denotes the expectation with respect to the measure $\mathcal Q_m$. 

\subsection{Proof of item d)}
Assume now that $\mu_N$ is the invariant measure $\mu_{\rho,\beta}$ for some $\rho\in\mathbb R$ and $\beta>0$. In this case, we can really characterize the law of the martingale $\mathcal M_t$ since we can prove the next result.
Indeed, first note that the field $\mathcal V_t^N$ converges, through a subsequence, as it is tight, for any time $t$ to a space white noise with variance $1/\beta$. This is a consequence of the fact that the dynamics is starting from the product, spatially homogeneous invariant measure $\mu_{\rho,\beta}$ and the proof goes through the Fourier transform. We leave the details to the reader as they are classical. 
Moreover, note that 
\begin{equation*}
\begin{aligned}
&\mathbb E_N\Big[\Big(\langle \mathcal{M}^N(G)\rangle_t -\mathbb E_N[\langle \mathcal{M}^N(G)\rangle_t]\Big)^2\Big]\\ 
&= \mathbb E_N\Big[\Big(\int_0^t  
\frac{1}{N}\sum_{x\in\mathbb T_N} 
\Big[ \big(\eta_s(x+1)-\eta_s(x)\big)^2-\frac{2}{\beta} \Big] 
\big(T^-_{f_Ns}\nabla^N G(x/N)\big)^2 ds\Big)^2\Big]
\end{aligned}
\end{equation*} 
To conclude now it is enough to use the Cauchy-Schwarz inequality and since the variables are centered and the measure is product, the last display can be bounded from above by
\begin{equation*}
\begin{aligned}
&\mathbb E_N\Big[\Big(\langle \mathcal{M}^N(G)\rangle_t -\mathbb E_N[\langle \mathcal{M}^N(G)\rangle_t]\Big)^2\Big]\\ 
&= t\int_0^t  
\frac{1}{N^2}\sum_{x\in\mathbb T_N} 
\mathbb E_N \Big[\Big(\big(\eta_s(x+1)-\eta_s(x)\big)^2-\frac{2}{\beta} \Big)^2\Big] 
\big(T^-_{f_Ns}\nabla^N G(x/N)\big)^4 ds.
\end{aligned}
\end{equation*} 
From Assumption \ref{assump:initial_measure}
last display vanishes as $N\to+\infty.$ Now, from the previous items and a criteria listed in \cite[Proposition 4.4]{gonccalves2024clt} for instance, we conclude that the quadratic variation of the limit martingale $\mathcal M_t$ is equal to 
\begin{equation*}
\langle \mathcal M(f)\rangle_t= \int_0^t \int_{\mathbb T} \frac{2} {\beta} (\nabla f(u))^2 du ds
= 2t\beta^{-1}\| \nabla f\|^2_{L^2(\mathbb T)}, 
\end{equation*}
and thus the proof follows.

\section{Proof of the fourth-moment bound}
\label{sec:moment bound}
In this section, we present a uniform bound on the fourth moment to show \cref{thm:fourth-moment_bound}. 
To that end, we recall some basic notion of $\mathcal H^{-1}=\mathcal H^{-1,N}$-norm from \cite[Section 5.6]{kipnis1999scaling}. 
First, recall here that the $\mathcal H^{-1,N}$-norm $\| \cdot \|_{-1,N}$ is given by
\begin{equation}
\label{eq:h-1norm_expression}
\|f\|^2_{-1,N}
= \frac{1}{N}\sum_{x,y\in\mathbb T_N}
f(x)K_N(x-y)f(y)^*
\end{equation}
where 
\begin{equation*}
K_N(x)
= \frac{1}{N}\sum_{z\in\mathbb T_N} \frac{\psi_z(x)}{a_N(z)}
\end{equation*}
with 
\begin{equation*}
a_N(z) 
= 1 + 2N^2\big(1-\cos(2\pi z/N)\big) 
\end{equation*}
and $\psi_z(x)=\exp(2\pi i z x/N)$ which consists of the orthonormal basis of $\ell^2(\mathbb T_N)$. 
Note that the kernel $K_N$ is an even real function using the fact that $z\mapsto 1/(1 + 2N^2 (1-\cos(2\pi z/N) )$ is even real and the definition of $\psi_z(x)$.   
Observe that following identity  holds
\begin{equation}
\label{eq:H-1_kernel_property}
(I - \Delta^N)K_N(x) = \delta_{0,x}
\end{equation}
where recall that $\Delta^N=\Delta_N^{(1)}$ is the one-dimensional discrete Laplacian and $\delta_{0,x}$ stands for the Kronecker delta. 
In what follows, we will use the following properties several times.

\begin{lemma}
We have the following properties: 
\begin{itemize}
\item[a)]
For any two real functions $f,g\in \ell^2(\mathbb{T}_N)$, and for all $A>0$, we have that 
\begin{equation}
\label{eq:H-1_kernel_young}
|\langle f,K_N*g\rangle_{\ell^2(\mathbb T_N)}|\leq \frac{A}{2}\|f\|^2_{-1,N}+\frac{1}{2A}\|g\|^2_{-1,N}.
\end{equation}
\item [b)] We have that 
\begin{equation}
\label{eq:H-1_kernel_estimate}
N^2 \big(K_N(0) - K_N(1) \big) \le 1/2. 
\end{equation}

\item[c)]
For any $x>0$, we have that 
\begin{equation}
\label{eq:H-1_kernel_identity_difference}
K_N(x)-K_N(x+1)=\frac{1}{N^2}\Big\{\frac{1-K_N(0)}{2}-\sum_{j=1}^{x}K_N(j)\Big\}.
\end{equation}
Note that the case $x<0$ can be covered using the fact that the kernel $K_N$ is even real.
\end{itemize}
\end{lemma}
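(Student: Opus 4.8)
The plan is to derive all three items from the Fourier representation $K_N(x)=\frac1N\sum_{z\in\mathbb T_N}\psi_z(x)/a_N(z)$ together with the resolvent identity \eqref{eq:H-1_kernel_property}. First I would record two consequences of the definition that get used repeatedly: since $a_N(z)=1+2N^2(1-\cos(2\pi z/N))$ is a strictly positive, even, real function of $z$, the kernel $K_N$ is real and even (as already noted in the text), and moreover $0<K_N(0)<1$, because in $K_N(0)=\frac1N\sum_z 1/a_N(z)$ the term $z=0$ equals $1/N$ while each of the remaining $N-1$ terms is strictly positive and strictly smaller than $1/N$.

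For item a), the key observation is that the bilinear form with kernel $K_N$ is a genuine inner product. By the definition \eqref{eq:h-1norm_expression} and the convention for the convolution, the pairing $\langle f,K_N*g\rangle_{\ell^2(\mathbb T_N)}$ is exactly the polarization $\langle f,g\rangle_{-1,N}$ of the quadratic form $\|\cdot\|_{-1,N}^2$; on the Fourier side this form has symbol $1/a_N(\cdot)>0$, hence is Hermitian and positive definite. I would then apply the Cauchy-Schwarz inequality for this inner product to get $|\langle f,K_N*g\rangle_{\ell^2(\mathbb T_N)}|\le\|f\|_{-1,N}\|g\|_{-1,N}$, and conclude with the elementary Young inequality $ab\le\frac A2 a^2+\frac1{2A}b^2$ applied to $a=\|f\|_{-1,N}$ and $b=\|g\|_{-1,N}$, which is precisely \eqref{eq:H-1_kernel_young}.

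Items b) and c) both come from reading the identity $(I-\Delta^N)K_N(y)=\delta_{0,y}$ at different points. Evaluating it at $y=0$ and using $K_N(1)=K_N(-1)$ gives $K_N(0)+2N^2(K_N(0)-K_N(1))=1$, i.e.
\begin{equation*}
N^2\big(K_N(0)-K_N(1)\big)=\frac{1-K_N(0)}{2},
\end{equation*}
which is $\le 1/2$ by the preliminary bound $K_N(0)>0$, proving b). For c) I would set $D(y):=K_N(y)-K_N(y+1)$; evaluating the identity at a generic $y\neq 0$ and rewriting the discrete second difference $K_N(y+1)+K_N(y-1)-2K_N(y)=D(y-1)-D(y)$ gives the recursion $D(y-1)-D(y)=K_N(y)/N^2$. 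Telescoping this from $y=1$ to $y=x$ yields $D(x)=D(0)-\frac1{N^2}\sum_{j=1}^{x}K_N(j)$, and substituting $D(0)=K_N(0)-K_N(1)=(1-K_N(0))/(2N^2)$ from the computation in b) produces exactly \eqref{eq:H-1_kernel_identity_difference}; the case $x<0$ follows from this by the evenness of $K_N$.

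There is no serious obstacle in this lemma; the only point deserving a little care is item a), where one must check that $\langle f,K_N*g\rangle_{\ell^2(\mathbb T_N)}$ is literally the $(-1,N)$ inner product of $f$ and $g$, keeping track of the various $1/N$ normalizations in the definitions of $K_N$, of the convolution, and of the $\ell^2(\mathbb T_N)$ pairing. Once the Fourier picture is in place this identification, and hence the applicability of Cauchy-Schwarz, is immediate.
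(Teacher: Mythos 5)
Your proposal is correct. For items a) and b) you follow essentially the same route as the paper: b) is the identical computation (evaluate $(I-\Delta^N)K_N$ at $0$, use evenness and $K_N(0)\ge 0$), and for a) you actually spell out the Cauchy--Schwarz-plus-Young argument in the $\langle\cdot,\cdot\rangle_{-1,N}$ inner product that the paper simply delegates to \cite[Section 5.6]{kipnis1999scaling}; your caveat about matching the $1/N$ normalizations is the right thing to check and causes no problem. The only genuine divergence is in c): the paper computes $N^2(K_N(x)-K_N(x+1))$ directly on the Fourier side, using the geometric-sum identity $1-\psi_z(x+1)=(1-\psi_z(1))\sum_{j=0}^x\psi_z(j)$ to produce $\sum_{j=0}^x\Delta^N K_N(j)$ before invoking the resolvent identity, whereas you read the resolvent identity pointwise at $y\neq 0$ as the first-order recursion $D(y-1)-D(y)=K_N(y)/N^2$ for the increments $D(y)=K_N(y)-K_N(y+1)$ and telescope, feeding in $D(0)$ from b). Both arguments rest on exactly the same two inputs --- the identity $(I-\Delta^N)K_N=\delta_{0,\cdot}$ and the evenness of $K_N$ --- and yours is, if anything, slightly more elementary since it bypasses the Fourier manipulation. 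Your extra observation that $0<K_N(0)<1$ is correct, though only the nonnegativity of $K_N(0)$ is actually needed for b).
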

\begin{proof}
For the proof of the first item, we are refereed to \cite[Section 5.6]{kipnis1999scaling}. 
To prove the second item, it is enough to note that from \eqref{eq:H-1_kernel_property} applied to $x=0$, and by using the fact that the kernel satisfies $K_N(1)=K_N(-1)$ and that $K_N(0)$ is non-negative, then we get the identity 
$$1=K_N(0)-\Delta^N K_N(0),$$ 
which is equivalent to 
$$
2N^2(K_N(0)-K_N(1))=1-K_N(0) $$ 
and this gives the inequality.  
Now we are in a position to prove the last identity c).
First, note that
\begin{equation*}
\begin{aligned}
N^2( K_N(x)-K_N(x+1))
&=N\sum_z \frac {1}{a_N(z)}\psi_z(x)(1-\exp(2\pi i z/N))\\
&=\frac{K_N(0)-1}{2}
+\sum_z \frac{N}{a_N(z)}(1-\psi_{z}(x+1))(1-\exp(-2\pi i z/N)). 
\end{aligned}
\end{equation*}
where we summed and subtracted a term $N^2(K_N(0)-K_N(-1))$, and used the item b) in the last identity. 
Now,  we use the fact that 
\begin{equation*}\begin{split}
(1-\psi_{z}(x+1))=(1-\psi_z(1))\sum_{j=0}^x\psi_z(j), 
\end{split}
\end{equation*}
which yields
\begin{equation*}\begin{split}
N^2(K_N(x)-K_N(x+1))&=\frac{K_N(0)-1}{2}-\sum_{j=0}^x\Delta^NK_N(j).
\end{split}
\end{equation*}
Now if we use the fact that $-\Delta^NK_N(0)=1-K_N(0)$ together with \eqref{eq:H-1_kernel_property} and we are done. 
\end{proof}

Now we head up to the proof of \cref{thm:fourth-moment_bound}.
By the explicit formula for the $\mathcal H^{-1}$-norm given in \eqref{eq:h-1norm_expression} we have that 
\begin{align*}
L_N \|\eta^2\|^2_{-1,N}=\frac{1}{N}\sum_{x,y\in\mathbb T_N}K_N(x-y) L_N\big(\eta(x)^2\eta(y)^2\big) .
\end{align*}
We begin by computing the contribution of the symmetric part:
\begin{align*}
& \frac{1}{N}\sum_{x,y\in\mathbb T_N} K_N(x-y)N^2
S\big( \eta(x)^2\eta(y)^2\big) \\
&\quad=\frac{1}{N}\sum_{|x-y|>1} \big\{
\eta(y)^2 \Delta^N\eta(x)^2
+\eta(x)^2 \Delta^N\eta(y)^2 \big\} K_N(x-y)\\
&\qquad+\frac{N^2}{N}\sum_{x\in\mathbb{T}_N}\big\{ S\big( \eta(x)^2 \eta(x+1)^2\big) 
K_N(1)
+S\big(\eta(x-1)^2 \eta(x)^2\big) K_N(-1)
+S\eta(x)^4 K_N(0) \big\}. 
\end{align*}
Since $K_N$ is symmetric, by adding and subtracting appropriate terms, the last display becomes
\begin{equation}
\label{eq:symmetric_part_eta^2}
\frac{2}{N} \sum_{x,y\in\mathbb{T}_N}\eta(x)^2 \eta(y)^2 \Delta_N K_N(x-y)
+2N\sum_{x\in\mathbb{T}_N}\big( \eta(x+1)^2-\eta(x)^2\big)^2 \big(K_N(0)-K_N(1)\big) .
\end{equation}
The leftmost term in the last display can be rewritten as 
\begin{equation*}
\begin{aligned}
&-\frac{2}{N}\sum_{x,y\in\mathbb{T}_N}
\eta(x)^2 \eta(y)^2 (I-\Delta_N)K_N(x-y)
+\frac{2}{N}\sum_{x,y\in\mathbb{T}_N}\eta(x)^2 \eta(y)^2 K_N(x-y) \\
&\quad=-\frac{2}{N}\sum_{x\in\mathbb{T}_N}\eta(x)^4 +2\|\eta^2\|^2_{-1,N}
\end{aligned}
\end{equation*}
where we used the identity \eqref{eq:H-1_kernel_property}. 
On the other hand, note that the rightmost term in \eqref{eq:symmetric_part_eta^2} can be rewritten, by using  \eqref{eq:H-1_kernel_identity_difference} with $x=0$, as
\begin{equation*}
\begin{aligned}
\frac{1}{N}\sum_{x\in\mathbb T_N} \big( \eta(x+1)^2-\eta(x)^2\big)^2 
\big( 1- K_N(0)\big). 
\end{aligned}
\end{equation*}
By the elementary identity $(a^2 - b^2)^2 \le a^4+b^4$, the last display is bounded by
\begin{equation*}
\frac{2(1-K_N(0))}{N}\sum_{x\in\mathbb T_N} \eta(x)^4. 
\end{equation*}
Hence, up to here we have that
\begin{equation}\label{eq:symmetric_part_eta^2_4th_moment_bd}
S_N \| \eta_t^2\|^2_{-1,N} 
\le -\frac{2K_N(0)}{N} \sum_{x\in\mathbb T_N} \eta_t(x)^4 
+ 2 \| \eta_t^2\|^2_{-1,N} .
\end{equation}
Now we turn to the contribution of the Hamiltonian part: 
\begin{equation*}
A(\eta(x)^2\eta(y)^2)
=2\big( \eta(x+1)-\eta(x-1)\big) \eta(y)^2 \eta(x) 
+2\big( \eta(y+1)-\eta(y-1)\big) \eta(x)^2 \eta(y) ,
\end{equation*}
and thus 
\begin{equation*}
\begin{aligned}
\alpha_N N^2 A \|\eta_t\|^2_{-1,N} 
&= \alpha_NN \sum_{x,y\in\mathbb T_N}K_N(x-y)
A\big( \eta(x)^2\eta(y)^2\big) \\
&= 4\alpha_NN \sum_{x,y\in\mathbb T_N}\big( \eta(x+1)-\eta(x-1)\big)  
\eta(y)^2 \eta(x) K_N(x-y) \\
&= 4\alpha_NN \sum_{x,y\in\mathbb T_N}\eta(x+1)\eta(x) \eta(y)^2 
\big( K_N(x-y) - K_N(x-y+1) \big)  
\end{aligned}
\end{equation*}
where in the second identity we used the symmetry of the kernel $K_N$. 
From \eqref{eq:H-1_kernel_identity_difference}, we can write the last term in last display as 
\begin{equation}
\label{eq:estimating_hamiltonian_eta2} 
\frac{2\alpha_N(1-K_N(0))}{N}\sum_{x,y\in\mathbb{T}_N}\eta(x)\eta(x+1)\eta(y)^2
-8\alpha_N N\sum_{\substack{x,y\in\mathbb{T}_N,\\ x>y}} \eta(x)\eta(x+1)\eta(y)^2 \sum_{j=1}^{x-y}K_N(j)
\end{equation}
where for the second sum in the last display we used again the fact that the kernel $K_N$ is symmetric. 
By the elementary inequality $2ab \le a^2 + b^2$ and the fact that we are summing over the torus, the first term in the last display is bounded by 
\begin{equation*}
2\big( 1-K_N(0) \big) \alpha_N N\Big(\frac{1}{N}\sum_{x\in\mathbb{T}_N}\eta(x)^2 \Big)^2.
\end{equation*}

On the other hand, by Young's inequality, the absolute value of the second sum in \eqref{eq:estimating_hamiltonian_eta2} is bounded above by 
\begin{equation*}
8\frac{\alpha_N}{N}\sum_{\substack{x,y\in\mathbb{T}_N, \\ x>y} }\eta(x)^2 \eta(y)^2 \Big\vert\sum_{j=1}^{x-y}K_N(j)\Big\vert
+8 \frac{\alpha_N}{N} \sum_{\substack{x,y\in\mathbb{T}_N, \\ x>y}} \eta(x+1)^2\eta(y)^2 \Big\vert\sum_{j=1}^{x-y}K_N(j)\Big\vert.
\end{equation*}
Now we estimate the letfmost term in the last display, but the rightmost can be treated analogously.
Recalling the definition of $K_N$,  we get that
\begin{equation*}
\bigg| \sum_{j=1}^{x-y}K_N(j) \bigg|
=\bigg| \frac{1}{N}\sum_{z\in\mathbb{T}_N}\frac{1}{a_N(z)} \sum_{j=1}^{x-y} e^{\frac{2\pi izj}{N}} \bigg| 
\le \frac{x-y}{N}  
\end{equation*}
for any $x,y$ such that $x>y$ where we used the fact that the sum $\sum_z 1/a_N(z)$ is bounded by a finite constant uniformly in $N$. 
Hence, the first term in the penultimate display, and thus the second sum of \eqref{eq:estimating_hamiltonian_eta2} is bounded by 
\begin{equation*}
\frac{16\alpha_N}{N}
\sum_{\substack{x,y\in\mathbb T_N, \\ x>y}}\eta(x)^2 \eta(y)^2 \frac{x-y}{N} 
\le 16\alpha_N N
\Big( \frac{1}{N}\sum_{x\in\mathbb T_N} \eta(x)^2 \Big)^2. 
\end{equation*}
Therefore, we conclude that there exists a constant $C>0$ such that 
\begin{equation*}
\begin{aligned}
L_N\|\eta^2\|^2_{-1,N}
&\le -2\frac{K_N(0)}{N} \sum_{x\in\mathbb{T}_N}\eta(x)^4 
+2\|\eta^2\|^2_{-1,N}
+C\alpha_N N\Big(\frac{1}{N}\sum_{x\in\mathbb{T}_N}\eta(x)^2 \Big)^2.
\end{aligned}
\end{equation*}
From Dynkin's formula, 
\begin{equation*}
\| \eta_t^2\|^2_{-1,N} 
- \| \eta_0^2 \|^2_{-1,N} 
- \int_0^t L_N \| \eta_s^2\|^2_{-1,N} ds 
\end{equation*}
is a mean-zero martingale. 
Taking the expectation in the last display and using the bound of the penultimate display, we have that 
\begin{equation*}
\begin{aligned}
\mathbb E_N\big[ \| \eta_t^2\|^2_{-1,N}\big]
&\le \mathbb E_N\big[ \| \eta_0^2\|^2_{-1,N}\big]
- \frac{2K_N(0)}{N} \int_0^t  \sum_{x\in\mathbb T_N} \mathbb E_N[\eta_s(x)^4] ds \\
&\quad+ \int_0^t \mathbb E_N\big[ \| \eta_s^2\|^2_{-1,N}\big]ds 
+ C\alpha_N N 
\end{aligned}
\end{equation*}
with some $C=C(T,\|e_0^N\|_{\ell^\infty(\mathbb T_N)})>0$.
Now, the proof ends as a consequence of Gronwall's inequality.

\section{Bounds on the two-point correlation function}
\label{sec:two-point_correlation_estimate}
Here let us give a proof of \cref{lem:correlation_estimate} on the two-point correlations (recall \eqref{eq:correlation_volume_definition} for the definition).  
From \cref{app:computation_generator} (see \cref{lem:action_correlation}), we know that 
\begin{equation*}
\begin{split}
\partial_t \varphi^N_t(x,y)
=\mathscr L_N^{(2)} 
\varphi_t^N(x,y)
+ \mathfrak g^N_t(x,y) 
\end{split}
\end{equation*} 
for any $(x,y)\in V_N^{(2)}$ where 
$
\mathfrak g^N_t(x,y)
= g^N_t(x)\mathbf{1}_{y=x+1}
+ g^N_t(x-1)\mathbf{1}_{y=x-1}
$
with
\begin{equation}
\label{eq:def of g}
\begin{aligned}
g^N_t(x) 
= \alpha_NN^2\big( e^N_t(x+1) -  v^N_t(x+1)^2 
- e^N_t(x) +  v^N_t(x)^2 \big) 
- (\nabla^N v_t^N(x))^2 ,
\end{aligned}
\end{equation}
and $\mathscr L_N^{(2)}$ denotes the operator that acts on functions $\varphi$ as 
\begin{equation}
\label{eq:2d_RW_operator_definition}
\begin{split}
\mathscr L_N^{(2)}\varphi(x,y)
=
&\big\{N^2(1+\alpha_N)\big(\varphi(x+1,y)+\varphi(x,y+1)-2 \varphi(x,y)\big)\\
&\quad + N^2(1-\alpha_N)\big(\varphi(x-1,y)+\varphi(x,y-1)-2 \varphi(x,y)\big) \big\} \textbf{1}_{|y-x|>1} \\
&+\big\{ N^2(1+\alpha_N)\big(\varphi(x,x+2)- \varphi(x,x+1)\big) \\
&\quad + N^2(1-\alpha_N)\big(\varphi(x-1,x+1)- \varphi(x,x+1)\big)\big\}\textbf{1}_{y=x+1}\\ 
&+\big\{ N^2(1+\alpha_N)\big(\varphi(x+1,x-1)- \varphi(x,x-1)\big) \\
&\quad+ N^2(1-\alpha_N)\big(\varphi(x,x-2)-\varphi(x,x-1)\big)\big\}\textbf{1}_{y=x-1}.
\end{split}
\end{equation} 
From Duhamel's principle, $\varphi^N_t$ admits the following probabilistic representation: 
\begin{equation}
\label{eq:duhamel_principle}
\varphi^N_t(x,y)
= \mathbf{E}_{(x,y)}\bigg[
\varphi^N_0(X^{(2)}_t) 
+ \int_0^t \mathfrak g^N_{t-s}(X^{(2)}_s) ds
\bigg] 
\end{equation}
where $\{ X^{(2)}_t:t\ge0 \}$ is the random walk on $V^{(2)}_N$ with generator $\mathscr L^{(2)}_N$, and this is indeed justified from the fact that the operator $\mathscr L^{(2)}_N$ is a bounded operator. We denote by $\mathbf P_v$ the corresponding probability measure associated to this random walk starting from a given position $v \in V^{(2)}_N$ and by $\mathbf E_v$ the expectation with respect to $\mathbf P_v$. 
Note that the random walk $\{ X^{(2)}_t:t\ge0 \}$ jumps up/right (resp. left/down) with rate $N^2(1+\alpha_N)$ (resp.  $N^2(1-\alpha_N)$), except  at the diagonal lines
$$
\mathcal D_\pm^{(2)} \coloneqq 
\{ (x,x\pm 1) ; x\in \mathbb T_N \} , 
$$ 
that is, e.g.  on the line $\mathcal D_+^{(2)}$ the random walk only jumps left (resp. right) with rate $N^2(1-\alpha_N)$ (resp.  $N^2(1+\alpha_N)$). Analogously, the random walk is also reflected on the line $\mathcal D_-^{(2)}$.  

From this it follows that
\begin{equation*}
\begin{split}
\varphi^N_t(x,y)
&= \mathbf{E}_{(x,y)}\big[\varphi^N_0(X^{(2)}_t)\big]
+ \int_0^t \sum_{(x',y')\in \mathbb T_N^2} 
\mathfrak g^N_{t-s}(x',y')  
\mathbf{P}_{(x,y)}(X^{(2)}_s=(x',y')) ds . 
\\&= \mathbf{E}_{(x,y)}\big[\varphi^N_0(X^{(2)}_t)\big] + \int_0^t \sum_{x'\in \mathbb T_N} g^N_{t-s}(x') \mathbf{P}_{(x,y)}(X^{(2)}_s=(x',x'+1)) ds  \\
&\quad+ \int_0^t \sum_{x'\in \mathbb T_N}  g^N_{t-s}(x'-1) \mathbf{P}_{(x,y)}(X^{(2)}_s=(x',x'-1)) ds . 
\end{split}
\end{equation*} 
Hence, we have the bound 
\begin{equation*}
\begin{aligned}
\| \varphi^N_t\|_{\ell^\infty(V_N^{(2)})}
\le \| \varphi^N_0\|_{\ell^\infty(V_N^{(2)})}
+ \sup_{0\le s \le t}\| g^N_s\|_{\ell^\infty(\mathbb T_N)} 
\max_{(x,y)\in V_N^{(2)}} 
T^N_t(x,y) 
\end{aligned}
\end{equation*}
where 
\begin{equation*}
T^N_t(x,y)
= \int_0^t \mathbf{P}_{(x,y)}(X^{(2)}_s \in \partial V_N^{(2)}) ds
\end{equation*}
is the total time that the random walk stays at the  boundary set $\partial V_N^{(2)}\coloneqq\mathcal D_+^{(2)} \cup \mathcal D_-^{(2)}$ during the time interval $[0,t]$ starting from $(x,y)$.
Now, we have the following two estimates.

\begin{lemma}
\label{lem:two-point_local_time_estimate}
There exists a constant $C=C(T)>0$ such that  
\begin{equation*}
\sup_{0\le t\le T}
\max_{(x,y)\in V_N^{(2)}} |T_t^N(x,y)| \le C/N.
\end{equation*}
\end{lemma}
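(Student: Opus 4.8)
The plan is to reduce this two-dimensional occupation-time bound to a one-dimensional heat-kernel estimate, via the projection already listed among the random walks. The key observation is that the gap coordinate $d_t\coloneqq(y_t-x_t)\bmod N$ of the walk $X^{(2)}_t=(x_t,y_t)$ is itself a Markov process, and in fact a \emph{symmetric} one: reading off the rates from \eqref{eq:2d_RW_operator_definition}, in the interior the four nearest-neighbour moves of $(x,y)$ have rates $N^2(1\pm\alpha_N)$, and each of the two transitions $d\mapsto d+1$ and $d\mapsto d-1$ receives a contribution $N^2(1+\alpha_N)+N^2(1-\alpha_N)=2N^2$, so that the weak asymmetry $\alpha_N$ cancels exactly; on the diagonal lines $\mathcal D_\pm^{(2)}$ the only surviving moves push $d$ back into the interior, i.e. $d$ is reflected off its two extreme values. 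After relabelling this is precisely the projected walk $\{\widetilde X^{(1)}_t\}$ on $\Lambda_{N-1}$ generated by $\mathfrak L^{(1)}_N$, and the boundary set $\partial V^{(2)}_N=\mathcal D_+^{(2)}\cup\mathcal D_-^{(2)}$ corresponds to the two extreme points $b,b'$ of $\Lambda_{N-1}$. Denoting by $i_0\in\Lambda_{N-1}$ the point corresponding to $(x,y)$, we thus have
\[
T^N_t(x,y)
= \int_0^t \mathbf P_{i_0}\big(\widetilde X^{(1)}_s\in\{b,b'\}\big)\,ds
\le 2\int_0^t \sup_{i,j\in\Lambda_{N-1}} \mathbf P_{i}\big(\widetilde X^{(1)}_s=j\big)\,ds ,
\]
so the claim reduces to a uniform heat-kernel bound for $\widetilde X^{(1)}$.

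For that bound I would use that $\widetilde X^{(1)}$ is reversible with respect to the uniform measure on $\Lambda_{N-1}$ and is accelerated by $N^2$. Expanding in the eigenfunctions of $\mathfrak L^{(1)}_N$, whose non-zero eigenvalues satisfy $\lambda_k\ge ck^2$ for $1\le k\le N-2$ and whose eigenfunctions are uniformly bounded, one obtains
\[
\sup_{i,j}\Big|\mathbf P_{i}\big(\widetilde X^{(1)}_s=j\big)-\frac{1}{|\Lambda_{N-1}|}\Big|
\le \frac{C}{N}\sum_{k\ge1}e^{-ck^2s}
\le \frac{C}{N}\big(1\vee s^{-1/2}\big),
\]
uniformly in $N$ and $s>0$; alternatively, one may invoke the reflection principle to represent $\widetilde X^{(1)}$ as the image of the symmetric random walk on a discrete torus of comparable size and transfer the transition-probability bounds of \cref{lem:SRW_transition_prob_estimate}. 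Either way $\sup_{i,j}\mathbf P_i(\widetilde X^{(1)}_s=j)\le\frac{C}{N}(1\vee s^{-1/2})$, and therefore
\[
\sup_{0\le t\le T}\max_{(x,y)\in V^{(2)}_N}T^N_t(x,y)
\le \frac{2C}{N}\int_0^T\big(1\vee s^{-1/2}\big)\,ds
\le \frac{C(T)}{N},
\]
which is the assertion.

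I expect the only genuinely delicate point to be the heat-kernel bound for $\widetilde X^{(1)}$ \emph{near the reflecting endpoints}, where the jump rate is modified because one neighbour is missing; one must check that the Gaussian/spectral estimates are not spoiled there. The cleanest route is the reflection-principle representation above, which reduces the matter to the transition-probability bounds for the symmetric random walk on a discrete torus at the cost of a harmless factor $2$. Everything else — the cancellation of $\alpha_N$ in the gap coordinate, the identification of $\partial V^{(2)}_N$ with the endpoints, and the elementary estimate $\int_0^T(1\vee s^{-1/2})\,ds\le 2+T$ — is routine.
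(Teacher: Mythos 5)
Your proof is correct, and its first half coincides exactly with the paper's: the paper also passes to the gap coordinate $r\equiv y-x \pmod N$, verifies that the $\alpha_N$-asymmetry cancels in the interior and that the diagonal lines become reflecting endpoints, and identifies the resulting process with the projected walk $\{\widetilde X^{(1)}_t\}$ on $\Lambda_{N-1}$ generated by $\mathfrak L^{(1)}_N$, so that $T^N_t(x,y)=\int_0^t\big(p_s(r,1)+p_s(r,N-1)\big)\,ds$. Where you genuinely diverge is in the final estimate. The paper uses no heat-kernel or spectral input at all: it applies Dynkin's formula to the quadratic Lyapunov function $g(x)=-(x-x_0)^2$ with $x_0=N/2+3/2$, for which $\mathfrak L^{(1)}_N g=-2N^2$ in the bulk while it is of order $+N^3$ at the two endpoints; since $g$ has oscillation $O(N^2)$ on $\Lambda_{N-1}$, the occupation time of the endpoints weighted by $N^3$ must be $O(N^2)$, i.e.\ the occupation time itself is $O(1/N)$. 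Your route instead bounds $\mathbf P_{i_0}\big(\widetilde X^{(1)}_s\in\{1,N-1\}\big)\le 2\sup_{i,j}p_s(i,j)\le \tfrac{C}{N}(1\vee s^{-1/2})$ and integrates in $s$; this is also valid (the reflected generator is reversible for the uniform measure, its nonzero eigenvalues satisfy $\lambda_k\ge ck^2$ uniformly in $N$, its cosine eigenfunctions are uniformly bounded, and alternatively one can unfold by reflection onto a torus of size $2(N-1)$), and $\int_0^T(1\vee s^{-1/2})\,ds<\infty$. The paper's argument buys elementarity --- a two-line generator computation in place of a spectral decomposition --- while yours buys strictly more information, namely a pointwise-in-$s$ decay of the occupation density rather than only its time integral. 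One small inaccuracy to fix: \cref{lem:SRW_transition_prob_estimate} bounds $\Delta^{(1)}_NP$ and $\nabla^NP$, not $P$ itself, so it cannot be ``transferred'' as stated; the on-diagonal bound $\sup_{x,y}P_x(Y^{(1)}_s=y)\le C\big(1\wedge (N\sqrt s)^{-1}\big)$ that you actually need follows from the same Fourier computation as in that lemma but is not literally contained in it.
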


\begin{lemma}
\label{lem:two-point_duhamel_reminder_estimate}
Assume \eqref{eq:assumption_initial_measure_volume} and \eqref{eq:assumption_initial_meaure_energy}.
Then, there exists another constant $C=C(T, v_0,e_0)>0$ such that 
\begin{equation}
\label{eq:correlation_estimate_key_lemma}
\begin{aligned}
\sup_{0\le t\le T} \| g^N_t\|_{\ell^\infty(\mathbb T_N)}
\le 
C\alpha_N^2N^2\Big( 1+ N^{2/3} \sup_{0\le t\le T} \| \varphi^N_t \|_{\ell^\infty(V_N^{(2)})} \Big) .
\end{aligned}
\end{equation}
\end{lemma}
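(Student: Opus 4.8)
The plan is to split $g^N_t$ into its two natural pieces. Writing $\chi^N_t(x)=e^N_t(x)-v^N_t(x)^2$ for the discrete compressibility, formula \eqref{eq:def of g} reads $g^N_t(x)=\alpha_N N\,\nabla^N\chi^N_t(x)-(\nabla^N v^N_t(x))^2$. I would dispose of the second term first: since $v^N_\cdot$ solves the first equation in \eqref{eq:discrete_hdl}, whose generator $\Delta^N+2\alpha_N N\widetilde{\nabla}^N$ is translation invariant (and, for $N$ large, generates a probability semigroup because $\alpha_N\to0$), the operator $\nabla^N$ commutes with the associated semigroup, whence $\|\nabla^N v^N_t\|_{\ell^\infty(\mathbb T_N)}\le\|\nabla^N v^N_0\|_{\ell^\infty(\mathbb T_N)}\le C$ by \eqref{eq:assumption_initial_measure_volume}. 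This term is thus bounded uniformly in $N$ and $t$, which is harmless for \eqref{eq:correlation_estimate_key_lemma}. It then remains to estimate $\alpha_N N\,\nabla^N\chi^N_t$.

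For that I would derive the evolution equation for $\chi^N_t$. Combining \eqref{eq:disc_ene} for $e^N_t$ with the equation for $v^N_t$ in \eqref{eq:discrete_hdl} and computing $\partial_t(v^N_t(x)^2)$ via the discrete Leibniz identity $\Delta^N(f^2)(x)=2f(x)\Delta^Nf(x)+(\nabla^Nf(x))^2+(\nabla^Nf(x-1))^2$, the two advective contributions $2\alpha_N N\nabla^N\!\big(v^N_t(x-1)v^N_t(x)\big)$ — one coming from $\partial_t e^N_t$, the other from $\partial_t(v^N_t)^2$ — cancel exactly, leaving
\begin{equation*}
\partial_t\chi^N_t(x)=\Delta^N\chi^N_t(x)+2\alpha_N N\,\nabla^N\varphi^N_t(x-1,x)+(\nabla^N v^N_t(x))^2+(\nabla^N v^N_t(x-1))^2 .
\end{equation*}
Then Duhamel's formula with the one-dimensional random walk $\{Y^{(1)}_s\}$ (generator $\Delta^{(1)}_N$, transition kernel $p_s(x,y)=\rho_s(x-y)$), followed by $\nabla^N$ and using translation invariance of $p_s$, represents $\nabla^N\chi^N_t(x)$ as $E_x\!\big[\nabla^N\chi^N_0(Y^{(1)}_t)\big]$ — of order $O(1)$ by \eqref{eq:assumption_initial_measure_volume}--\eqref{eq:assumption_initial_meaure_energy} — plus $\int_0^t\sum_y\big(\nabla^N_x p_s(x,y)\big)F_{t-s}(y)\,ds$, where $F_r$ is the forcing above. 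The part of $F_r$ built from $(\nabla^N v^N)^2$ is bounded, so with the standard gradient bound $\sum_y|\nabla^N_x p_s(x,y)|\le C(N\wedge s^{-1/2})$ it contributes only $O(1)$ after integrating in $s$.

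The crux is the term $I_t(x):=2\alpha_N N\int_0^t\sum_{y\in\mathbb T_N}\big(\nabla^N_x p_s(x,y)\big)\,\nabla^N_y\varphi^N_{t-s}(y-1,y)\,ds$. A priori $\nabla^N_y\varphi^N_{t-s}(y-1,y)$ is controlled only by $2N\sup_r\|\varphi^N_r\|_{\ell^\infty(V^{(2)}_N)}$, which is too crude to use on all of $[0,t]$. I would fix a small $t_0>0$ and split $I_t$ at $s=t_0$. On $[0,t_0)$ keep the crude bounds (together with $\sum_y|\nabla^N_x p_s(x,y)|\le CN$), costing $C\alpha_N N^2 t_0\sup_r\|\varphi^N_r\|$; on $[t_0,t]$ perform a discrete summation by parts in $y$, moving $\nabla^N_y$ onto the kernel so as to produce $\int_{t_0}^t\sum_y\big(\nabla^N_x\nabla^N_y p_s(x,y)\big)\varphi^N_{t-s}(y-1,y)\,ds$, and invoke the mixed second-difference estimate $\sum_y|\nabla^N_x\nabla^N_y p_s(x,y)|\le C/s$ — which follows from the identity $\nabla^N_x\nabla^N_y p_s(x,y)=-\partial_s\rho_s(x-y)$ and the random walk bounds of the Appendix (e.g. \cref{lem:SRW_transition_prob_estimate}) — obtaining $C\alpha_N N\log(1/t_0)\sup_r\|\varphi^N_r\|$. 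Optimizing over $t_0$ (taking $t_0\sim N^{-2}$) bounds $|I_t(x)|$ by a factor of the form $C\alpha_N N\cdot N^{2/3}\sup_r\|\varphi^N_r\|$. Multiplying through by the outer $\alpha_N N$, collecting the $O(1)$ contributions, and using $\alpha_N N\le\alpha$ for $\kappa\ge1$ yields \eqref{eq:correlation_estimate_key_lemma}.

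The main obstacle is precisely the estimate of $I_t(x)$: the correlation function feeds back into the equation for $\chi^N$ (equivalently the energy profile) under a discrete gradient, hence at a scale a full factor $N$ above $\|\varphi^N\|$ itself, and the only available remedy is to integrate that gradient by parts onto the heat kernel at the price of a second-order kernel bound that is merely integrable in time away from $s=0$, which forces the short-time/long-time splitting. The circular appearance of $\sup_t\|\varphi^N_t\|$ on the right-hand side of \eqref{eq:correlation_estimate_key_lemma} is not a genuine difficulty: once this bound is inserted, together with \cref{lem:two-point_local_time_estimate}, into the Duhamel representation \eqref{eq:duhamel_principle} of $\varphi^N$, the resulting self-improvement carries a prefactor of order $\alpha_N^2 N^{2}\cdot N^{2/3}\cdot N^{-1}=\alpha_N^2 N^{5/3}=o(1)$ for $\kappa\ge1$, so the correlation estimate of \cref{lem:correlation_estimate} closes.
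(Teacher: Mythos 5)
Your proposal follows essentially the same route as the paper's proof: the same splitting of $g^N_t$ into $\alpha_N N\,\nabla^N\chi^N_t$ and $(\nabla^N v^N_t)^2$, the same evolution equation for the compressibility (the paper works directly with $h^N_t(x)=\chi^N_t(x+1)-\chi^N_t(x)$, which is $N^{-1}\nabla^N\chi^N_t(x)$), Duhamel with the symmetric one-dimensional walk, summation by parts to load a second difference of the kernel against $\varphi^N$ on the diagonal, and the short-time/long-time splitting optimized in $t_0$. One small caveat: the bound $\sum_y|\nabla^N_x\nabla^N_y p_s(x,y)|\le C/s$ does not follow from \cref{lem:SRW_transition_prob_estimate} as stated (summing the pointwise bound $C/(Ns^{3/2})$ over the $N$ sites only gives $Cs^{-3/2}$), but that weaker bound, combined with the choice $t_0=N^{-4/3}$ as in the paper rather than $t_0\sim N^{-2}$, already produces the factor $N^{2/3}$, so the conclusion is unaffected.
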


Given the last two lemmas, the proof of the estimate of two-point correlations is straightforward.
Indeed, we have that
\begin{equation*}
\begin{aligned}
\sup_{0\le t\le T} \|\varphi^N_t\|_{\ell^\infty(V_N^{(2)})} 
&\le 
\|\varphi^N_0\|_{\ell^\infty(V_N^{(2)})}  
+\frac{C}{N}\sup_{0\le t \le T} \| g^N_t\|_{\ell^\infty(\mathbb T_N)} \\ 
&\le \|\varphi^N_0\|_{\ell^\infty(V_N^{(2)})}  
+ C\alpha_N^2 N^{5/3} 
\sup_{0\leq t\leq T}\|\varphi_t^N\|_{\ell^\infty(V_N^{(2)})} +C\alpha_N^2N 
\end{aligned}
\end{equation*}
with some $C>0$. 
Thus, by the assumption \eqref{eq:assumption_initial_measure_correlation}, we have that 
\begin{equation}
\big(1-C\alpha_N^2 N^{5/3}\big)\sup_{0\leq t\leq T}\|\varphi_t^N\|_{\ell^\infty(V_N^{(2)})}
\le C'\Big( \alpha_N+\frac{1}{N} \Big)
\end{equation} 
with some $C,C'>0$. 
When $\kappa\ge 1>5/6$, for $N$ sufficiently large, we can absorb the second term on the right-hand side of the penultimate display, and thus we complete the proof of the desired bound of the two-point correlations. 

The proof of \cref{lem:two-point_local_time_estimate} is postponed to \cref{sec:rw_estimates}.
Here let us give a proof of \cref{lem:two-point_duhamel_reminder_estimate}. 

\begin{proof}[Proof of \cref{lem:two-point_duhamel_reminder_estimate}]
Recalling the definition of $g^N_t(x)$ in \eqref{eq:def of g}, note that 
\begin{equation*}
g^N_t(x) 
= \alpha_NN^2 h^N_t(x) 
- (\nabla^N v^N_t(x))^2 
\end{equation*}
where we set 
\begin{equation*}
h^N_t(x)
= e^N_t(x+1)-e^N_t(x) - v^N_t(x+1)^2 + v^N_t(x)^2.
\end{equation*}
Since $ v^N_t$ satisfies the discrete heat equation and under the assumption \eqref{eq:assumption_initial_measure_volume}, it is easy to see that the last term in the last display is bounded by a constant that depends only on $T$, so that our task is to give an estimate of $\|h^N_t\|_{\ell^\infty(\mathbb T_N)}$. 
A simple computation shows that
\begin{equation*}
\begin{split} 
\partial_t h_t^N(x)
=\Delta^{(1)}_N h^N_t(x)
+ \mathfrak h^N_t(x)
\end{split}
\end{equation*}
where
\begin{equation*}
\begin{aligned}
\mathfrak h^N_t(x) 
&=2\alpha_N \Delta^{(1)}_N \varphi^N_t(x,x+1) 
+ (\nabla^N v^N_t(x+1))^2 - (\nabla^N v^N_t(x-1))^2 .  
\end{aligned}
\end{equation*}
Now from Duhamel's principle, we get 
\begin{equation*}
\begin{aligned}
h_t^N(x)
&=E_x\bigg[ h_0^N(Y_t^{(1)})
+ \int_0^t \mathfrak h^N_{t-s}(Y_s^{(1)})ds \bigg] \\
&=E_x\big[ h_0^N(Y_t^{(1)}) \big] 
+ \int_0^t \sum_{y\in\mathbb T_N} \mathfrak h^N_{t-s}(y) P_x(Y_s^{(1)}=y) ds 
\end{aligned}
\end{equation*}
where recall that $\{Y_t^{(1)}: t\ge 0\}$ denotes the simple symmetric random walk on $\mathbb T_N$ with rate $N^2$ to all possible directions, starting from 
$x$, under the corresponding measure $P_x$ and we denote by $E_x$ the expectation with respect to the measure $P_x$. 

Now, recalling the definition of $\mathfrak h^N_t$, we have that 
\begin{equation}
\label{eq:two-point_h-function_after_duhamel}
\begin{aligned} 
h_t^N(x)
&=E_x\big[ h_0^N(Y_t^{(1)}) \big] 
+2\alpha_N \int_0^t \sum_{y\in\mathbb T_N} 
\varphi^N_{t-s}(y,y+1) \Delta_N^{(1)} P_x(Y_s^{(1)}=y) ds \\
&\quad+ \int_0^t \sum_{y\in\mathbb T_N}
\big[(\nabla^N v^N_s(y+1))^2 - (\nabla^N v^N_s(y-1))^2 \big] P_x(Y_s^{(1)}=y) ds
\end{aligned}
\end{equation}
where we used a summation-by-parts for the second term of the right-hand side and note that the discrete Laplacian is acting on the variable $y$. 
Note that the absolute value of the third term on the right-hand side of the last display is bounded from above by 
\begin{equation*}
\begin{aligned}
\frac{C}{N} \int_0^t \sum_{y\in\mathbb T_N} P_x(Y_s^{(1)}=y) ds 
\le \frac{Ct}{N}  
\end{aligned}
\end{equation*}
for some $C=C(\| v_0'\|_{L^\infty(\mathbb T)}, \|  v_0''\|_{L^\infty(\mathbb T)})>0$ where we used \cref{lem:pde_estimate_laplacian_of_rho_squared} below.
This gives a term of order $O(\alpha_N^2N^2)$ in the right-hand side of the desired estimate \eqref{eq:correlation_estimate_key_lemma}.  
To proceed, let us define 
\begin{equation*}
\mathcal I^N_{[a,b)}
= \int_{a}^b \sum_{y\in\mathbb T_N}\varphi^N_{t-s}(y,y+1)
\Delta_N^{(1)} P_x(Y_s^{(1)}=y) ds
\end{equation*}
for $a,b\in [0,T]$ such that $a<b$. 
Hereafter we handle the second term in the right-hand side of \eqref{eq:two-point_h-function_after_duhamel}, that is, $2\alpha_N\mathcal I^N_{[0,t)}$. 
To that end, fix $t_0\in[0,t]$ and we split the integral according to short times and long times. 
First, for the integral over the short interval $[0,t_0)$, we have the bound 
\begin{equation*}
\begin{split}
| \mathcal I^N_{[0,t_0)} | 
\le \sup_{0\le s\le t} \| \varphi^N_s\|_{\ell^\infty(V_N^{(2)})} 
\int_{0}^{t_0} \sum_{y\in\mathbb T_N} \big|\Delta_N^{(1)} P_x(Y_s^{(1)}=y)\big| ds 
\le CN^2 t_0 \sup_{0\le s\le t} \| \varphi^N_s\|_{\ell^\infty(V_N^{(2)})}
\end{split}
\end{equation*}
with some $C>0$, where we used the fact that $P_x$ is a probability.
On the other hand, for the integral over the other interval $[t_0,t)$, we use a global estimate of the transition probability as follows (see \cref{lem:SRW_transition_prob_estimate}): 
\begin{equation*}
|\Delta_N^{(1)} P_x(Y_t^{(1)}=y)|
\le \frac {C}{Nt^{3/2}},
\end{equation*}
for any $x,y\in\mathbb T_N$ and for any $t>0$.  
Then, we have that
\begin{equation*}
\begin{split}
| \mathcal I^N_{[t_0,t)]}| 
&\le \sup_{0\le s\le t} \| \varphi^N_s\|_{\ell^\infty(V_N^{(2)})} 
\int_{t_0}^t \sum_{y\in\mathbb T_N} 
\big| \Delta_N^{(1)}P(Y_s^{(1)}=y)\big| ds \\
&\le \sup_{0\le s\le t} \| \varphi^N_s\|_{\ell^\infty(V_N^{(2)})}
\int_{t_0}^t \frac{ds}{s^{3/2}}
\le \frac{2}{\sqrt{t_0}} \sup_{0\le s\le t} \| \varphi^N_s\|_{\ell^\infty(V_N^{(2)})}. 
\end{split}
\end{equation*}
Now we optimize in $t_0$ to get $t_0=N^{-4/3}$ and 
\begin{equation*}
\begin{split}
| \mathcal I^N_{[0,t)} | 
\le CN^{2/3} \sup_{0\le s\le t} \| \varphi^N_s\|_{\ell^\infty(V_N^{(2)})}
\end{split}
\end{equation*}
for some $C>0$. 
Hence, going back to \eqref{eq:two-point_h-function_after_duhamel}, we have the bound 
\begin{equation*}
\begin{aligned}
\sup_{0\le t\le T} \| h^N_t\|_{\ell^\infty(\mathbb T_N)}
&\le \| h^N_0\|_{\ell^\infty(\mathbb T_N)}
+ C \alpha_N \big| \mathcal I^N_{[0,T)} \big| 
+ \frac{CT}{N}\\
&\le \| h^N_0\|_{\ell^\infty(\mathbb T_N)}
+ C \alpha_NN^{2/3} 
\sup_{0\le t\le T} \| \varphi^N_t\|_{\ell^\infty(V_N^{(2)})}
+ \frac{CT}{N} .
\end{aligned}
\end{equation*}
This immediately completes the proof of the desired assertion, noting that the assumption \eqref{eq:assumption_initial_meaure_energy} enables us to see that $\| h^N_0\|_{\ell^\infty(\mathbb T_N)} \le C/N$ for some $C=C(e_0)>0$. 
\end{proof}

\begin{lemma}
\label{lem:pde_estimate_laplacian_of_rho_squared}
Assume \eqref{eq:assumption_initial_measure_volume}. 
For any $t\ge 0$, there exists some constant $C>0$ depending on $t$ and the initial volume profile $v^N_0(x)$ such that 
\begin{equation*}
\max_{x\in\mathbb T_N} 
N\big|
(\nabla^N v^N_t(x+1))^2 - (\nabla^N v^N_t(x-1))^2 \big|
\le C. 
\end{equation*}
\end{lemma}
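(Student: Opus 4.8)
The plan is to factor the difference of squares and control the two factors separately. Writing
\[
(\nabla^N v^N_t(x+1))^2 - (\nabla^N v^N_t(x-1))^2
= \big(\nabla^N v^N_t(x+1) - \nabla^N v^N_t(x-1)\big)\,\big(\nabla^N v^N_t(x+1) + \nabla^N v^N_t(x-1)\big),
\]
the claim reduces to two uniform bounds: $\|\nabla^N v^N_t\|_{\ell^\infty(\mathbb T_N)}\le C$ and $\big|\nabla^N v^N_t(x+1)-\nabla^N v^N_t(x-1)\big|\le C/N$, both uniform in $N$ and in $t\ge 0$.

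First I would record the elementary identity, valid for any real sequence $g=(g_x)_{x\in\mathbb T_N}$,
\[
\nabla^N g_{x+1} - \nabla^N g_{x-1}
= \tfrac1N\big(\Delta^N g_{x+1} + \Delta^N g_{x}\big),
\]
which is a one-line computation from the definitions in \eqref{eq:discrete_operators_definition}. Applying it with $g=v^N_t$, the first factor above equals $\tfrac1N\big(\Delta^N v^N_t(x+1)+\Delta^N v^N_t(x)\big)$, so
\[
N\big|(\nabla^N v^N_t(x+1))^2 - (\nabla^N v^N_t(x-1))^2\big|
\le 4\,\|\Delta^N v^N_t\|_{\ell^\infty(\mathbb T_N)}\,\|\nabla^N v^N_t\|_{\ell^\infty(\mathbb T_N)},
\]
and it remains to bound $\|\nabla^N v^N_t\|_{\ell^\infty(\mathbb T_N)}$ and $\|\Delta^N v^N_t\|_{\ell^\infty(\mathbb T_N)}$ uniformly in $N$ and $t$.

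The key observation is that $v^N_t$ solves the linear equation $\partial_t v^N_t = \mathcal L_N v^N_t$ with $\mathcal L_N := \Delta^N + 2\alpha_N N\,\widetilde{\nabla}^N$, which is the first line of \eqref{eq:discrete_hdl}. For $N$ large enough that $\alpha_N=\alpha/N<1$, one checks that $\mathcal L_N g_x = N^2(1+\alpha_N)(g_{x+1}-g_x) + N^2(1-\alpha_N)(g_{x-1}-g_x)$ is the generator of a weakly biased nearest-neighbour random walk on $\mathbb T_N$, so $e^{t\mathcal L_N}$ is a Markov semigroup, hence a contraction on $\ell^\infty(\mathbb T_N)$. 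Since $\nabla^N$ and $\Delta^N$ are translation invariant they commute with $\mathcal L_N$, and therefore with $e^{t\mathcal L_N}$; thus $\nabla^N v^N_t = e^{t\mathcal L_N}(\nabla^N v^N_0)$ and $\Delta^N v^N_t = e^{t\mathcal L_N}(\Delta^N v^N_0)$, and the contraction property gives $\|\nabla^N v^N_t\|_{\ell^\infty(\mathbb T_N)}\le\|\nabla^N v^N_0\|_{\ell^\infty(\mathbb T_N)}$ and $\|\Delta^N v^N_t\|_{\ell^\infty(\mathbb T_N)}\le\|\Delta^N v^N_0\|_{\ell^\infty(\mathbb T_N)}$. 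Both right-hand sides are bounded uniformly in $N$ by \eqref{eq:assumption_initial_measure_volume}, and plugging this back into the previous display proves the lemma, in fact with $C$ uniform in $t\ge 0$.

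I do not expect a serious obstacle. The only mild points to address are that the drift does not spoil positivity of the jump rates (true once $N$ is large, and the remaining finitely many values of $N$ are harmlessly absorbed into $C$), and that, if one prefers to avoid the semigroup/commutation argument, one may instead apply the discrete parabolic maximum principle directly to the equations satisfied by $\nabla^N v^N_t$ and $\Delta^N v^N_t$, which solve the same linear equation as $v^N_t$.
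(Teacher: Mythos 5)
Your proposal is correct and follows essentially the same route as the paper: both factor the difference of squares, rewrite $N\big(\nabla^N v^N_t(x+1)-\nabla^N v^N_t(x-1)\big)$ as $\Delta^N v^N_t(x+1)+\Delta^N v^N_t(x)$, and then bound $\|\nabla^N v^N_t\|_{\ell^\infty}$ and $\|\Delta^N v^N_t\|_{\ell^\infty}$ uniformly via the maximum principle together with \eqref{eq:assumption_initial_measure_volume}. The only difference is that you spell out the maximum-principle step explicitly through the contraction of the Markov semigroup and its commutation with the translation-invariant operators, which the paper leaves implicit.
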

\begin{proof}
First, note that we have an identity 
\begin{equation*}
\begin{aligned}
&(\nabla^N v^N_t(x+1))^2 - (\nabla^N v^N_t(x-1))^2 \\
&\quad
= \big( \nabla^N v^N_t(x+1) - \nabla^N v^N_t(x-1)\big)
\big( \nabla^N v^N_t(x+1))+ (\nabla^N v^N_t(x-1)\big) .
\end{aligned}
\end{equation*}
Note that the sum of gradient has a uniform bound at any time, according to a simple argument using a maximum principle. 
Now, our task is to show that $2\widetilde{\nabla}^N\nabla^N v^N_t(x)$ is uniformly bounded at any $t$.
To that end, notice that 
\begin{equation*}
\begin{aligned}
2\widetilde{\nabla}^N\nabla^N v^N_t(x)
&= N^2
\big[v^N_t(x+2)-v^N_t(x+1) - v^N_t(x) + v^N_t(x-1)\big]\\
&= N^2 \big[ 
\big(v^N_t(x+2)-v^N_t(x+1)\big) 
- \big(v^N_t(x+1) - v^N_t(x) \big) \\
&\quad+ \big( v^N_t(x+1) - v^N_t(x)\big) 
- \big( v^N_t(x) - v^N_t(x-1)\big) 
\big] \\
&= \Delta^{(1)}_N v^N_t(x+1) 
+ \Delta^{(1)}_N v^N_t(x) . 
\end{aligned}
\end{equation*}
Hence, again by the maximum principle, we can easily check that 
\begin{equation*}
\| \Delta_N^{(1)}v^N_t\|_{\ell^\infty(\mathbb T_N)} \le C
\end{equation*} 
for some $C>0$, provided the same bound holds at time $0$. Hence, we complete the proof of the assertion. 
\end{proof}

\appendix
\section{Evolution of the two-point correlation function}
\label{app:computation_generator} 
Recall the operator $\mathscr L^{(2)}_N$ defined in \eqref{eq:2d_RW_operator_definition}. 
Here let us compute the derivative in time of the correlation function.  
Using Kolmogorov's forward equation, we get that
\begin{equation}
\label{eq:derivative_volume_correlation}
\begin{aligned}
\partial_t\varphi_t^N(x,y)
=\mathbb E_N\big[L_N\big(\eta_t(x)\eta_t(y)\big)\big] -
\partial_t\big( v_t^N(x) v_t^N(y)\big) 
\end{aligned}
\end{equation}
for each $x,y\in\mathbb Z$. 
A long but straightforward computation enables us to compute the time evolution of the two-point correlation function as follows. 

\begin{lemma}
\label{lem:action_correlation}
For each $x,y\in V_N^{(2)}$, we have that 
\begin{equation}\label{eq:time_evolution_volume_correlation}
\begin{split}
\partial_t \varphi^N_t(x,y)
=\mathscr L_N^{(2)} 
\varphi_t^N(x,y)
+ \mathfrak g^N_t(x,y) 
\end{split}
\end{equation} 
where 
$
\mathfrak g^N_t(x,y)
= g^N_t(x)\mathbf{1}_{y=x+1}
+ g^N_t(x-1)\mathbf{1}_{y=x-1}
$
and $g^N_t(x)$ is defined in \eqref{eq:def of g}. 
\end{lemma}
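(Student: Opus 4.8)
The plan is to start from the identity \eqref{eq:derivative_volume_correlation}, expand $L_N = N^2S + \alpha_N N^2A$, and compute the action of each part on the monomial $\eta_t(x)\eta_t(y)$, distinguishing the ``bulk'' regime $|y-x|\ge 2$ from the two boundary lines $y=x\pm 1$. For the symmetric part only the exchange bonds incident to $x$ or to $y$ contribute; when $|y-x|\ge 2$ these bonds are disjoint and one recovers the full nearest-neighbour discrete Laplacian in each variable, whereas at $y=x+1$ the bond $\{x,x+1\}$ acts trivially on $\eta_t(x)\eta_t(x+1)$, so precisely the two ``diagonal'' transitions $x\to x+1$ and $y\to y-1$ are removed — which is exactly the reflection built into $\mathscr L^{(2)}_N$ at $y=x+1$. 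For the Hamiltonian part one computes directly $A(\eta(x)\eta(y)) = (\eta(x+1)-\eta(x-1))\eta(y) + \eta(x)(\eta(y+1)-\eta(y-1))$; in the bulk this reproduces the asymmetric drift of $\mathscr L^{(2)}_N$, while at $y=x+1$ it additionally produces the terms $\alpha_N N^2(\eta(x+1)^2 - \eta(x)^2)$, which upon taking expectations become $\alpha_N N^2\big(e^N_t(x+1) - e^N_t(x)\big)$.

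Next I would handle the subtracted term $\partial_t\big(v^N_t(x)v^N_t(y)\big)$ via the discrete evolution equation for the volume profile \eqref{eq:discrete_hdl}, namely $\partial_t v^N_t(x) = \Delta^N v^N_t(x) + 2\alpha_N N\,\widetilde{\nabla}^N v^N_t(x)$. Writing $c^N_t(x,y) = \mathbb E_N[\eta_t(x)\eta_t(y)]$, so that $\varphi^N_t = c^N_t - v^N_t\otimes v^N_t$, the key algebraic observation is that for $|y-x|\ge 2$ the product rule together with \eqref{eq:discrete_hdl} gives exactly $\partial_t\big(v^N_t(x)v^N_t(y)\big) = \mathscr L^{(2)}_N\big(v^N_t\otimes v^N_t\big)(x,y)$, while the first paragraph gives $\mathbb E_N[L_N(\eta_t(x)\eta_t(y))] = \mathscr L^{(2)}_N c^N_t(x,y)$; subtracting and using linearity of $\mathscr L^{(2)}_N$ yields \eqref{eq:time_evolution_volume_correlation} with $\mathfrak g^N_t(x,y)=0$. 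At $y=x+1$ the same computation leaves remainders: one finds $\partial_t\big(v^N_t(x)v^N_t(x+1)\big) - \mathscr L^{(2)}_N\big(v^N_t\otimes v^N_t\big)(x,x+1) = (\nabla^N v^N_t(x))^2 + \alpha_N N^2\big(v^N_t(x+1)^2 - v^N_t(x)^2\big)$, which, combined with the $A$-remainder above, assembles precisely into $g^N_t(x)$ as in \eqref{eq:def of g}.

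For the line $y=x-1$ I would not redo the computation but invoke the symmetry $\varphi^N_t(x,y)=\varphi^N_t(y,x)$ together with the invariance of $\mathscr L^{(2)}_N$ under the coordinate swap $(x,y)\mapsto(y,x)$, immediate from \eqref{eq:2d_RW_operator_definition}; applying the $y=x+1$ case to the pair $(x-1,x)$ then produces the term $g^N_t(x-1)\mathbf{1}_{y=x-1}$. I would also remark that the right-hand side of \eqref{eq:time_evolution_volume_correlation} is well posed on $V_N^{(2)}$: every value of $\varphi^N_t$ referenced by $\mathscr L^{(2)}_N$ at an off-diagonal point is itself off-diagonal (it may lie on a boundary line $\mathcal D^{(2)}_\pm$, which is permitted).

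The main obstacle is the careful bookkeeping on the boundary lines: one must match the transitions of the reflected walk generated by $\mathscr L^{(2)}_N$ against exactly those exchange bonds that act nontrivially on $\eta_t(x)\eta_t(x+1)$, and then track the ``diagonal'' monomials $\eta_t(x)^2,\eta_t(x+1)^2$ produced by the Hamiltonian part and $v^N_t(x)^2,v^N_t(x+1)^2$ produced by the product rule, checking that they recombine into the energy-and-gradient expression $g^N_t$ rather than into spurious terms. Everything else is a routine, if lengthy, algebraic verification.
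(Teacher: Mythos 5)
Your proposal is correct and follows essentially the same route as the paper's Appendix~A: Kolmogorov's forward equation, the splitting into the bulk case $|y-x|\ge 2$ and the boundary lines $y=x\pm1$, the identification of the exchange and Hamiltonian contributions with the reflected/drifted generator $\mathscr L^{(2)}_N$, the cancellation against $\partial_t(v^N_t(x)v^N_t(y))$ via \eqref{eq:discrete_hdl}, and symmetry for $y=x-1$. The bookkeeping you describe at $y=x+1$ (the missing diagonal transitions of $S$, the extra $\alpha_N N^2(\eta(x+1)^2-\eta(x)^2)$ from $A$, and the remainders $(\nabla^N v^N_t(x))^2+\alpha_N N^2(v^N_t(x+1)^2-v^N_t(x)^2)$ from the product rule) indeed recombines exactly into $g^N_t(x)$, matching the paper's carr\'e-du-champ computation.
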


Here, note that since \eqref{eq:time_evolution_volume_correlation} is a system of finite numbers of ordinary differential equations, given an initial correlation function $\varphi^N_0$, it is clear that the system has a unique solution. 
To compute the action of the generator on the correlation function, we split the computation into two cases: (i) $|x-y|>1$ and (ii) $|x-y|=1$. 

\subsection*{(i) The case \texorpdfstring{$|x-y|>1$}{in bond}}
First, note that 
\begin{align*}
L_N\big(\eta(x)\eta(y)\big) 
&= \eta(x) (\Delta^N\eta(y)) + (\Delta^N\eta(x))\eta(y) \\
&\quad+ \alpha_NN^2 \big\{(\eta(x+1)-\eta(x-1))\eta(y)+(\eta(y+1)-\eta(y-1))\eta(x)\big\},
\end{align*}
so that 
\begin{equation}
\label{eq:expectation_action_correlation}
\begin{aligned}
&\mathbb E_N\big[L_N(\eta(x)\eta(y))\big] \\
&\quad= N^2 \mathbb E_N\big[ \eta(y)\eta(x+1)+\eta(y)\eta(x-1)+\eta(x)\eta(y+1)+\eta(x)\eta(y-1)-4\eta(x)\eta(y)\big] \\
&\qquad+ \alpha_NN^2 \mathbb E_N\big[ \eta(x+1)\eta(y)-\eta(x-1)\eta(y)+\eta(y+1)\eta(x)-\eta(y-1)\eta(x)\big] . 
\end{aligned}
\end{equation}
After centering, we can simplify the relation \eqref{eq:expectation_action_correlation} as 
\begin{equation}
\label{eq:expectation_ection_correlation_computed}
\begin{aligned}
&\E_N\big[ L_N(\eta(x)\eta(y))\big] \\
&\quad= \Delta^{(2)}_N\varphi_t^N(x,y)
-\alpha_NN^2\Big\{\varphi_t^N(x,y-1)+\varphi_t^N(x-1,y)-\varphi_t^N(x,y+1)-\varphi_t^N(x+1,y)\Big\}\\
&\qquad+  v_t^N(y)\Delta^N v_t^N(x)
+ v_t^N(x)\Delta^N v_t^N(y)\\
&\qquad+ \alpha_NN^2 \Big\{ v_t^N(x+1) v_t^N(y)- v_t^N(x-1) v_t^N(y)+ v_t^N(x) v_t^N(y+1)- v_t^N(x) v_t^N(y-1)\Big\} .
\end{aligned}
\end{equation}
To compute the term with the time derivative in \eqref{eq:derivative_volume_correlation}, note that 
\begin{align*}
\partial_t\big( v_t^N(x) v_t^N(y)\big)
&=  v_t^N(y)\partial_t v_t^N(x) 
+  v_t^N(x)\partial_t v_t^N(y) \\
&=  v_t^N(y)\E_N[L_N\eta(x)]+  v_t^N(x)\E_N[L_N\eta(y)] 
\end{align*}
again by the Kolmogorov equation. 
Since 
\begin{equation*}
L_N\eta(x) 
=\Delta^N \eta(x) 
+ \alpha_NN^2 \big( \eta(x+1)-\eta(x-1)\big) ,
\end{equation*}
we can deduce the time evolution of the volume element \eqref{eq:discrete_hdl}, that is,  
\begin{equation*}
\partial_t v_t^N(x)
=\Delta^N  v_t^N(x)
+\alpha_NN^2 \big(  v^N_t(x+1)- v^N_t(x-1)\big).
\end{equation*}
Then, the second term in \eqref{eq:derivative_volume_correlation} becomes 
\begin{align*}
& v_t^N(y)\Delta^N v_t^N(x)
+ v_t^N(x)\Delta^N v_t^N(y)\\
&+\alpha_NN^2 \Big\{ v_t^N(x+1) v_t^N(y)- v_t^N(x-1) v_t^N(y)+ v_t^N(x) v_t^N(y+1)- v_t^N(x) v_t^N(y-1)\Big\},
\end{align*}
which is nothing but the last two terms in \eqref{eq:expectation_ection_correlation_computed}. 
Therefore, for $|x-y|>1$,
\begin{equation*}
\partial_t\varphi_t^N(x,y)
=\Delta^{(2)}_N\varphi_t^N(x,y)
-\alpha_NN^2\Big\{\varphi_t^N(x,y-1)+\varphi_t^N(x-1,y)-\varphi_t^N(x,y+1)-\varphi_t^N(x+1,y)\Big\},
\end{equation*}
which can be written as 
\begin{equation*}
\partial_t\varphi_t^N(x,y)=\mathscr L^{(2)}_N\varphi_t^N(x,y).
\end{equation*}

\subsection*{(ii) The case \texorpdfstring{$|x-y|=1$}{nearest-neighbor}} 
We present the computation for the case $y=x+1$. The remaining case $y=x-1$ follows by symmetry. First, note that 
\begin{equation*}
\begin{aligned}
L_N(\eta(x)\eta(x+1))
&= \eta(x) (\Delta^N\eta(x+1)) + (\Delta^N\eta(x))\eta(x+1) 
- (\nabla^N \eta(x))^2 \\
&\quad+ \alpha_NN^2 \big\{(\eta(x+1)-\eta(x-1))\eta(x+1)+(\eta_{x+2}-\eta_{x})\eta(x)\big\},
\end{aligned}
\end{equation*}
where the third term in the right-hand side of the last display is a correction term by the \textit{carr\'e du champ} operator, and the rest of the computation is not varying except for this correction term.  
Thus, we have that 
\begin{equation*}
\begin{aligned}
\partial_t\varphi_t^N(x,x+1)
&=\Delta^{(2)}_N\varphi_t^N(x,x+1)
- \mathbb E_N[(\nabla^N \eta(x))^2]\\
&-\alpha_NN^2\Big\{\varphi_t^N(x,x)+\varphi_t^N(x-1,x+1)-\varphi_t^N(x,x+2)-\varphi_t^N(x+1,x+1)\Big\} \\
&=\mathscr L^{(2)}_N\varphi_t^N(x,x+1)+N^2\Big(\varphi_t^N(x+1,x+1)+\varphi_t^N(x,x)-2\varphi_t^N(x,x+1)\Big)\\
&\quad-\alpha_N N^2\Big(\varphi_t^N(x,x)-\varphi_t^N(x+1,x+1)\Big)\\
&\quad- N^2e^N_t(x) 
- N^2 e^N_t(x+1)
+ 2N^2\big(\varphi^N_t(x,x+1)+ v^N_t(x) v^N_t(x+1)\big) \\
&=\mathscr L^{(2)}_N\varphi_t^N(x,x+1)-N^2\Big(v_t^N(x+1)-v_t^N(x)\Big)^2\\
&\quad -\alpha_N N^2\Big(e^N(x)-v_t^N(x)^2-e^N_t(x+1)+v_t^N(x+1)^2\Big).
\end{aligned}
\end{equation*}
Therefore, we conclude that 
\begin{equation*}
    \partial_t\varphi_t^N(x,x+1)=\mathscr L^{(2)}_N\varphi_t^N(x,x+1)+g_t^N(x).
\end{equation*}

\section{Random walk estimates}
\label{sec:rw_estimates}
Recall that the estimate on correlation function was governed by the ones for the random walk $\{X^{(2),N}_t: t \ge0\}$ on $V^{(2)}_N$ generated by $\mathscr L_N^{(2)}$.
Here we give a quantitative estimate associated to random walks presented in the main part of the paper. 
Throughout this section, let us omit the dependency on $N$ for symbols concerning random walks.  
First, let us give a proof of \cref{lem:two-point_local_time_estimate}. 
To that end, we make the following mapping of the two-dimensional random walk $\{X_t^{(2)}: t\ge0\}$ with a one-dimensional random walk $\{\widetilde X_t^{(1)}: t\ge0\}$ on the space $\Lambda_{N-1}=\{1,\ldots,N-1\}$ in the following way. We map the diagonal lines $y=x+r$ with $r\in\Lambda_{N-1}$. 
Since the two-dimensional random walk moves along the diagonals $y=x+r$ to $y=x+r\pm1$ with the same rates $2N^2$, then the map gives us a one-dimensional random walk which moves in $r\in\Lambda_{N-1}$ with rates $2N^2$ but at $x=1$ and $x=N-1$ it only jumps to $x=2$ and $x=N-2$ respectively. 
The generator of this projected random walk is given at $x\in\{1,\cdots, N-1\}$ by 
\begin{equation}
\label{eq:1d_RW_after_projection_generator}
\mathfrak L_N^{(1)}
f(x)
= 
\begin{cases}
\begin{aligned}
& 2N^2 \big( f(x+1)+f(x-1) - 2f(x) \big) && \text{ if } x\in\{2,\ldots, N-2\} ,\\
& 2N^2\big( f(2)-f(1)\big) && \text{ if } x=1, \\
& 2N^2\big( f(N-2)-f(N-1)\big)  && \text{ if } x=N-1. 
\end{aligned}
\end{cases}
\end{equation}
for any test function $f$. 
Then, our task will be to estimate the local time where this projected random walk $\{\widetilde X_t^{(1)}: t \ge 0\}$, generated by the operator $\mathfrak L_N^{(1)}$, stays at the boundary points $x=1, N-1$ up to time $t$.
This is more tractable than estimating the original two-dimensional random walk.  
In what follows, let us denote by $\mathbf P_x$ the probability associated to the projected random walk $\{\widetilde X_t^{(1)}:t\ge 0\}$ starting from $x\in \Lambda_{N-1}$ and write the expectation with respect to $\mathbf P_x$ by $\mathbf E_x$. 

Now, let us justify the above strategy as follows.  
Recalling that $\{X^{(2)}_t:t\ge0\} $ is a random walk generated by $\mathscr L^{(2)}_N$, it follows that $\mathbf{P}_{(x,y)}(X_t^{(2)} =(x',y'))$ satisfies the discrete equation: 
\begin{equation*}
\begin{cases}
\begin{aligned}
&\partial_t \mathbf{P}_{(x,y)}(X_t^{(2)} =(x',y'))
= \mathscr L^{(2)}_N \mathbf{P}_{(x,y)}(X_t^{(2)} =(x',y')),  \\
& \mathbf{P}_{(x,y)}(X_0^{(2)} =(x',y'))
= \mathbf{1}_{(x,y)=(x',y')}
\end{aligned}
\end{cases}
\end{equation*}
where the operator $\mathscr L^{(2)}_N$ acts on the variable $(x',y')$ in the evolution equation. 
To go through further, set $t\in\{ 1,\ldots, N-1\}$ via
$r \equiv y-x \pmod N$.
Moreover, for any $r' \in \{ 0,\ldots, N-1\}$, let 
\begin{equation*}
p_t(r,r')
= \sum_{\substack{(x',y')\in V_N^{(2)}, \\ y'-x'\equiv r'}} 
\mathbf{P}_{(x,y)}(X_t^{(2)} =(x',y')) 
\end{equation*}
where the condition in the summation symbol is modulo $N$. 
Then, we have that 
\begin{equation*}
p_0(r,r') 
= \sum_{\substack{(x',y')\in V_N^{(2)}, \\ y'-x'\equiv r'}}
\mathbf{1}_{(x,y)=(x',y')}
= \mathbf{1}_{r=r'} .
\end{equation*}
Moreover, for $(x',y') \in V_N^{(2)}\setminus \partial V_N^{(2)}$, namely when $r'$ is in the bulk $\{ 2,\ldots,N-2\}$ , we have 
\begin{equation*}
\begin{aligned}
\partial_t p_t(r,r')
&= \sum_{\substack{(x',y')\in V_N^{(2)}, \\ y'-x'\equiv r'}}
\mathscr L^{(2)}_N \mathbf{P}_{(x,y)}(X_t^{(2)} =(x',y')) \\ 
&= N^2(1+\alpha_N) 
\sum_{\substack{(x',y')\in V_N^{(2)}, \\ y'-x'\equiv r'}}
\Big(
\mathbf{P}_{(x,y)}(X_t^{(2)} =(x'+1,y'))
+ \mathbf{P}_{(x,y)}(X_t^{(2)} =(x',y'+1)) \\ 
&\qquad\qquad\qquad\qquad\qquad- 2 \mathbf{P}_{(x,y)}(X_t^{(2)} =(x',y')) \Big) \\
&\quad+  N^2(1-\alpha_N) 
\sum_{\substack{(x',y')\in V_N^{(2)}, \\ y'-x'\equiv r'}}
\Big(
\mathbf{P}_{(x,y)}(X_t^{(2)} =(x'-1,y'))
+ \mathbf{P}_{(x,y)}(X_t^{(2)} =(x',y'-1)) \\
&\qquad\qquad\qquad\qquad\qquad- 2 \mathbf{P}_{(x,y)}(X_t^{(2)} =(x',y')) \Big) \\
&= N^2(1+\alpha_N)
\big( p_t(r,r'-1) + p_t(r,r'+1) - 2p_t(r,r') \big) \\
&\quad+ N^2(1-\alpha_N)
\big( p_t(r,r'+1) + p_t(r,r'-1) - 2p_t(r,r') \big) \\
&= 2\Delta_N^{(1)} p_t(r,r')
\end{aligned}
\end{equation*}
where in the utmost right-hand side, the one-dimensional Laplacian $\Delta_N^{(1)}$ is acting on the variable $r'$. 
On the other hand, for the boundary case, we have that 
\begin{equation*}
\begin{aligned}
\partial_t p_t(r,1)
&= \sum_{\substack{(x',y')\in V_N^{(2)}, \\ y'-x'\equiv 1}}
\mathscr L^{(2)}_N \mathbf{P}_{(x,y)}(X_t^{(2)} =(x',y')) \\ 
&= \sum_{x'\in \mathbb T_N} 
\mathscr L^{(2)}_N \mathbf{P}_{(x,y)}(X_t^{(2)} =(x',x'+1)) \\ 
&= N^2(1+\alpha_N) \sum_{x'\in \mathbb T_N} 
\Big( \mathbf{P}_{(x,y)}(X_t^{(2)} =(x',x'+2))
- \mathbf{P}_{(x,y)}(X_t^{(2)} =(x',x'+1)) \Big)
\\ 
&\quad+ N^2(1-\alpha_N) \sum_{x'\in \mathbb T_N} 
\Big( \mathbf{P}_{(x,y)}(X_t^{(2)} =(x'-1,x'+1)) 
- \mathbf{P}_{(x,y)}(X_t^{(2)} =(x',x'+1)) \Big)
\\ 
&= N^2(1+\alpha_N) \big( p_t(r,2) - p_t(r,1)\big) 
+ N^2(1-\alpha_N) \big( p_t(r,2) - p_t(r,1)\big) \\
&= 2N^2 \big( p_t(r,2) - p_t(r,1)\big). 
\end{aligned}
\end{equation*}
Analogously, we have that 
\begin{equation*}
\partial_t p_t(r,N-1)
= 2N^2 \big( p_t(r,N-1) - p_t(r,N-1)\big). 
\end{equation*}
Therefore, it turns out that $p_t(r,r')$, $r,r'\in\Lambda_{N-1}$ satisfies 
\begin{equation*}
\partial_t p_t(r,r')
= \mathfrak L_N^{(1)} p_t(r,r')
\end{equation*}
where the operator $\mathfrak L_N^{(1)}$ is acting on the variable $r'$, so that $p_t(\cdot,\cdot)$ is nothing but the transition probability associated to the projected random walk $\{\widetilde X_t^{(1)}: t \ge0\}$:
\begin{equation*}
p_t(x,y)
= \mathbf P_x(\widetilde X_t^{(1)}=y) 
\end{equation*}
for any $x,y\in \Lambda_{N-1}$. 

With the help of the above reduction of random walks to a lower-dimensional space, we can show the estimate of the local time $T^N_t$. Note that we have the identity 
\begin{equation*}
\begin{aligned}
T^N_t(x,y)
= \int_0^t \big( p_s(r,1) + p_s(r, N-1)\big) ds
\end{aligned}
\end{equation*} 
where recall that $r\in \{ 1,\ldots, N-1\}$ is defined by $r\equiv y-x \pmod{N}$.  
Now, we are in a position to give a proof of \cref{lem:two-point_local_time_estimate}.

\begin{proof}[Proof of \cref{lem:two-point_local_time_estimate}]
Let us take a function $g(x)=-(x-x_0)^2$ where $x_0$ will be chosen later.  
A simple computation shows that 
\begin{equation}
\label{eq:RW_lap}
\begin{cases}
\mathfrak L_N^{(1)} g(x)= -2N^2 , \quad x\in \{2,\ldots, N-2\} , \\ 
\mathfrak L_N^{(1)} g(1)=-2N^2(3-2x_0), \\
\mathfrak L_N^{(1)} g(N-1)=-2N^2(-2N+3-2x_0).
\end{cases}
\end{equation}
Applying Dynkin's formula with the function $g$ we get 
\begin{equation*}
g(\widetilde X_t^{(1)})-g(\widetilde X_{0}^{(1)})
-\int_{0}^t \mathfrak L_N^{(1)} 
g(\widetilde X_s^{(1)})ds
\end{equation*}
is a mean-zero martingale with respect to the natural filtration. 
Thus, by taking the expectation in the last display, i.e., the definition of the mean-zero martingale, we get for any $x\in\Lambda_{N-1}$ that 
\begin{equation*}
0=
\mathbf E_{x}[g(\widetilde X_{t}^{(1)})]
-\mathbf E_{x}[g(\widetilde X_{0}^{(1)})]
- \mathbf E_{x}\bigg[\int_{0}^t \mathfrak L_N^{(1)} 
g(\widetilde X_{s}^{(1)})ds \bigg]
\end{equation*}
Now we use the computation \eqref{eq:RW_lap} to get 
\begin{equation*}
\begin{split}
\mathbf E_{x}\bigg[\int_{0}^t \mathfrak L_N^{(1)}g(\widetilde X_s^{(1)}) ds\bigg]
&=-2N^2 \int_0^t\mathbf P_{x}(\widetilde X_{s}^{(1)}\neq 1, N-1) ds
-2N^2(3-2x_0) \int_0^t\mathbf P_{x}(\widetilde X_{s}^{(1)}= 1) ds\\
&\quad-2N^2(-2N+3-2x_0)\int_0^t\mathbf P_{x}(\widetilde X_{s}^{(1)}= N-1) ds. 
\end{split}
\end{equation*}
If we choose $x_0=N/2+3/2$, then last identity becomes
\begin{equation*}
\begin{split}
\mathbf E_{x}\bigg[\int_{0}^t \mathfrak L_N^{(1)} 
g(\widetilde X_{s}^{(1)}) ds\bigg]
&=-2N^2 \int_0^t\mathbf P_{x}(\widetilde X_{sN^2}\neq 1, N-1) ds
+2N^3\int_0^t\mathbf P_{x}(\widetilde X_{s}^{(1)}= 1) ds\\
&\quad+6N^3\int_0^t\mathbf P_{x}(\widetilde X_{s}^{(1)}= N-1) ds.
\end{split}
\end{equation*}
Hence, it follows that   
\begin{equation*}
\begin{split}
2N^3\int_0^t\mathbf P_{x}(\widetilde X_{s}^{(1)}= 1) ds
+6N^3\int_0^t\mathbf P_{x}(\widetilde X_{s}^{(1)}= N-1) ds
\le \max_{x,y\in\Lambda_{N-1}}
\big( g(x)-g(y)\big), 
\end{split}
\end{equation*}
which is bounded by $CN^2$ with some $C>0$, and thus the desired result follows. 
\end{proof}

\begin{lemma}
\label{lem:SRW_transition_prob_estimate}
Let $\{Y_t^{(1)}: t\ge0 \}$ be the simple symmetric random walk on $\mathbb T_N$ with jump rate $N^2$ to all directions, starting from $x\in\mathbb T_N$ and let $P_x$ be the associated probability measure.
There exists a constant $C>0$ such that 
\begin{equation*}
\max_{x,y\in\mathbb T_N} |\Delta_N^{(1)} P_x(Y_s^{(1)}=y)|
\le \frac{C}{Nt^{3/2}}
\end{equation*}
and 
\begin{equation*}
\max_{x,y\in\mathbb T_N} |\nabla^N P_x(Y_s^{(1)}=y)|
\le \frac{C}{Nt}. 
\end{equation*}
\end{lemma}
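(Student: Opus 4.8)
The plan is to proceed by a direct spectral (Fourier) computation of the transition kernel on the discrete torus, which reduces both bounds to elementary Gaussian-sum estimates. First I would record that, by translation invariance of the dynamics, $P_x(Y^{(1)}_t=y)$ depends on $x,y$ only through $z\equiv y-x\ (\mathrm{mod}\ N)$, and that as a function of this variable it solves the discrete heat equation $\partial_t(\cdot)=\Delta^{(1)}_N(\cdot)$ with initial datum $\mathbf 1_{z=0}$. Diagonalising $\Delta^{(1)}_N$ in the orthonormal Fourier basis $\psi_k(z)=e^{2\pi i k z/N}$, $k\in\{0,\dots,N-1\}$, with $\Delta^{(1)}_N\psi_k=-\lambda_k\psi_k$ and $\lambda_k=2N^2\big(1-\cos(2\pi k/N)\big)\ge 0$, one obtains the representation
\begin{equation*}
P_x(Y^{(1)}_t=y)=\frac1N\sum_{k=0}^{N-1}\psi_k(y-x)\,e^{-t\lambda_k},
\end{equation*}
which is justified at once by checking that the right-hand side solves the equation and that $\frac1N\sum_{k}\psi_k(z)=\mathbf 1_{z=0}$.

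Next I would apply the discrete operators termwise in the variable $y$. Since $\nabla^N\psi_k(\cdot)=N(\psi_k(1)-1)\psi_k(\cdot)$ and $\Delta^{(1)}_N\psi_k(\cdot)=-\lambda_k\psi_k(\cdot)$, the triangle inequality gives, uniformly in $x,y$,
\begin{equation*}
\big|\nabla^N P_x(Y^{(1)}_t=y)\big|\le\frac1N\sum_{k=0}^{N-1}N|\psi_k(1)-1|\,e^{-t\lambda_k},\qquad \big|\Delta^{(1)}_N P_x(Y^{(1)}_t=y)\big|\le\frac1N\sum_{k=0}^{N-1}\lambda_k\,e^{-t\lambda_k}.
\end{equation*}
It then remains to control the multipliers. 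Writing $\hat k$ for the representative of $k$ in $(-N/2,N/2]$, the elementary inequalities $\tfrac2\pi|\theta|\le|\sin\theta|\le|\theta|$ on $[-\pi/2,\pi/2]$ together with $|\sin(\pi k/N)|=|\sin(\pi(N-k)/N)|$ yield $c\,\hat k^2\le\lambda_k=4N^2\sin^2(\pi k/N)\le C\hat k^2$ and $N|\psi_k(1)-1|=2N|\sin(\pi k/N)|\le C|\hat k|$, with constants independent of $N$. Substituting these bounds and enlarging the sums over $k\in\{0,\dots,N-1\}$ to sums over $\hat k\in\mathbb Z$ reduces the two estimates to $\frac CN\sum_{\hat k\in\mathbb Z}|\hat k|\,e^{-ct\hat k^2}$ and $\frac CN\sum_{\hat k\in\mathbb Z}\hat k^2\,e^{-ct\hat k^2}$, respectively.

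Finally I would use the uniform-in-$t$ Gaussian sum bound $\sum_{\hat k\in\mathbb Z}|\hat k|^a\,e^{-ct\hat k^2}\le C_a\,t^{-(a+1)/2}$ valid for every $t>0$: for $0<t\le1$ this follows by comparing the sum with $\int_0^\infty(u+1)^a e^{-ctu^2}\,du$, while for $t\ge1$ one bounds it by $e^{-ct}\sum_{\hat k\ge1}\hat k^a e^{-c(\hat k^2-1)}=C_a e^{-ct}\le C_a' t^{-(a+1)/2}$. Taking $a=1$ gives $|\nabla^N P_x(Y^{(1)}_t=y)|\le C/(Nt)$ and $a=2$ gives $|\Delta^{(1)}_N P_x(Y^{(1)}_t=y)|\le C/(Nt^{3/2})$, which are precisely the claimed bounds. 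I do not expect a genuine obstacle; the only points requiring mild care are the two-sided comparison $\lambda_k\asymp\hat k^2$ uniformly in $N$ and splicing together the $t\lesssim1$ and $t\gtrsim1$ regimes in the Gaussian sum. Alternatively, one could quote a standard local central limit theorem / heat-kernel estimate on $\mathbb Z$ and transfer it to $\mathbb T_N$.
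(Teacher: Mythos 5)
Your proposal is correct and follows essentially the same route as the paper: diagonalise $\Delta^{(1)}_N$ in the discrete Fourier basis, write $P_x(Y^{(1)}_t=y)$ as $\tfrac1N\sum_k e^{-t\lambda_k}\psi_k(y-x)$, bound the multipliers by $|\hat k|$ and $\hat k^2$, and control the resulting Gaussian sums by comparison with an integral. If anything, your handling of the representative $\hat k\in(-N/2,N/2]$ and of the uniform-in-$t$ Gaussian-sum bound is slightly more careful than the paper's write-up.
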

\begin{proof}
Define $p_t^x(y)\coloneqq P_x(Y_t^{(1)}=y)$ to simplify the notation.  
Then, we have that $p_t^x(y)$ solves the following evolution equation:
\begin{equation}
\label{eq:evolution_RW}
\begin{cases}
\partial_tp_t^x(y)=N^2\big( p^x_t(y+1)+ p^x_t(y-1) -2p^x_t(y) \big) , \\
p_0^x(y)=\delta_{x,y}
\end{cases}
\end{equation}
where $\delta_{x,y}$ stands for the Kronecker delta. 
Let us firstly focus on the assertion for the Laplacian. 
By standard computations with the Fourier transform we have that for any $f\in \ell^2(\mathbb T_N)$, and for any $k\in\mathbb T_N$, 
\begin{equation*}
\widehat{\Delta_N^{(1)}f}(k)
=-\Lambda_N(k/N) \widehat{f}(k),
\end{equation*}
where
\begin{equation*}
\Lambda_N(k/N) = 4N^2\sin^2(\pi k/N). 
\end{equation*}
Hence, applying the Fourier transform in equation \eqref{eq:evolution_RW}, we obtain that
\begin{equation*}
\widehat{p_t^x}(k)
=\widehat{p^x_0}(k) e^{-\Lambda_N(k/N)t}
= e^{-2\pi i kx/N} e^{-\Lambda_N(k/N)t}.
\end{equation*}
By inverting the last display, we have that 
\begin{equation*}
p^x_t(y)
=\frac{1}{N} \sum_{k\in\mathbb T_N} e^{-\Lambda_N(k/N)t -2\pi ik(x-y)/N}. 
\end{equation*}
Hence, computing the one-dimensional Laplacian we obtain
\begin{equation*}
\begin{aligned}
\Delta^{(1)}_N p_t^x(y)
&=N\sum_{k\in\mathbb{T}_N} e^{-t\Lambda_N\big(\tfrac kN\big)-\tfrac{2\pi ik(x-y)}{N}} 
\Big( e^{\tfrac{2\pi ik}{N}}+e^{\tfrac{-2\pi ik}{N}}-2\Big)\\ 
&= -\frac{1}{N} \sum_{k\in\mathbb{T}_N} \Lambda_N\big( \tfrac kN)
e^{-t\Lambda_N\big(\tfrac kN\big)-\tfrac{2\pi ik(x-y)}{N}} .
\end{aligned}
\end{equation*}
Noting $x-x^3/6 \le \sin x\le x$ for any $x\in [0,\pi]$, we have that 
\begin{equation*}
\begin{aligned}
|\Delta^{(1)}_N p_t^x(y)|
&\le \frac{C}{N}\sum_{k\in\mathbb T _N} k^2e^{-k^2t}
\le \frac{C}{N} \bigg( e^{-t} + \int_1^t s^2e^{-s^2t}ds \bigg)
\end{aligned}
\end{equation*}
where $C>0$ is a universal constant independent of $N$. 
Here, since the integral in the utmost right-hand side of the last display is bounded by $Ct^{-3/2}$ with another universal constant $C>0$, we conclude that this integral gives the worst bound to conclude the proof for the first assertion.
The other assertion for the divergence is analogous. 
A crucial estimate for this case would be: 
\begin{equation*}
|\nabla^Np_t^x(y)|
\le \frac{C}{N}\sum_{k\in\mathbb T_N}|k|e^{-Ck^2t/N^2}
\le \frac{C}{N}\int_{-N/2}^{N/2}|u|e^{-Ctu^2}du
\le \frac{C}{Nt},
\end{equation*}
for some universal constant $C>0$, and thus we conclude the proof.  
\end{proof}

\subsection*{Acknowledgments}
P.G. expresses warm thanks to Fundação para a Ciência e Tecnologia FCT/Portugal for
financial support through the projects UIDB/04459/2020, UIDP/04459/2020 and ERC/FCT. J.P.M. thanks CNPq for the financial support through the PhD scholarship and FCT Portugal for the support through the project ERC/PT Super-diffusive.
K.H. thanks the hospitality of Instituto Superior T\'ecnico for his research visit during the period February-March 2025 when part of this work was developed. 

\bibliographystyle{abbrv}
\bibliography{ref} 

\end{document}